\title[Weak amenability is stable under graph products]{Weak Amenability is Stable Under Graph Products}
\date{\today}
\author{Eric Reckwerdt}
\begin{document}
\maketitle
\begin{abstract}
	Weak amenability of discrete groups was introduced by Haagerup and co-authors in the 1980's. 
	In particular, weak amenability in known to be stable under taking direct products and free products.
	In this paper we show that weak amenability (with Cowling-Haagerup constant 1) is stable under taking graph products of discrete groups.
	Along the way we will construct a wall space  associated to the word length structure of a graph product and also give a method of extending completely bounded functions on discrete groups to a completely bounded function on their graph product.
\end{abstract}

\section{Introduction}
Amenability has been widely studied since the early twentieth century, finding applications across many fields of mathematics. 
The idea was first studied by von Neumann and the term amenable was later coined by Day.
An amenable group is one that has an invariant mean; in other words there is a way to uniformly average a bounded function over the entire (infinite) group.
This can also be seen as the ability to approximate the group by finite subsets, as is shown by the existence of F\o lner sequences:
if a group is amenable then one can find a sequence of finite subsets such that the limit of averages over the sequence is a uniform average over the whole group.
Interest in other finite approximation properties like this have lead to a variety of generalizations of amenability, including a-T-menability (introduced by Gromov \cite{Gro88}, see \cite{ChCoVal01} for a useful reference) and weak amenability (studied by Haagerup and various co-authors \cite{DeCaHa85}, \cite{CowHa89}, \cite{Haag78}).

Weak amenability and a-T-menability are closely linked, as can be seen in the following characterization.
An approximate identity of a group $G$ is a sequence of functions $\{\f_n\}$  from $G$ to $\C$ such that $\f_n \to \mathbf{1}$ pointwise as $n$ goes to infinity.
A group is amenable if there is an approximate identity consisting of finitely supported positive definite functions,
a-T-menable if there is an approximate identity consisting of positive definite functions which vanish at infinity,
and is weakly amenable   if there is an approximate identity consisting of finitely supported functions which are uniformly completely bounded with cb-norm approaching one (we always assume with Cowling-Haagerup constant one).
Note that a positive definite function $\f$ is completely bounded  with cb-norm $\cb{\f}= \f(e)$, and if a group is amenable then it is both a-T-menable and weakly amenable.

Furthermore, there are many examples of groups which are non-amenable but both  weakly amenable and a-T-menable, such as 
free products of amenable groups with finite amalgam, Coxeter groups, and groups acting on CAT(0)-cube complexes.
These similarities led to the conjecture that the two properties were equivalent,
but it was shown that a-T-menability is stable under wreath products (De Cornulier, Stalder, and Valette \cite{dCSV12}) but weak amenability is not (Ozawa and Popa \cite{OzPo10}).
In this paper we study weak amenability.

It was first shown by deCanierre and Haagerup \cite{DeCaHa85} that the free group on $2$ generators, the classic non-amenable group, is weakly amenable.
This came from their study on the weak amenability of rank one Lie groups.  
Later, Bozejko and Picardello \cite{BoPi93} showed free products (with finite amalgam) of amenable groups are weakly amenable, following the treatment of Haagerup in his seminal paper \cite{Haag78} on the C$^*$-algebra of the free group.
In particular, they presented a cb-norm bound on the characteristic function of the n-sphere for a tree.
The stability of weak amenability under free products was finally proved by Ricard and Xu \cite{RiXu06} in 2006.
In 2008, Mizuta extended Bozejko and Picardello's technique to groups acting properly on finite dimensional CAT(0)-cube complexes, reproving a result of  Guentner and Higson \cite{GuHi10}.

A generalization of free products is the graph product:
given a simplicial graph $\Gamma$ and a set of discrete groups $\{G_v\}$ indexed by the vertices of $\Gamma$, there is the associated graph product $G(\Gamma)$, which is the free product of the $G_v$'s with the relation that for each edge $[vw]$ of $ \Gamma$,  elements of $G_v$ and $G_w$ commute with each other.
The word length structure of a graph product is a CAT(0)-cube complex, as we demonstrate in this paper. 
 In 2013, Antolin and Dreesen \cite{AnDr13} showed that a-T-menablity is stable under graph products.
	This, combined with Mizuta's and Ricard and Xu's results, strongly indicated that weak amenability should be stable under graph products as well.

In this paper we prove the following:
\begin{theorem*}
	The graph product over a finite graph of weakly amenable groups \textup{(}with Cowling-Haagerup constant 1\textup{)}  is weakly amenable.
\end{theorem*}
We begin with background on weak amenability and graph products.
Section 3 describes a wall space on which a given graph product acts.
From this, using the correspondence between CAT(0) cube complexes and wall spaces, we obtain a CAT(0) cube complex that describes the word length structure of the graph product.
In Section 4, we treat the extension of positive definite functions on groups to a positive definite function on their free product.
For this we use an infinite tensor product technique, similar to the one used by Chen, Dadarlat,  Guentner, and Yu \cite{GuDaChYu03} to show that free products with finite amalgam preserve uniform embeddability.
We then modify this construction to extend completely bounded functions to a free product.
This motivates the proof of the main theorem in Section 5, where we generalize the extension to graph products and construct a completely bounded approximate identity from such extensions.
These results contribute to my doctoral thesis at the University of Hawai\kern.05em`\kern.05em\relax i at \Manoa.

Finally, I would like to thank the referee for their comments and my advisor Erik Guentner for all of his help and support in my pursuit of mathematics.

\section{Background}
	\subsection{Operator norms and weak amenability}

	Completely bounded maps and the completely bounded norm come from a collection of natural matrix norms associated to any C$^*$-algebra.
	If $C$ is a C*-algebra, then $M_n(C)$, the $n\times n$ matrices with entries in $C$, is also a $C^*$-algebra which gives it an unique norm.
	Any (bounded, linear) operator $\f$ on $C$ extends to an operator $\f^n$ on $M_n(C)$, acting entrywise as $\f$. 
	An operator $\f$ is \textit{completely bounded} (\textit{cb}) if all of the $\f^n$'s are bounded uniformly.
	As the norms are increasing with $n$, we can define the \textit{cb-norm} of $\f$: $\cb{\f} := \lim_n\|\f^n\|$.
	The operator $\f$ is \textit{completely positive} if $\f^n$ is positive (as an operator on $M_n(C)$) for all $n$. 
	A completely positive operator $\f$ is completely bounded with $\cb{\f} = \f(\mathbf{1})$.
	For more details, see Paulsen's book \cite{Paul02}.

	Given a group $G$ and a map $\f: G\to \C$, we can naturally extend $\f$ to a multiplication operator on the group ring $\C G$, with $\sum a_g g \mapsto a_gf(g) g$. 
	If $\f$ is finitely supported, as is the case for functions in this paper, $\f$ can be extended to a multiplication operator on $C^\ast_r(G)$, the reduced C$^*$-algebra of $G$.	
	If $\f$ is completely bounded as a multiplication operator, then we say that $\f$, as a function, is completely bounded.

	Recall that a function $\f$ is \textit{positive definite} if and only if for all finite sets of complex numbers $\{c_i\}$ and group elements $\{g_i\}$, $\sum_{i,j}c_i\bar c_j \f(g_j^{-1}g_j)\ge 0$.
	This implies that, as a multiplication operator on $C^*_r(G)$, $\f$ is competely positive.
	Hence, a positive definite function $\f$ is completely bounded with $\cb{\f} = \f(\mathbf{e})$.  

	For most of the paper we shall use an alternate characterization of completely bounded functions, (see Theorem 8.3 and Remark 8.4 in \cite{Pisi03}):
	\begin{proposition}\label{GNS}
		If a function $\f: G\to \C$ is completely bounded then there is a Hilbert space $\scr{H}$ and maps $\alpha,\beta: G \to \scr{H}$ with $\|\alpha\|\|\beta\| = \cb{\f}$, such that $\f(h^{-1}g) = \langle \alpha(g),\beta(h)\rangle$ for all $g,h\in G$.
		Conversely, given a function of the form $\f(h^{-1}g) = \langle \alpha(g),\beta(h)\rangle$, then $\f$ is completely bounded with $\cb{\f}\le \|\alpha\|\|\beta\|$.

		If $\f$ is positive definite, then we may take $\alpha=\beta$ and $\|\alpha\|^2 = \f(e)$.
		Conversely, given a function of the form $\f(h^{-1}g) = \langle \alpha(g),\alpha(h)\rangle$, then $\f$ is positive definite.
Note that this paragraph is just the $GNS$-construction.
	\end{proposition}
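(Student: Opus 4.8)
\medskip
\noindent\emph{Proof proposal.} The statement falls into two halves: the completely bounded case, which I would derive from the theory of Schur multipliers, and the positive definite refinement, which is the classical GNS construction. In each half the two converse directions are elementary, so I would dispatch them first.

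Suppose $\f(h^{-1}g) = \langle\alpha(g),\beta(h)\rangle$ for maps $\alpha,\beta\colon G\to\scr{H}$, and set $\|\alpha\| = \sup_g\|\alpha(g)\|$ and $\|\beta\| = \sup_h\|\beta(h)\|$. Under the standard correspondence the multiplication operator $m_\f$ on $C^\ast_r(G)$ is the restriction to $\lambda(\C G)$ of the Schur multiplier on $B(\ell^2(G))$ with kernel $(g,h)\mapsto\f(h^{-1}g)$, and that kernel factors through the bounded operators $\delta_g\mapsto\delta_g\otimes\alpha(g)$ and $\delta_h\mapsto\delta_h\otimes\beta(h)$; the familiar Schur-multiplier estimate (see \cite{Paul02}) then gives $\cb{\f}\le\|\alpha\|\,\|\beta\|$. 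If moreover $\alpha=\beta$, then for all finite $\{c_i\}\subseteq\C$ and $\{g_i\}\subseteq G$,
\[
	\sum_{i,j}c_i\bar c_j\,\f(g_j^{-1}g_i) \;=\; \sum_{i,j}c_i\bar c_j\,\langle\alpha(g_i),\alpha(g_j)\rangle \;=\; \Bigl\|\sum_i c_i\,\alpha(g_i)\Bigr\|^2 \;\ge\; 0,
\]
so $\f$ is positive definite. This settles the two ``conversely'' assertions.

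For the forward direction of the first paragraph I would cite the characterization of completely bounded Herz--Schur multipliers (Theorem~8.3 and Remark~8.4 of \cite{Pisi03}): when $\cb{\f}<\infty$ the kernel $(g,h)\mapsto\f(h^{-1}g)$ is a completely bounded Schur multiplier of the same norm, and any such multiplier admits a representation $\f(h^{-1}g)=\langle\alpha(g),\beta(h)\rangle$ with $\|\alpha\|\,\|\beta\|\le\cb{\f}$; together with the converse inequality above this forces $\|\alpha\|\,\|\beta\|=\cb{\f}$. This is the one genuinely substantial ingredient --- it rests on a Hahn--Banach / Wittstock extension argument --- and the step I would rely on the literature for rather than reprove. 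It is worth recording why a more direct route stalls: applying the Wittstock--Haagerup--Paulsen factorization to $m_\f$ produces, for each $x\in G$, an identity $\f(x)=\langle\pi(\lambda(x))W\delta_e,\,V\delta_x\rangle$, but the substitution $x=h^{-1}g$ does not split into a factor depending only on $g$ and one depending only on $h$; passing to $B(\ell^2(G))$ repairs this precisely because its diagonal masa $\ell^\infty(G)$, over which the Schur multiplier is a bimodule map, supplies the room to perform that separation.

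For the forward direction of the second paragraph, suppose $\f$ is positive definite. Then the sesquilinear form on $\C G$ defined by $\langle\sum a_g g,\sum b_h h\rangle=\sum_{g,h}a_g\bar b_h\,\f(h^{-1}g)$ is positive semi-definite --- this is exactly the hypothesis --- so I would complete $\C G$ modulo its null space to a Hilbert space $\scr{H}$ and take $\alpha(g)$ to be the image of $g$. Then $\f(h^{-1}g)=\langle\alpha(g),\alpha(h)\rangle$ by construction and $\|\alpha(g)\|^2=\f(g^{-1}g)=\f(e)$ for every $g$, so $\alpha=\beta$ works with $\|\alpha\|^2=\f(e)$, consistent with $\cb{\f}=\f(e)$ for positive definite $\f$. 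This last part is, as the statement notes, nothing but the GNS construction.
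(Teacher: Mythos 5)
Your proposal is correct and takes the same route as the paper, which offers no proof of its own beyond citing Theorem 8.3 and Remark 8.4 of \cite{Pisi03} for the completely bounded characterization and remarking that the positive definite case is the GNS construction. You supply the elementary converse directions and the GNS argument explicitly and defer to the same literature for the one substantial step, so there is nothing to correct.
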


	Recall that a \textit{kernel} on a group is a map $\phi:G\times G \to \C$ and is \textit{$G$-invariant} if $\phi(kg,kh)=\phi(g,h)$ for any $g,h,k \in G$.
	For any function $\f:G\to \C$, there is an associated $G$-invariant kernel $\phi:G\times  G\to \C$, where $\phi(g,h) :=\f(h^{-1}g)$.
	The above proposition characterizes completely boundedness and positive definiteness in terms of the structure of the kernel of a group function.
	In this paper we often construct a kernel in the above form, then prove that it is $G$-invariant to show that the function $\f(h^{-1}g):=\phi(g,h)$ is well defined.

	A group $G$ is \textit{amenable} if there exists a net of finitely supported positive definite functions $\{\f_n\}$ such that $\f_n\to \mathbf{1}$ pointwise.
	A group $G$ is \textit{weakly amenable} if there exists a net of finitely supported completely bounded functions $\{\f_n\}$ such that $\f_n\to \mathbf{1}$ pointwise and $\limsup \cb{\f_n} \le c < \infty$.  
	The \textit{Cowling-Haagerup constant} $\Lambda_{cb}(G)$ is the smallest such $c$, and in this paper we shall always assume $c=1$.
	Note, the main construction relies on the fact that a completely bounded function of norm close to one is not very different from a positive definite function, and may not work with Cowling-Haagerup constant bounded away from one.

	\subsection{Tensor products of Hilbert spaces}	\label{hilbert}
	For most of sections 4 and 5 we use a tensor product of Hilbert spaces over an infinite index set.
	Let us recall how to construct such an object.
	Suppose $T$ is an infinite set and for every $t\in T$, we have a Hilbert space  $\scr{H}_t$ and an unit vector $\omega_t$ of $\scr{H}_t$ called the \textit{vacuum vector} of $H_t$.
	Given a finite subset $S \subset T$, we can construct the standard tensor product over $S$, denoted $\bigotimes_{t\in S} \scr{H}_t$.  
	We write an elementary tensor $\xi$ as $\xi_{t_1}\otimes \xi_{t_2} \otimes \cdots \otimes \xi_{t_n}$, where $t_i \in S$ and $\xi_{t_i}$ is the $t_i$ component of $\xi$.
	For another finite subset $R$ of $T$ such that $R\subset S$ there is an inclusion of $\bigotimes_{t\in R} \scr{H}_t$ into $\bigotimes_{t\in S}\scr{H}_t$ with $\bigotimes_{t\in R} \xi_t \mapsto \bigotimes_{t\in R} \xi_t \otimes \bigotimes_{t\in S\setminus R} \omega_t$.
	The inner product of any two finite tensors  $\langle \bigotimes_{t\in S}\xi_t,\bigotimes_{t\in S'} \xi_s\rangle$ can then be evaluted in $\bigotimes_{t\in S\cup S'}\scr{H}_t$.

	Define $ \scr{\widehat{H}} :=
	\bigotimes_{t \in T} \scr{H}_t$ 
	as the direct limit of tensor products over finite subsets of $T$.  
	The span of finite tensors is dense in $\scr{\widehat H}$ and in this paper we shall only be concerned with elementary finite tensors.
	It is useful to think of such tensors as infinite tensors with only finitely many components not equal to $\omega_t$.

	\subsection{Graphs and Graph Products}

	Graph products were first studied by E. Green in her thesis \cite{Green90} as a generalization of several familiar classes of groups including graph groups, free products, direct products, and right-angled Coxeter groups. 
	In this section we establish basic notation for graph products and recall several needed results.

	A \textit{graph} in this paper will be a \textit{finite simplicial graph}. 
	In other words a graph $\Gamma$ is a finite set of vertices and undirected edges between those vertices, without loops or multiple edges.
	A typical vertex of $\Gamma$ will be labeled as $v\in \Gamma$, and 
	the (unique) edge between vertices $v$ and $w$ of $\Gamma$ will be denoted $[vw]\in\Gamma$.

	Two vertices are \textit{adjacent} if they share an edge.
	Since $\Gamma$ is finite, there is a maximal number of mutually adjacent vertices, which we shall denote by $\kappa$ throughout this paper.
	The \textit{link} of a vertex $v$  is the set $\lk(v)$ of vertices which are adjacent to $v$ in $\Gamma$.
	The \textit{star} of a vertex $v$ is the set $\st(v)$ of vertices adjacent to $v$ and $v$ itself.

	Given a graph $\Gamma$ and set of groups $\{G_v\}$ labeling the vertices $v\in \Gamma$, called \textit{vertex groups}, we may define their graph product.
	Informally, their graph product is the free product of the vertex groups, where two vertex groups commute if their vertices share an edge in $\Gamma$.
	Formally, we have the following:
	the \textit{graph product of $\{G_v\}_{v\in \Gamma}$ over $\Gamma$} is
	$$G(\Gamma) := \ast_{v\in \Gamma} G_v/\langle\langle [G_v,G_w], [vw] \in \Gamma\rangle\rangle.$$
	Here, as usual, $[G_v,G_w] = \langle g_v^{-1}g_w^{-1}g_vg_w|$ $g_v\in G_v,$ $g_w\in G_w\rangle$ is the commutator of $G_v$ and $G_w$ and $\langle\langle H\rangle\rangle$ denotes the normal closure of a subgroup $H$ in $G$.

	In particular, a graph product over a totally disconnected graph (one with no edges) is a free product of the vertex groups.  A graph product over a complete graph (one in which every pair of vertices are adjacent) is a direct product of the vertex groups.
	Graph products thus act as combinatorial interpolations between the direct product and the free product of a set of groups.
	Further, a graph product is a right-angled Artin group if all of the vertex groups are infinite cyclic, and is a right-angled Coxeter group when the vertex groups are isomorphic to $\mathbb{Z}_2$.

	In a graph product $G(\Gamma)$, we denote by $G(\st(v))$ the subgroup generated by $\bigcup_{w\in \st(v)} G_w$.
	Likewise $G(\lk(v))$ is the subgroup generated by $\bigcup_{w\in \lk(v)} G_w$.
	Note that elements of $G(\lk(v))$ commute with elements of $G_v$.
	We denote by $G_v^e := G_v\setminus \{e\}$, the set of non-identity elements of $G_v$.

	Given a graph product $G(\Gamma)$, a \textit{formal word} in the vertex groups is a sequence of the form $(g_1,g_2,\dots, g_n)$, where $g_i \in G(i)$ (we write $G(i)$ for $G_{v_i}$ to avoid cluttered notation).
	We call each $g_i$ a \textit{syllable} and define the \textit{syllable length} of a formal word as the number of syllables in the word.

	Each formal word represents some element of $G(\Gamma)$ via multiplication. 
	For instance, the formal word $(g_1,g_2,\dots, g_n)$ represents $g_1g_2\dots g_n$. 
	There are three operations we can perform on this word that will not change the group element it represents.
	First, if $v_i$ and $v_{i+1}$ are adjacent vertices, then $g_i$ and $g_{i+1}$ commute and we may perform a \textit{syllable shuffle}: sending $(g_1,\dots, g_i,g_{i+1},\dots, g_n)$ to $(g_1,\dots, g_{i+1},g_i, \dots, g_n)$.
	If two adjacent syllables of a word are in the same vertex group, then as group elements the two syllables will multiply, and we may amalgamate the two syllables into a single syllable, reducing the length of the word by $1$.
	Last, if one of the syllables is the identity, we may remove it, also reducing the length of the word by $1$.
	A \textit{reduced word} is a word that cannot be made shorter through syllable shufflings and cancellation/amalgamation.  

	If $g \in G(\Gamma)$, we may write it as a product: $g=h_1h_2\dots h_m$, $h_i \in G(i)$.  
	We can then reduce the formal word $(h_1,\dots, h_m)$ to some reduced word $(g_1,\dots, g_n)$ such that $g = g_1g_2\dots g_n$.
	Green showed in her thesis that this process generates an unique family of reduced words representing $g$, and that any two reduced words representing $g$ differ from each other by a finite number of syllable shuffles.
	This implies  that the set of syllables representing $g$ is unique.
	We write $g\equiv g_1g_2\dots g_n$ when $(g_1,g_2,\dots,g_n)$ is a reduced word representing $g$ 
	and we define the \textit{reduced word length} $|g|_r$ as the syllable length of any (and all) of the reduced words representing $g$.

	\begin{remark}\label{remk}
		An important observation is that if $(g_1,\dots, g_n)$ is a reduced word with two syllables $g_i$ and $g_j$ from the same vertex group $G_v$, then there must be a $g_q$ with $i<q<j$ such that $g_q \not\in G(\st(v))$ (Equivalently, $v_q$ is not adjacent to $v$).
	Otherwise, as $g_i$ and $g_j$ commute with anything in $G(\st(v))$ by definition, we could shuffle $g_i$ next to $g_j$ and reduce the word further.
\end{remark}

Given two elements of $G(\Gamma)$ we will often need to know the explicit form of a reduced word representing their product. 
Recall that $\kappa$ is the maximal number of mutually adjacent vertices in $\Gamma$.

\begin{lemma}\label{hg}
	Suppose $g$ and $h$ are elements of $G(\Gamma)$, where $m=|h|_r$ and $n=|g|_r$.
	Then there exists natural numbers $q\le \min(m,n)$, $p \le \kappa$ and reduced words $g_1\dots g_n$ and $h_1\dots h_m$ representing $g$ and $h$ respectively, such that  $g_i=h_i$ for $1\le i \le q$, $g_i$ and $h_i$ are in the same vertex group for $q< i \le q+p$, and 
	$$h^{-1}g\equiv h^{-1}_m\dots h^{-1}_{q+p+1} (h^{-1}_{q+1}g_{q+1})\dots(h^{-1}_{q+p}g_{q+p})g_{q+p+1}\dots g_n.$$ 
	Furthermore, for each $1\le j \le p$, $h_{q+j}$ and $g_{q+j}$ are in $G(\st(q+i))$ for all $1\le i\le p$.

\end{lemma}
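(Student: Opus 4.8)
The plan is to peel syllables off the fronts of $g$ and $h$ until no further cancellation can be forced, and then to read off the reduced form of $h^{-1}g$ directly from the concatenation $h_m^{-1}\cdots h_1^{-1}g_1\cdots g_n$. The one tool needed is the reduction criterion behind Remark~\ref{remk}: a formal word $w_1\cdots w_k$ is non-reduced exactly when it has syllables $w_i,w_j$ with $i<j$ lying in a common vertex group $G_v$ and $w_\ell\in G(\st(v))$ for every $i<\ell<j$, in which case one may shuffle $w_i$ over to $w_j$ and amalgamate. The whole argument is an analysis of when this criterion applies to the above concatenation.

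First I would extract the \emph{common prefix}. Let $q$ be the largest integer for which there are reduced words $g\equiv g_1\cdots g_n$ and $h\equiv h_1\cdots h_m$ with $g_i=h_i$ for all $i\le q$; fix such words and set $c=g_1\cdots g_q$, $g'=c^{-1}g$, $h'=c^{-1}h$. Maximality of $q$ forces $g'$ and $h'$ to share no common first syllable, since prepending $c$ to such a syllable would produce reduced words for $g$ and $h$ agreeing in position $q+1$. As $c$ cancels in $h^{-1}g=h'^{-1}g'$, it suffices to analyze $h'^{-1}g'$; at the very end one takes as reduced words for $g$ and $h$ the word $c$ followed by whatever reduced words the analysis produces for $g'$ and $h'$.

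Next I would consider the set $K$ of vertices $v$ for which both $g'$ and $h'$ admit a reduced expression with first syllable in $G_v$; by the previous step the two such first syllables $a_v,b_v\in G_v^e$ are distinct. Remark~\ref{remk} shows that each of $g'$ and $h'$ has at most one first syllable in a given vertex group, and a short index chase shows that $K$ is a clique (so $|K|\le\kappa$) and that one may simultaneously shuffle all the $a_v$ to the front, and likewise all the $b_v$, writing $g'\equiv\bigl(\prod_{v\in K}a_v\bigr)g''$ and $h'\equiv\bigl(\prod_{v\in K}b_v\bigr)h''$ as reduced products in which neither $g''$ nor $h''$ has a first syllable in any $G_v$ with $v\in K$. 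Then $h^{-1}g=h'^{-1}g'=h''^{-1}\bigl(\prod_{v\in K}b_v^{-1}a_v\bigr)g''$, and with $p=|K|$ this is exactly the expression claimed in the lemma — provided this word is reduced.

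Verifying that $h''^{-1}\bigl(\prod_{v\in K}b_v^{-1}a_v\bigr)g''$ is reduced is the heart of the argument and the step I expect to be the main obstacle. The words for $h''^{-1}$, for $\prod_{v\in K}b_v^{-1}a_v$ (whose syllables lie in the distinct vertex groups of a clique), and for $g''$ are each reduced on their own, and no syllable of $g''$ or $h''$ lying in a $G_v$ with $v\in K$ can be shuffled past the intervening syllables to reach the middle block, since that would exhibit a first syllable of $g''$ or $h''$ in $G_v$. So by the reduction criterion the only remaining way the word could fail to be reduced is through a syllable of $h''^{-1}$ and a syllable of $g''$, both in some $G_u$, with every syllable between them — in particular all of $\prod_{v\in K}b_v^{-1}a_v$ — lying in $G(\st(u))$. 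But then $u$ is adjacent to every vertex of $K$, and shuffling those two $G_u$-syllables to the front exhibits $u$ as a common first vertex of $g'$ and $h'$; hence $u\in K$, contradicting that $g''$ has no first syllable in $G_u$. This rules out the bad case (and simultaneously covers the degenerate situations $q=0$ and $K=\emptyset$, where $c$ and/or the middle block are empty), so the word is reduced. Finally the ``furthermore'' clause is immediate: $h_{q+j},g_{q+j}\in G_{v_{q+j}}\subseteq G(\st(v_{q+i}))$ because any two vertices of the clique $K$ are adjacent and every vertex lies in its own star.
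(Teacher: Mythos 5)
Your proposal is correct and follows essentially the same route as the paper's proof: isolate the maximal common prefix (your $c$, the paper's $k\equiv g_1\dots g_q$), observe that the pairs of syllables which amalgamate must shuffle to the front of $k^{-1}g$ and $k^{-1}h$ and therefore form a clique of size at most $\kappa$, and read off the stated normal form. The only real difference is that you explicitly verify, via the normal-form criterion, that the resulting word $h''^{-1}\bigl(\prod_{v\in K}b_v^{-1}a_v\bigr)g''$ is reduced, a step the paper leaves implicit; this is a welcome addition rather than a divergence.
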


\begin{proof}
	Suppose $g$ and $h$ are elements of $G(\Gamma)$, $m=|h|_r$ and $n=|g|_r$.
	Let us find a reduced word representative of $h^{-1}g$ by concatenating the reduced words of $h^{-1}$ and $g$ and then reducing by syllable shuffling, cancelation, and amalgamation. 

	First, there is a maximal collection of syllables which cancel.
	Suppose $g_i$ cancels when reducing $h^{-1}g$ and there is a $g_j$ with $j<i$  which does not cancel.
If $G(i)=G(j)$, then from Remark \ref{remk}, there must be a $g_p\not\in G(\st(v))$ between $g_i$ and $g_j$. 
But $g_j$ blocks $g_p$ from cancelling, which in turn blocks $g_i$ from cancelling, a contradiction.
	So necessarily $G(i)\ne G(j)$. 
	Now if $g_j$ was not in $G(\lk(i))$ then $g_j$ would block $g_i$ from shuffling adjacent to its inverse and cancelling, a contradiction to our hypothesis.
Hence $g_j$ is in $G(\lk(i))$ and $g_i$ commutes with $g_j$.
	Hence each syllable which cancels must be able to shuffle in its word to a point where every syllable to its left will also cancel.
	From this we may assume, after rearranging, that there is a natural number  $0\le q \le \min(m,n)$ such that $g_i=h_i$ for all $i\le q$.
	In other words we assume $g$ and $h$ begin with the same reduced word $k\equiv g_1\dots g_{q}$, which may be the empty word.

	Next, some syllables may multiply non-trivially.
	Say $g_i\ne h_j$, but they are from the same vertex group $G_v$ and we are able to shuffle them next to each other when reducing $h^{-1}g$.	
	Then $g_i$ and $h_j$ must be able to shuffle to the beginning of the reduced words $k^{-1}g\equiv g_{q+1}\dots g_n$ and $k^{-1}h\equiv h_{q+1}\dots h_m$ respectively.
	If there is another such pair $g_r\ne h_s\in G_w$ which can be amalgamated, then either $g_i$ and $h_s$ or $g_r$ and $h_j$ must shuffle, implying that $G_v$ and $G_w$ commute.
	Hence all such syllables must mutually commute and there can be at most $\kappa$ pairs of syllables which multiply non-trivially (recall $\kappa$ is the maximal number of mutually adjacent vertices in $\Gamma$). 
	After shuffling, we may write $h^{-1}g$ as the product
	$$h^{-1}g\equiv h^{-1}_m\dots h^{-1}_{q+p+1} (h^{-1}_{q+1}g_{q+1})\dots(h^{-1}_{q+p}g_{q+p})g_{q+p+1}\dots g_n$$ for some $0\le q\le\min(m,n)$ and $p\le\kappa$. 
	To summarize, there are reduced words $g_1\dots g_n$ and $h_1\dots h_m$ representing $g$ and $h$ respectively such that $g_i=h_i$ for all $i\le q$ and the syllables $h_{q+j}$ and $g_{q+j}$  are in the same vertex group for $1\le j\le p$.
	Further, $h_{q+j}$ and $g_{q+j}$ are in $G(\st(q+i))$ for all $1\le i\le p$.
\end{proof}

We say $g$ \textit{ends with} $G_v$ if there is a reduced word representing $g$ such that the last syllable is in $G_v$.
	Likewise, $g$ \textit{begins with} $G_v$ if the first syllable of some reduced word representing $g$ is in $G_v$.
	Note that if $g \equiv g_1\dots g_v g_w$, and $g_v$ and $g_w$ commute ($v$ and $w$ are adjacent), then $g$ ends with both $G_v$ and $G_w$. 

	\begin{lemma}\label{initial}
	Suppose that $g,h\in G(\Gamma)$, $v\in \Gamma$, and $h$ does not end with $G(\lk(v))$.
       If $h^{-1}$ begins with $G_v$ but $h^{-1}g$ does not begin with $G_v$, then there is a reduced word $g_1\dots g_n$ representing $g$ such that $h\equiv g_1\dots g_m$ and $g_m\in G_v$ for some $m\le n$.
       Similarly, if $h^{-1}g$ begins with $G_v$ but $h^{-1}$ does not begin with $G_v$, then there is a reduced word $g_1\dots g_n$ representing $g$ such that $h\equiv g_1\dots g_m$ and $g_{m+1}\in G_v$ for some $m<n$.
\end{lemma}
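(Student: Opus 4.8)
The plan is to apply Lemma~\ref{hg} to the pair $g,h$, which produces reduced words $g\equiv g_1\cdots g_n$ and $h\equiv h_1\cdots h_m$, natural numbers $q\le\min(m,n)$ and $p\le\kappa$ with $q+p\le\min(m,n)$, and the reduced expression
$$h^{-1}g\equiv h^{-1}_m\cdots h^{-1}_{q+p+1}(h^{-1}_{q+1}g_{q+1})\cdots(h^{-1}_{q+p}g_{q+p})g_{q+p+1}\cdots g_n ,$$
where $g_i=h_i$ for $i\le q$, the vertices $v_{q+1},\dots,v_{q+p}$ are mutually adjacent, $g_{q+i}$ and $h_{q+i}$ lie in $G_{v_{q+i}}$, and each syllable $h^{-1}_{q+i}g_{q+i}$ is nontrivial since the displayed word is reduced. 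The conclusion will be read off by locating a syllable lying in $G_v$ inside this word.

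The first step is to record what the hypotheses force about the last syllable $h_m$. Because any two reduced words for $h$ differ only by shuffles and the set of syllables of $h$ is unique, if $h$ ends with $G_v$ and also with $G_w$ for some $w\ne v$, then $v$ and $w$ are adjacent. Combined with the hypothesis that $h$ does not end with $G(\lk(v))$, this yields a dichotomy. If $h^{-1}$ begins with $G_v$, equivalently $h$ ends with $G_v$, then every reduced word for $h$ --- in particular $h_1\cdots h_m$ --- has its last syllable in $G_v$, so $v_m=v$. If $h^{-1}$ does not begin with $G_v$, equivalently $h$ does not end with $G_v$, then no reduced word for $h$ ends with a syllable whose vertex lies in $\st(v)$, so $v_m\notin\st(v)$.

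Now I split on $q$ and $p$. If $q=m$, then necessarily $p=0$, so $h\equiv h_1\cdots h_m=g_1\cdots g_m$ and $h^{-1}g\equiv g_{m+1}\cdots g_n$, and the reduced word $g\equiv g_1\cdots g_n$ already displays $h$ as a prefix. For the first statement $g_m=h_m\in G_v$ is exactly the required conclusion. For the second statement, rewrite $h^{-1}g$ by a reduced word beginning with a syllable $a\in G_v^e$ and append it to $h_1\cdots h_m$; the result represents $g$ and has $|h|_r+|h^{-1}g|_r=|g|_r$ syllables, hence is reduced, it displays $h$ as a prefix followed by $a\in G_v$, and $m<n$ because $h^{-1}g\ne e$. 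It remains to rule out $q<m$. If $q+p<m$, the displayed word for $h^{-1}g$ begins with $h^{-1}_m$: in the first statement $v_m=v$, so $h^{-1}g$ begins with $G_v$, contradicting the hypothesis; in the second statement $v_m\notin\st(v)$, so $h^{-1}_m$ can neither be amalgamated nor shuffled past, no syllable of $G_v$ can be brought to the front, and $h^{-1}g$ does not begin with $G_v$, again a contradiction. If $q+p=m$ and $p\ge1$, the prefix $h^{-1}_m\cdots h^{-1}_{q+p+1}$ is empty, and the first $p$ syllables of the word are $h^{-1}_{q+1}g_{q+1},\dots,h^{-1}_m g_m$, lying in the groups of the mutually adjacent vertices $v_{q+1},\dots,v_m$. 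In the first statement $v_m=v$, so $h^{-1}_m g_m\in G_v^e$ commutes with the earlier of these syllables and can be shuffled to the front, so $h^{-1}g$ begins with $G_v$, a contradiction. In the second statement, if some $v_{q+r}=v$ then $h_{q+r}$ commutes with all later syllables of $h$ (their vertices are adjacent to $v$), so $h$ ends with $G_v$, contradicting the hypothesis; and if no $v_{q+r}$ equals $v$, then, since $h^{-1}g$ begins with $G_v$, some syllable lying in $G_v$ must be shuffled to the front of the displayed word, and it can only be one of $g_{q+p+1},\dots,g_n$; reaching the front requires commuting past each $h^{-1}_{q+i}g_{q+i}$, which forces $v_{q+1},\dots,v_m\in\lk(v)$, so $h_{q+1}\cdots h_m$ is a nonempty suffix of $h$ lying in $G(\lk(v))$ and $h$ ends with $G(\lk(v))$, a contradiction. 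Hence $q=m$ in both cases, and the conclusions above apply.

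The step I expect to be the main obstacle is the case $q+p=m$ with $p\ge1$: there one must combine the clique structure of $v_{q+1},\dots,v_m$ coming out of Lemma~\ref{hg}, the characterization of ``begins with $G_v$'' as the ability to shuffle some syllable of $G_v$ to the front of a reduced word, and the dichotomy for $h_m$, all while checking that the successive shuffles keep the word reduced. The remaining steps are essentially bookkeeping once the explicit reduced form of $h^{-1}g$ from Lemma~\ref{hg} is available.
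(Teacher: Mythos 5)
Your proof is correct and takes essentially the same route as the paper's: both apply Lemma~\ref{hg} and use the hypothesis that $h$ does not end with $G(\lk(v))$ to force $q=m$ and $p=0$, after which the conclusion is read off the prefix $g_1\dots g_m=h_1\dots h_m$. Your explicit case split on $q+p$ versus $m$ is if anything slightly more careful than the paper about the amalgamation case $q+p=m$, $p\ge 1$, which the paper's argument treats only implicitly.
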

\begin{proof}
Suppose $g,h \in G(\Gamma)$, $v\in \Gamma$, and $h$ does not end with $G(\lk(v))$.
By the previous lemma, we may find reduced words $g\equiv g_1\dots g_n$ and $h\equiv h_1\dots h_m$ such that $g_1\dots g_{q}=h_1\dots h_{q}$ and
$$h^{-1}g\equiv h^{-1}_m\dots h^{-1}_{q+p+1} (h^{-1}_{q+1}g_{q+1})\dots(h^{-1}_{q+p}g_{q+p})g_{q+p+1}\dots g_n$$ for some $q\le\min(m,n)$ and $p\le\kappa$.

Let us start with the first assertion and assume that
$h^{-1}$ begins with $G_v$ but $h^{-1}g$ does not begin with $G_v$.
To show that $h$ is an initial subword of $g$, we need to prove that $q=m$. 
In other words that all syllables of $h$ will cancel with syllables of $g$.
Since $h^{-1}$ begins with $G_v$ there is a syllable $h^{-1}_i\in G_v$ of $h^{-1}$ which can shuffle to the beginning of $h^{-1}$ and so $h_{j}^{-1}\in G(\lk(v))$ for all $i<j\le m$.  
This implies that $h$ ends with $G(\lk(v))$, a contradiction to our hypothesis, unless $i=m$.
Since $h^{-1}g$ does not start with $G_v$, $h_i^{-1}$ must cancel in $h^{-1}g$, so $m= i\le q\le \min(m,n)$.
Hence $q=m$, $p=0$, and $h\equiv g_1\dots g_q$.
Further, since $h$ ends with $G_v$, we can assume $i=q=m$ and thus have $g_m\in G_v$. 

For the second assertion, suppose $h^{-1}g$ begins with $G_v$ and $h^{-1}$ does not begin with $G_v$.
Again we wish to show that $q=m$, so suppose not and $q<m$.
Then $h_m^{-1}$ is either a syllable or part of an amalgamated syllable of $h^{-1}g$.
Since $h^{-1}g$ begins with $G_v$, there is some syllable $z\in G_v$ of $h^{-1}g$ which can shuffle to the beginning of $h^{-1}g$.  
If $z$ is $h_m^{-1}$, then $h^{-1}$ begins with $G_v$, a contradiction.
If not, then $z$ must shuffle past $h_m^{-1}$ and $h$ ends with $G(\lk(v))$,  a contradiction.
Hence $h_m^{-1}$ cannot be part of a syllable of $h^{-1}g$, so $q=m$, $p=0$, and $h=g_1\dots g_q$.
Also, since $h^{-1}g\equiv g_{q+1}\dots g_n$ and begins with $G_v$, we may assume $g_{m+1}\in G_v$.
\end{proof}

	The \textit{d-tail} of an element $g \in G(\Gamma)$ is the set of syllables which are in the last (rightmost) $d$ syllables of some reduced word representing $g$. 
	The $d$-tails of $g$ and $h$ contain all of the information in $h^{-1}g$ when $|h^{-1}g|_r=d$:
	\begin{lemma} \label{cancel}
			Suppose that $g$ and $h$ are elements of  $G(\Gamma)$ and $d=|h^{-1}g|_r$.
			Then each syllable of $h^{-1}g$ is either a syllable from the $d$-tail of $g$ or $h$, or an amalgamation of syllables from the $d$-tails of $g$ and $h$.
	\end{lemma}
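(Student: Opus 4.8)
\emph{Proof proposal.} The entire substantive content of this lemma is already packaged in Lemma \ref{hg}; the plan is to invoke it and then count syllables. First I would apply Lemma \ref{hg} to obtain reduced words $g\equiv g_1\cdots g_n$ and $h\equiv h_1\cdots h_m$, natural numbers $q\le\min(m,n)$ and $p\le\kappa$, and the displayed reduced expression
$$h^{-1}g\equiv h^{-1}_m\cdots h^{-1}_{q+p+1}(h^{-1}_{q+1}g_{q+1})\cdots(h^{-1}_{q+p}g_{q+p})\,g_{q+p+1}\cdots g_n.$$
Since $g_{q+1},\dots,g_{q+p}$ are genuine syllables of the reduced word $g_1\cdots g_n$ and $h_{q+1},\dots,h_{q+p}$ of the reduced word $h_1\cdots h_m$, we have $q+p\le\min(m,n)$. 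Counting the syllables in the display — namely $m-q-p$ of the form $h^{-1}_i$, then $p$ amalgamated syllables $h^{-1}_{q+j}g_{q+j}$, then $n-q-p$ of the form $g_i$ — gives $d=|h^{-1}g|_r=m+n-2q-p$.

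Next I would extract the two inequalities that make everything work. From $q+p\le n$ we get $m-q\le m+n-2q-p=d$, and from $q+p\le m$ we get $n-q\le d$. Consequently, in the reduced word $h_1\cdots h_m$ the $m-q$ syllables $h_{q+1},\dots,h_m$ all occupy positions among the rightmost $d$, so each of them belongs to the $d$-tail of $h$; symmetrically, each of $g_{q+1},\dots,g_n$ belongs to the $d$-tail of $g$.

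Finally I would read the syllables of $h^{-1}g$ off the display. The syllables $h^{-1}_m,\dots,h^{-1}_{q+p+1}$ are inverses of $h_{q+p+1},\dots,h_m$, each of which lies in the $d$-tail of $h$; the syllables $h^{-1}_{q+1}g_{q+1},\dots,h^{-1}_{q+p}g_{q+p}$ are each the product of the inverse of $h_{q+j}$, a syllable in the $d$-tail of $h$, with $g_{q+j}$, a syllable in the $d$-tail of $g$; and $g_{q+p+1},\dots,g_n$ each lie in the $d$-tail of $g$. Since these exhaust the syllables of $h^{-1}g$, each such syllable is of the asserted form.

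I do not expect a genuine obstacle here: all the real work — controlling the maximal cancellation, the non-trivial amalgamations, and the shuffles that relocate the surviving syllables to the ends of the respective reduced words — is already carried out in Lemma \ref{hg}. The only points demanding care are the bookkeeping, namely pinning down $q+p\le\min(m,n)$ and hence the exact value $d=m+n-2q-p$, and correctly interpreting ``a syllable of $h^{-1}g$ coming from $h$'' as the inverse of a syllable lying in the $d$-tail of $h$.
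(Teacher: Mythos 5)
Your proof is correct, but it takes a genuinely different route from the paper. You treat the lemma as a counting corollary of Lemma \ref{hg}: from the reduced normal form $h^{-1}g\equiv h^{-1}_m\cdots h^{-1}_{q+p+1}(h^{-1}_{q+1}g_{q+1})\cdots(h^{-1}_{q+p}g_{q+p})g_{q+p+1}\cdots g_n$ you read off $d=m+n-2q-p$, and the observation $q+p\le\min(m,n)$ (which indeed follows because $g_{q+p}$ and $h_{q+p}$ are syllables of the respective reduced words) gives $m-q\le d$ and $n-q\le d$, placing every surviving or amalgamated syllable in the relevant $d$-tails. The paper instead argues directly and in contrapositive form: for a syllable $g_i$ of $g$ \emph{outside} the $d$-tail, it considers the set $S$ of syllables of $g$ that cannot shuffle to the left of $g_i$, notes $|S|\ge d$, and derives a contradiction from the assumption that $g_i$ survives, concluding that everything outside the $d$-tail must cancel. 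Your version buys brevity and makes the dependence on the normal form explicit (it also yields the exact formula for $d$ as a byproduct), while the paper's version is self-contained at the level of shuffling arguments and does not rely on the precise bookkeeping of $q$ and $p$. The one point you rightly flag --- that a ``syllable from the $d$-tail of $h$'' appearing in $h^{-1}g$ really means the inverse of such a syllable --- is handled the same way implicitly in the paper, so there is no gap.
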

	\begin{proof}
		Let $g$ and $h$ be elements of $G(\Gamma)$ and define $d:=|h^{-1}g|_r$.
		If all the syllables of $g$ and $h$ are in their respective $d$-tails, then the lemma is trivially proved.
	       As the argument is symmetric in $g$ and $h$, we may assume that $g$ has syllables outside its $d$-tail.
		We now show that all syllables of $g$ outside the $d$-tail of $g$ must cancel with a syllable of $h$ in the product $h^{-1}g$.

		Let $g\equiv g_1\dots g_n$, and $g_i$ be a syllable not in the $d$-tail of $g$.
		Let $S$ be the set of syllables to the right of $g_i$ in $g$ that cannot be shuffled to the left of $g_i$.
		Note that $S$ is independent of the reduced word representing $g$, and by hypothesis there are at least $d$ syllables in $S$.	
		Hence after shuffling and relabeling we may assume $g_j\in S$ for all $j>i$ and $i<n-d$.

		Now suppose $g_i$ does not cancel with a syllable of $h$ in the reduced word representing $h^{-1}g$. 
		Then at least one of the syllables $g_j$ in $S$ must cancel with a syllable $h_p$ of $h$, otherwise $|h^{-1}g|_r$ would be greater than $d$.
		This implies that $h_p$ must shuffle past $g_i\dots g_{j-1}$, which happens if and only if $g_j$ can shuffle past $g_i\dots g_{j-1}$.
		In particular, $g_j$ must shuffle to the left of $g_i$, but this is a contradiction, as $g_j$ was supposed to be in $S$.
		Thus $g_i$ must cancel with some syllable of $h$ when reducing $h^{-1}g$.
	\end{proof}

	In the case of a free product, the $d$-tail of $g$ is just the last $d$ syllables of $g$.
	For general graph products there may be more than $d$ syllables in the $d$-tail and we have the following bound on its size:
	\begin{lemma} \label{dtailsize}
		Let $\kappa$ be the maximal number of mutually adjacent vertices in $\Gamma$.
		Then the maximum number of syllables in the $d$-tail of any group element is $d\kappa$.
	\end{lemma}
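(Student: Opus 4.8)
The plan is to translate the statement into the language of the \emph{dependence order} on syllables. Fix $g\in G(\Gamma)$, and recall that its set of syllables is well defined (Green). For distinct syllables $s,t$ of $g$ write $s\prec t$ if $s$ precedes $t$ in every reduced word representing $g$; equivalently, $t$ cannot be shuffled to the left of $s$, so this relation does not depend on the chosen reduced word. Since any two reduced words differ by syllable shuffles, the positions of the syllables in any single reduced word give a common linear refinement of $\prec$, and from this one reads off immediately that $\prec$ is a strict partial order (in particular transitive). For a syllable $s$ put $N(s):=\{t: s\prec t\}$, the set of syllables that must lie to the right of $s$.

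The first step is the observation --- already implicit in the proof of Lemma~\ref{cancel} --- that $s$ lies in the $d$-tail of $g$ if and only if $|N(s)|\le d-1$. Indeed, in every reduced word the syllables of $N(s)$ lie to the right of $s$, so $s$ never occurs among the last $|N(s)|$ syllables; conversely, arguing as in the proof of Lemma~\ref{cancel} one can shuffle $s$ into position $|g|_r-|N(s)|$, i.e.\ among the last $|N(s)|+1$ syllables. Consequently the $d$-tail of $g$ is the disjoint union $\bigsqcup_{j=1}^{d}B_j$, where $B_j:=\{s:|N(s)|=j-1\}$, and it suffices to prove $|B_j|\le\kappa$ for every $j$; summing then gives the bound $d\kappa$.

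To bound $|B_j|$ I would first check that $B_j$ is an antichain for $\prec$: if $s,t\in B_j$ with $s\prec t$, then transitivity gives $N(t)\cup\{t\}\subseteq N(s)$, whence $|N(s)|\ge|N(t)|+1$, contradicting $|N(s)|=|N(t)|=j-1$. Next I claim any two incomparable syllables $s,t$ of $g$ come from distinct, adjacent vertices of $\Gamma$. They are adjacent: choosing a reduced word with $t$ before $s$, a reduced word with $s$ before $t$, and a sequence of shuffles between them, some shuffle must interchange $s$ and $t$ while they are adjacent in the word, so their vertices share an edge. They are distinct: by Remark~\ref{remk} two syllables of $g$ lying in a common vertex group $G_v$ always have a syllable between them outside $G(\st(v))$, hence can never be made adjacent in a reduced word, hence have a relative order invariant under shuffles, i.e.\ are comparable. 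Therefore the vertices underlying $B_j$ form a clique in $\Gamma$ carrying at most one syllable each, so $|B_j|\le\kappa$, and the lemma follows.

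The substance of the proof is entirely in the two structural facts about $\prec$ --- transitivity, and ``incomparable implies distinct adjacent vertices'' --- which must be extracted carefully from Green's shuffle theorem and Remark~\ref{remk}; this bookkeeping is the only real obstacle, and once it is in place the counting is immediate. For the applications that follow, only the upper bound $\le d\kappa$ on the size of the $d$-tail is needed.
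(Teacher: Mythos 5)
Your proof is correct and is essentially the paper's argument in different clothing: your level sets $B_j=\{s:|N(s)|=j-1\}$ are exactly the successive differences ($j$-tail)$\setminus$(($j{-}1$)-tail) that the paper peels off by induction on $d$, and in both arguments each layer is bounded by $\kappa$ because any two of its syllables must lie in distinct, adjacent vertex groups. The only stylistic difference is that you package the shuffle combinatorics as a dependence order and an antichain bound, while the paper phrases the same facts in terms of shuffling a syllable to the extremal position of the layer.
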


	\begin{proof}
		Let $g\in G(\Gamma)$ and $n=|g|_r$.
		We prove this by induction on $1<d\le n$.
		For the base case, the $1$-tail of $g$ is just the set of final syllables of  $g$, which must all commute with each other.
		Hence the largest the $1$-tail could be is $\kappa$.

		Now let $d$ be an natural number between 1 and $n$ and assume for induction that the $(d-1)$-tail of $g$ has cardinality at most $(d-1)\kappa$.
		Split the $d$-tail of $g$ into two parts:
		the $(d-1)$-tail of $g$ and the $d$-tail$\setminus$ $(d-1)$-tail of $g$.
		
		Suppose $g_i$ is in the $d$-tail$\setminus$ $(d-1)$-tail of $g$.
		Then we may find a reduced word representing $g$ in which $g_i$ is the $(n-d)^\text{th}$ syllable, and cannot be shuffled further towards the end of the word.
		If $g_j$ is any other syllable in the $d$-tail$\setminus$ $(d-1)$-tail of $g$, then $g_j$ is somewhere to the left of $g_i$ and would have to shuffle past $g_i$ in order to rewrite the word with $g_j$ as the  $(n-d)^\text{th}$ syllable. 
		Hence $g_j$  must be in $G(St(i))$.
		This is true of any syllable in the $d$-tail$\setminus$ $(d-1)$-tail, so all such syllables commute, and there can be at most $\kappa$ of them.
		Combining this with the induction hypothesis implies that the cardinality of the $d$-tail of any group element is at most $d\kappa$.
	\end{proof}

\section{The wall space}
For the main construction we need bounds on the characteristic function of words with reduced word length $d$ in a graph product.
To do this we find a metric space that reflects the intergroup geometry of the vertex groups of a graph product.
We then use properties of this metric space to supply us with bounds.
For a free product of groups, this space is a tree.
For a direct product of $n$ groups, it is an $n$-dimensional cube, as every word has at most $n$ syllables. 
For a general graph product this intergroup geometry is a CAT(0) cube complex---cubes where groups commute and trees where they do not.

A \textit{CAT(0) cube complex} is a metric space of non-positive curvature made by gluing cubes of various dimensions together according to a few constraints.
CAT(0) cube complexes are known for their nice combinatorial properties
and are intimately related to median algebras and partially ordered with complement (poc) sets.
In this paper, we shall realize our CAT(0) cube complex by constructing its associated wall space, a geometric poc set.
For a detailed treatment of CAT(0) cube complexes, see Bridson and Haefliger \cite{BriHae99}. 
For their connection to median algebras, poc sets, and wall spaces, see Roller \cite{Roller98}  or Chatterji and Niblo \cite{ChNi05}.

First, some background:
a \textit{wall} on a set $X$ is a partition of $X$ into a half-space $w$ and its complement $w^c$.
We will refer to a wall $(w,w^c)$ by its preferred half-space $w$.
A wall $w$  \textit{separates} two points in $X$ if one point lies in the half-space $w$ and the other in $w^c$.
A \textit{wall space} is a space $X$ and a collection $W$ of walls on $X$ such that, for any $x,y\in X$, finitely many walls in $W$ separate $x$ and $y$. 
The \textit{wall distance} $d_W(x,y)$ is defined as the number of walls separating $x$ and $y$.
Two walls $w$ and $u$ \textit{cross} (symbolically $w\perp u$) if their four possible intersections $w\cap u$, $w^c \cap u$, $w \cap u^c$, and $w^c \cap u^c$ are all nonempty.
Obviously containment of one half-space in another ($w \subset u^c$ or $w \subset u$) implies the two walls do not cross, and we call the walls \textit{parallel} ($w\parallel u$).

Chatterji and Niblo \cite{ChNi05} and Nica \cite{Nica04}, both based on a construction of Sageev \cite{Sageev95}, have demonstrated that every wall space generates a cube complex.  
Let us paraphrase the result we need (Theorem 3 from \cite{ChNi05}):

\begin{theorem}\label{chat}
	Let $G$ be a discrete group acting on a space with walls $Y$ and let $W$ denote the set of walls.
	Then there exists an action of $G$ on a CAT(0) cube complex $X$ such that for each $y\in Y$, there exists an $x \in X$ such that $d_X(gx,x) = d_W(gy,y)$.
	If the maximum number of mutually crossing walls in $W$ is finite, then $X$ is finite dimensional.
\end{theorem}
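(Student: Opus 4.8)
The plan is to reproduce Sageev's construction of the cube complex dual to a wall space, in the form given by Nica and by Chatterji--Niblo, and then to read off the required properties. First I would fix the vertex set. Call an \emph{orientation} a choice function $\sigma$ assigning to each wall $w\in W$ one of its two half-spaces, and write $\sigma\models w$ when $\sigma$ selects $w$ itself. For $y\in Y$ let $\sigma_y$ be the \emph{principal} orientation that selects, for every wall, the half-space containing $y$. Declare $\sigma$ \emph{admissible} if it is \emph{consistent} (no two selected half-spaces are disjoint) and \emph{almost principal} (for some, equivalently any, $y$ the orientations $\sigma$ and $\sigma_y$ disagree on only finitely many walls). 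Let the vertices of $X$ be the admissible orientations, join $\sigma,\tau$ by an edge when they disagree on exactly one wall, and glue in an $n$-cube for every family of $2^n$ admissible orientations that agree off a fixed $n$-element set of walls and realize all $2^n$ sign patterns on that set. Each $\sigma_y$ is admissible, so $X$ is nonempty, and all principal orientations lie in a single connected component: since $d_W(y,y')<\infty$, flipping the finitely many walls separating $y$ from $y'$ one at a time produces an edge-path from $\sigma_y$ to $\sigma_{y'}$. I would replace $X$ by that component.

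Next, the distance formula. Taking $d_X$ to be the path metric on the $1$-skeleton, the distance between two admissible orientations equals the number of walls on which they disagree: each edge changes exactly one wall, so this number is a lower bound, and the flipping path above attains it once one checks that no wall is flipped and later unflipped — flip the separating walls in an order compatible with inclusion of half-spaces and use consistency of the intermediate orientations. Hence $d_X(\sigma_y,\sigma_{y'})$ is the number of walls separating $y$ from $y'$, which is $d_W(y,y')$ by definition.

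For the CAT(0) property I would invoke Gromov's criterion: a cube complex is CAT(0) iff it is simply connected and the link of every vertex is a flag simplicial complex. The flag condition is pure combinatorics of orientations: $k$ walls that pairwise span a square at $\sigma$ must pairwise cross, and conversely $k$ pairwise crossing walls can be flipped independently at $\sigma$ — consistency of each of the $2^k$ sign patterns is automatic from pairwise non-disjointness of half-spaces — so they span a $k$-cube. Simple connectivity is the substantive point: one homotopes an arbitrary edge-loop across squares to a constant loop by an induction that removes "backtracks", i.e. a wall flipped and then unflipped along the loop, using the square-fillings exactly as in Sageev, Nica, and Chatterji--Niblo. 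This is the step I expect to be the main obstacle to a self-contained argument, and I would cite their proof rather than reproduce it.

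Finally, the action and the dimension bound. An action of $G$ on $(Y,W)$ permutes half-spaces, hence acts on orientations; it carries admissible orientations to admissible ones — the identity $g\sigma_y=\sigma_{gy}$ takes care of the almost-principal condition — and it preserves adjacency and cubes, so $G$ acts on $X$ by cubical isometries preserving the ($G$-invariant) component of the principal orientations. Taking $x=\sigma_y$ then gives $d_X(gx,x)=d_X(\sigma_{gy},\sigma_y)=d_W(gy,y)$ for every $g\in G$, as required. For dimension, an $n$-cube at $\sigma$ exhibits $n$ walls all $2^n$ of whose sign patterns are consistent, which forces each of the four intersections of every pair to be nonempty; that is, the $n$ walls mutually cross. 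Therefore $\dim X$ is at most the maximal number of mutually crossing walls in $W$, so $X$ is finite dimensional whenever that number is finite.
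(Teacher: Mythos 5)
The paper does not prove this statement at all: it is quoted as a paraphrase of Theorem 3 of Chatterji--Niblo (with Nica and Sageev also cited), so there is no in-paper argument to compare against. Your proposal is a correct outline of exactly the construction those references use --- admissible (consistent, almost-principal) orientations as vertices, edges for single-wall disagreement, cube fillings, Gromov's link criterion, and the dimension bound via mutually crossing walls --- and you rightly flag simple connectivity as the step that must be deferred to the sources rather than reproduced. In short, you have sketched the standard proof that the paper itself elects to cite.
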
 

We now construct a set of walls on $G(\Gamma)$ such that the wall distance $d_\mathbb{W}(g,h)$ is proportional to the reduced word length $|h^{-1}g|_r$.
The CAT(0) cube complex generated by this wallspace is a generalization of Ruane and Witzel's cube complex for graph products of finitely generated abelian groups \cite{RuWi13}. 
It is also a natural extension of the cube complex known for Coxeter groups as found in Davis \cite{Davi08}. 
Further connections between CAT(0) cube complexes and graph products can be found in Davis \cite{Davi94}, Meier \cite{Meier96}, or Januszkiewicz and Swiatkowski \cite{JanSwi01}.  
For a completely disconnected graph, this cube complex is (the rooted) barycentrically subdivided Bass-Serre tree of the free product of the vertex groups.

For each $v\in \Gamma$, define 
$$\hw{v} := \{g\in G(\Gamma)\;|\; \exists g_v \in G_v \text{ such that } |g_vg|_r < |g|_r\}.$$ 
This is the set of elements of $G(\Gamma)$ which begin with $G_v$:  
if $g\in\hw{v}$ then there exists a $g_v \in G_v$ such that $|g_vg|_r<|g|_r$ and $g_v$ must cancel with a syllable of $g$ in any reduced word representing $g_vg$.
This means $g_{v}^{-1}$ is a syllable of $g$ and can be shuffled to the beginning of $g$, hence $g$ begins with $G_v$.
On other hand, if $g\equiv g_1\dots g_n$, and $g_1 \in G_v$, then $|g_1^{-1}g|_r = |g_2\dots g_n|_r < |g|_r$ and $g$ is in $\hw{v}$.
Similarly $\hwc{v}$, the complement of $\hw{v}$, is the set of elements of $G(\Gamma)$ that do not begin with $G_v$: 
$$\mathbb{A}_v^c:=  \{g \in G(\Gamma)\;|\; \forall g_v \in G_v,\quad  |g_vg|_r \ge |g|_r\}.$$ 
The pair $(\hw{v},\hwc{v})$ form a wall on $G(\Gamma)$.
Further, using the natural left multiplication action of $G(\Gamma)$ on subsets of $G(\Gamma)$, we have for each $k \in G(\Gamma)$ and $v \in \Gamma$ a wall $(k\hw{v},(k\hw{v})^c)$ whose half-spaces are
$$k\mathbb{A}_v = \{kg\in G(\Gamma)\;|\; \exists g_v \in G_v\text{, } |g_vg|_r < |g|_r\} = \{g\in G(\Gamma)\;|\; \exists g_v \in G_v\text{, } |g_vk^{-1}g|_r < |k^{-1}g|_r\}$$
and 
$$(k\hw{v})^c = k\hwc{v} = \{g\in G(\Gamma)\;|\; \forall g_v \in G_v\text{, } |g_vk^{-1}g|_r \ge |k^{-1}g|_r\}.$$
Our set of walls $\mathbb{W}$ is the collection of all walls of this form.  
I.e., $$\mathbb{W} := \{k\mathbb{A}_v \text{, } \forall k\in G(\Gamma)\text{, }\forall v\in \Gamma\}.$$

Note that we unambiguously refer to the wall $(k\hw{v},k\hwc{v})$ by the preferred half-space $k\hw{v}$. 
This is because the action of $G(\Gamma)$ on $\mathbb{W}$ sends preferred half spaces to preferred half spaces:
\begin{lemma}For any $k\in G(\Gamma)$, and $v, w \in \Gamma$, $k\hw{v} \ne \hwc{w}$.
\end{lemma}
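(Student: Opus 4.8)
The plan is to argue by contradiction: suppose $k\hw{v}=\hwc{w}$. The half-space $k\hw{v}$ is the set of elements $kg$ for which $g$ begins with $G_v$, so it ``opens away from $k$'', while $\hwc{w}$ is the set of elements that do \emph{not} begin with $G_w$; in particular $e\in\hwc{w}$, and $k\notin\hwc{w}$ is equivalent to $k$ beginning with $G_w$. So the first step is to test the supposed equality at the two natural points $e$ and $k$. From $e\in\hwc{w}=k\hw{v}$ we get $k^{-1}\in\hw{v}$, i.e.\ $k^{-1}$ begins with $G_v$, i.e.\ $k$ ends with $G_v$. From $e\notin\hw{v}$ we get $k\notin k\hw{v}=\hwc{w}$, so $k$ begins with $G_w$; in particular $k\neq e$, and if $v\neq w$ then $|k|_r\geq2$, since a single syllable cannot lie in both $G_v$ and $G_w$.

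The second step perturbs $k$ on the right by an element of $G_v$. For any $g_v\in G_v^e$ we have $k^{-1}(kg_v)=g_v$, which begins with $G_v$, so $kg_v\in k\hw{v}=\hwc{w}$; hence $kg_v$ does not begin with $G_w$ — and I will contradict this. By Remark~\ref{remk} any two distinct $G_v$-syllables of $k$ are separated, in every reduced word, by a syllable outside $G(\st(v))$, so the $G_v$-syllables of $k$ occur in a fixed relative order; let $s$ be the last one, and let $t$ be a syllable witnessing that $k$ begins with $G_w$. Since $k$ ends with $G_v$, the syllable $s$ can be shuffled to the end of a reduced word, and $t$ can be shuffled to the front. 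If $t\neq s$, then (using that reduced words of $k$ differ only by syllable shuffles) $k$ has a reduced word of the form $t\cdots s$, and taking $g_v=s^{-1}$ yields a reduced word $kg_v\equiv t\cdots$ beginning with $t$, so $kg_v$ begins with $G_w$ — a contradiction. So $t=s$; then $s\in G_v\cap G_w$, forcing $v=w$, and a syllable shuffleable both to the front and to the end commutes with every other syllable, so $k\equiv s\,k'$ with $k'\in G(\lk(v))$. In that case choose $h_v\in G_v^e$ with $h_v\neq s^{-1}$; then $kh_v\equiv(s h_v)\,k'$ begins with $G_v=G_w$, while $kh_v\in k\hw{v}=\hwc{w}$ forces it not to — the final contradiction.

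The step carrying the weight is the last one: it needs a nontrivial element of $G_v$ other than $s^{-1}$, i.e.\ $|G_v|\geq 3$. This is not cosmetic: when $G_v\cong\mathbb{Z}_2$ one has in fact $k\hw{v}=\hwc{v}$ for every $k\in G_v^e\cdot G(\lk(v))$, the nontrivial element of such a $G_v$ fixing the wall $\hw{v}$ as a set and interchanging its half-spaces, exactly as a reflection fixes a wall in a Coxeter complex. So in the write-up one should either assume all vertex groups have order at least three, or single out the $\mathbb{Z}_2$-vertices and fix once and for all an arbitrary orientation for each of the finitely many $G(\Gamma)$-orbits of walls they contribute. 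The only other point needing care is the combinatorial input used in the second step — that the $G_v$-syllables of a fixed element are linearly ordered, so that ``begins with $G_v$'' and ``ends with $G_v$'' are governed by a unique first and a unique last $G_v$-syllable — which follows from Remark~\ref{remk} together with the syllable-shuffle description of reduced words.
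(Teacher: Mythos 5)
Your argument is correct wherever the lemma is actually true, and it takes a genuinely different route from the paper's. The paper splits on whether $k$ ends with $G_v$: if not, it observes $e\in\hwc{w}\setminus k\hw{v}$; if so, it exhibits an explicit element of $k\hw{v}\setminus\hwc{w}$, namely a nontrivial $g_w\in G_w$ ``which is not a syllable of $k$,'' so that $k^{-1}g_w$ still begins with $G_v$ while $g_w$ begins with $G_w$. You instead assume the equality and test it at $e$, at $k$, and at the right-perturbations $kg_v$, which forces $k$ to begin with $G_w$, end with $G_v$, and yet have no $kg_v$ beginning with $G_w$; your shuffle analysis of the witnessing syllables $t$ and $s$ (including the observation that a syllable movable to both ends commutes with everything else, so $k\equiv s k'$ with $k'\in G(\lk(v))$) then yields the contradiction. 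Yours is longer, but it localizes exactly where an obstruction can occur, which the paper's choice of a ``generic'' $g_w$ hides.

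That localization matters, because your $\mathbb{Z}_2$ caveat is not a defect of your write-up: it is a genuine counterexample to the lemma as stated, and it pinpoints the gap in the paper's own proof. If $G_v=\{e,s\}\cong\mathbb{Z}_2$ and $k=sk'$ with $k'\in G(\lk(v))$, then $k\hw{v}=s\hw{v}=\hwc{v}$, since $sg$ begins with $G_v$ exactly when $g$ does not. The paper's proof silently assumes that a nontrivial $g_w\in G_w$ exists that is not a syllable of $k$; when $G_w\cong\mathbb{Z}_2$ and its generator occurs in $k$ there is no such element, and in the sub-case $v=w$, $k\in G_v^e\cdot G(\lk(v))$ the conclusion really does fail. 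Either of your repairs (assume every vertex group has order at least three, or fix once and for all an orientation on each $G(\Gamma)$-orbit of walls attached to $\mathbb{Z}_2$-vertices) is appropriate. One should also flag the downstream effect: in the proof that $d_\mathbb{W}(e,g)=2|g|_r$, the two half-spaces $g_1\dots g_{i-1}\hw{v}$ and $g_1\dots g_i\hw{v}$ determine the \emph{same} partition when $G_v\cong\mathbb{Z}_2$, so each such syllable contributes one wall rather than two, and the identity $\chi_d(h^{-1}g)=\Xi_{2d}(gx_0,hx_0)$ used in Section 5 needs the corresponding adjustment (e.g.\ by formally doubling those walls).
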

\begin{proof}
	Suppose $k\equiv k_1\dots k_n$ is an element of $G(\Gamma)$ and $v$ and $w$ are vertices of $\Gamma$.
	First suppose that $k$ ends with $G_v$, so without loss of generality $k_n\in G_v$. 
	Then, for any $g_w \in G_w$ which is not a syllable of $k$, we have $|k_nk^{-1}g_w|_r=|k^{-1}_{n-1}\dots k^{-1}_1 g_w|_r < |k^{-1}g_w|_r$, and hence $g_w \in k\hw{v}$.
	But $g_w$ is not in $\hwc{w}$, since $g_w$ starts with $G_w$.
	Hence $k\hw{v} \not\subset \hwc{w}$.

	Now suppose that $k$ does not end with $G_v$. 
	Then $e$ is in $\hwc{w}$ but not in $k\hw{v}$. 

	In either case we have that $k\hw{v} \ne \hwc{w}$ and the lemma is proved.
\end{proof}

The following proposition describes some of the structure of $\mathbb{W}$.

\begin{proposition}\label{facts} For $\mathbb{W}$ as above, $k \in G(\Gamma)$, and $v, w \in \Gamma$, we have the following:
	\begin{enumerate}
		\item $k\hw{v}$ and $h\hw{w}$ cross if and only if $h^{-1}k\hw{v}$ and $\hw{w}$ cross.
		\item Suppose $v$ and $w$ are adjacent.
			Then $k\hw{v}$ and $k\hw{w}$ cross and for all $h_w \in G_w$ we have $kh_w\hw{v} = k\hw{v}$.
		\item Given a wall $k\hw{v} \in \mathbb{W}$, there is an unique minimal (with respect to $|\cdot|_r$) element $h\in G(\Gamma)$ such that $h\hw{v} = k\hw{v}$.
		\item If $v$ and $w$ are non-adjacent or equal and $k$ is minimal as in (iii), then $k\hw{v}$ is parallel to $\hw{w}$.
\end{enumerate}
\end{proposition}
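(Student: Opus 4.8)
The plan is to take the four items in order, reducing each to bookkeeping about which vertex groups an element begins with, and using the earlier lemmas (especially Remark~\ref{remk} and Lemma~\ref{initial}).

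For (i) the argument is formal: left translation by $h^{-1}$ is a bijection of $G(\Gamma)$ carrying the four sets $k\hw{v}\cap h\hw{w}$, $k\hwc{v}\cap h\hw{w}$, $k\hw{v}\cap h\hwc{w}$, $k\hwc{v}\cap h\hwc{w}$ onto the corresponding four sets for the pair $h^{-1}k\hw{v}$ and $\hw{w}$ (using $h^{-1}(h\hw{w})=\hw{w}$ and $h^{-1}(h\hwc{w})=\hwc{w}$), and a bijection preserves emptiness and nonemptiness, so the two pairs cross together. For (ii), by (i) it suffices to show $\hw{v}$ and $\hw{w}$ cross when $v\sim w$: choosing $g_v\in G_v^e$ and $g_w\in G_w^e$, the elements $g_vg_w$, $g_v$, $g_w$, $e$ lie respectively in $\hw{v}\cap\hw{w}$, $\hw{v}\cap\hwc{w}$, $\hwc{v}\cap\hw{w}$, $\hwc{v}\cap\hwc{w}$ (here $g_vg_w$ begins with both $G_v$ and $G_w$ because $g_v$ and $g_w$ commute). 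For the identity $kh_w\hw{v}=k\hw{v}$, injectivity of left translation by $k$ reduces us to $h_w\hw{v}=\hw{v}$ for $h_w\in G_w$, i.e.\ to the claim that $g$ begins with $G_v$ iff $h_wg$ does. Given a reduced word $g\equiv g_1\cdots g_n$ with $g_1\in G_v$, commuting $h_w$ past $g_1$ produces the formal word $(g_1,h_w,g_2,\dots,g_n)$ for $h_wg$; by Remark~\ref{remk} no later $G_v$-syllable of $g$ can be shuffled leftward past $g_2,\dots,g_{j-1}$ (that would contradict reducedness of $g_1\cdots g_n$), so $g_1$ survives as the first syllable of a reduced word for $h_wg$. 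Replacing $h_w$ by $h_w^{-1}$ gives the converse.

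For (iii), a minimal $h$ exists because $\{|h|_r : h\hw{v}=k\hw{v}\}$ is a nonempty subset of $\mathbb{N}$. For uniqueness I would first establish $\mathrm{Stab}(\hw{v}):=\{x : x\hw{v}=\hw{v}\}=G(\lk(v))$: the inclusion ``$\supseteq$'' is (ii), and for ``$\subseteq$'' one notes that $x\hw{v}=\hw{v}$ forces $x$ not to begin with $G_v$ while $xg_v$ begins with $G_v$ for each $g_v\in G_v^e$, and a short reduced-word argument (a $G_v$-syllable can be brought to the front of $xg_v$ only when every syllable of $x$ lies in $G(\st(v))$, which together with ``$x$ does not begin with $G_v$'' forces $x\in G(\lk(v))$) finishes. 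Then $\{h : h\hw{v}=k\hw{v}\}=kG(\lk(v))$ is a coset of the special subgroup $G(\lk(v))$, and such a coset has a unique shortest element, the unique representative that does not end with any $G_w$, $w\in\lk(v)$: if $h$ and $h'$ were two shortest representatives with $h^{-1}h'\equiv s_1\cdots s_p\ne e$ and $s_i\in G(\lk(v))$, then since neither ends with $G(\lk(v))$ the concatenation of the reduced word of $h$ with $s_1\cdots s_p$ admits no cancellation (a cancelling syllable of $h$ would have to shuffle to the right end of $h$), so $|h'|_r=|h|_r+p>|h|_r$, a contradiction. I expect pinning down the stabilizer and the minimal coset representative to be the main obstacle.

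For (iv), suppose $v=w$ or ($v\ne w$ and $v\not\sim w$), and let $k$ be minimal, so $k$ does not end with $G(\lk(v))$; the case $k=e$ is trivial, so split on whether $k$ ends with $G_v$. If $k$ ends with $G_v$, then $k^{-1}$ begins with $G_v$, and since $g\in k\hwc{v}$ means $k^{-1}g$ does not begin with $G_v$, Lemma~\ref{initial} furnishes for each such $g$ a reduced word $g\equiv g_1\cdots g_n$ whose first $m=|k|_r$ syllables spell $k$, with $g_m\in G_v$. If $k$ does not end with $G_v$, then $k^{-1}$ does not begin with $G_v$, and the second part of Lemma~\ref{initial} gives the same conclusion for each $g\in k\hw{v}$, now with $g_{m+1}\in G_v$; note that in this subcase $k$ ends with some $G_u$, $u\notin\st(v)$, hence $k\notin G(\st(v))$. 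In both cases the crux is that such a $g$ begins with $G_w$ if and only if $k$ does: a $G_w$-syllable that can be shuffled to the front of $g$ must lie among the first $m$ syllables, i.e.\ inside $k$, for otherwise it would have to be moved past the distinguished $G_v$-syllable — impossible by Remark~\ref{remk} when $v=w$, and because $G_v$ and $G_w$ do not commute when $v\not\sim w$ — or, in the second subcase with $v=w$, past all of $k\notin G(\st(v))$. Consequently $k\hwc{v}$ (resp.\ $k\hw{v}$) lies entirely in $\hw{w}$ when $k$ begins with $G_w$ and entirely in $\hwc{w}$ otherwise; in each case a half-space of one wall is contained in a half-space of the other, so $k\hw{v}$ and $\hw{w}$ do not cross, i.e.\ they are parallel.
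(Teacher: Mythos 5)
Your proposal is correct and follows essentially the same route as the paper's proof: translation invariance for (i), the four explicit witnesses plus the commutation argument for (ii), the identification of $\{h : h\hw{v}=\hw{v}\}$ with $G(\lk(v))$ for (iii), and the case split on whether $k$ ends with $G_v$ combined with Lemma~\ref{initial} and Remark~\ref{remk} for (iv). The only cosmetic difference is in the uniqueness step of (iii), where you argue directly that concatenating a reduced word not ending in $G(\lk(v))$ with a nontrivial reduced word from $G(\lk(v))$ admits no cancellation, whereas the paper extracts the same conclusion from the normal form of Lemma~\ref{hg}; the two arguments rest on identical facts.
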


\begin{proof}
Part (i) follows from invariance built into the walls:
for any wall $k\hw{v}$, an element $g$ is in $k\hw{v}$ if and only if $h^{-1}g\in h^{-1}k\hw{v}$.
Hence, given another wall $h\hw{w}$, an element $g$ is in $k\hw{v}\cap h\hw{w}$ if and only if $h^{-1}g\in h^{-1}k\hw{v}\cap \hw{w}$. 
	Likewise the other three intersections $k\hw{v}\cap h\hwc{w},$ $k\hwc{v}\cap h\hw{w},$ and $k\hwc{v}\cap h\hwc{w}$ are non-empty if and only if $h^{-1}k\hw{v}\cap \hwc{w}$, $h^{-1}k\hwc{v}\cap \hw{w}$, and $h^{-1}k\hwc{v}\cap \hwc{w}$ are non-empty respectively.
	Thus if $k\hw{v}$ and $h\hw{w}$ cross, the eight intersections above are all non-empty and $h^{-1}k\hw{v}$ and $\hw{w}$ cross.

	For (ii), suppose that $k\hw{v}$ is a wall and $v$ and $w$ are adjacent vertices of $\Gamma$. 
	We first prove that $ k\hw{v}$ and $k\hw{w}$ cross. 
	Note for any $g_v \in G_v$ and $g_w\in G_w$ we have the following containments: 
	$$k^{-1} \in  k\hwc{v}\cap k\hwc{w},\; 
	k^{-1}g_v \in  k\hw{v}\cap k\hwc{w},\;
	k^{-1}g_w \in  k\hwc{v}\cap k\hw{w},\text{ and }
	k^{-1}g_vg_w \in  k\hw{v}\cap k\hw{w}.$$
	As these four intersections are all non-empty, by definition $k\hw{v}$ and $k\hw{w}$ cross.

	For the second part, suppose $h_w\in G_w$.
	Since $h_w$ commutes with elements of $G_v$, the group element $h_w^{-1}k^{-1}g$ begins with $G_v$ if and only if $k^{-1}g$ begins with $G_v$. This implies that $kh_w\hw{v}=k\hw{v}$.

	To prove part (iii), let  $k\hw{v}$ be a wall.
	If $k$ ends with $G(\lk(v))$, then we may write $k\equiv k_1\dots k_n$ where $k_n\in G_w$ and $v$ and $w$ are adjacent.
	By part (ii) we see that $k\hw{v}=k_1\dots k_n\hw{v} = k_1\dots k_{n-1}\hw{v}$.
	We may repeat this process until we have an $h=k_1\dots k_j$ which no longer ends with $G(\lk(v))$ and $k\hw{v} = h\hw{v}$.

	Next suppose $h\hw{v}$ is a wall such that $h\hw{v} = \hw{v}$.
	This imposes a strong constraint on $h$: for any $g\in G(\Gamma)$, $g$ begins with $G_v$ if and only if $h^{-1}g$ begins with $G_v$.
	In particular, $e$ does not begin with $G_v$, so $h^{-1}e=h^{-1}$ must not begin with $G_v$.
	Let $g_v$ be any element of $G_v$.
	Then $g_v$ begins with $G_v$ which implies that $h^{-1}g_v$ must begin with $G_v$.
	But $h^{-1}$ does not begin with $G_v$, hence $g_v$ must be able to shuffle to the beginning of $h^{-1}$. 
	In other words, $h^{-1}g_v=g_vh^{-1}$ and every syllable of $h$ must be in $G(\lk(v))$. 

	Finally, given two equivalent walls $k\hw{v}=h\hw{v}$, both reduced as above, we have that $h^{-1}k\hw{v} = \hw{v}$, so every syllable of $h^{-1}k$ must be in $G(\lk(v))$.
	By Lemma \ref{hg}, we may find reduced words $k\equiv k_1\dots k_n$ and $h\equiv h_1\dots h_m$ such that $k_1\dots k_{q}=h_1\dots h_{q}$ and
	$$h^{-1}k\equiv h^{-1}_m\dots h^{-1}_{q+p+1} (h^{-1}_{q+1} k_{q+1})\dots(h^{-1}_{q+p}k_{q+p})k_{q+p+1}\dots k_n$$ for some $q\le\min(m,n)$ and $p\le\kappa$. 

Since reduction implies that $h$ does not end with $G(\lk(v))$, $q\ge m$.
Similarly, since $k$ does not end with $G(\lk(v))$, $q\ge n$.
Since $q\le \min(m,n)$, we have $q=m=n$, and $h^{-1}k=e$.
Hence if $k$ and $h$ are representatives of the same wall and reduced as above, then $k=h$.

For part (iv), suppose that $v$ and $w$ are non-adjacent or equal and $k\hw{v}$ is a wall.  
	We wish to show $k\hw{v}\parallel \hw{w}$.
	If $k$ is not a reduced as in part (iii), then we could reduce it to $h\hw{v}$, and $h\hw{v}\parallel \hw{w}$ if and only if $k\hw{v}\parallel \hw{w}$. 
	So without loss of generality, $k$ does not end with $G(\lk(v))$.
      
	If $k=e$, then if $v=w$, $\hw{v}$ and $\hw{w}$ are parallel since they are equal.
	If $v$ and $w$ are not adjacent, then $\hw{v}\cap\hw{w}$ is empty, as no word can begin with both $G_v$ and $G_w$.
	We now assume $0<m=|k|_r$.

	First suppose $k$ ends with $G_v$ and $g\in k\hwc{v}$.
	Then $k^{-1}g$ does not begin with $G_v$ and by Lemma \ref{initial}, $g\equiv g_1\dots g_n$ such that $k\equiv g_1\dots g_m$ and $g_m\in G_v$.
	In other words, $g$ must begin with $k$.
	If $k$ begins with $G_w$, then $g$ must begin with $G_w$ and $k\hwc{v}\cap\hwc{w}$ must be empty.
	If $k$ does not begin with $G_w$ but $g$ does, then some syllable $g_i$ with $i>m$ is in $G_w$ and $g_j\in G(\lk(w))$ for all $j<i$.
	But when $v$ and $w$ are non-adjacent, $g_m$ is not in $G(\lk(w))$, a contradiction.
        When $v=w$, by Remark \ref{remk}, there is some $m<j<i$ such that $g_j\not\in G(\st(w))$, again a contradiction. 
	Hence $g$ cannot begin with $G_w$ and $k\hwc{v}\cap\hw{w}$ must be empty.

	Now suppose $k$ does not end with $G_v$ and $g\in k\hw{v}$.
	Then $k^{-1}g$ begins with $G_v$ and by Lemma \ref{initial} $g$ must begin with $k\equiv g_1\dots g_m$ and $g_{m+1}\in G_v$.
	Following from the arguments in the previous paragraph, but replacing $g_m$ with $g_{m+1}$, we see that
if $k$ begins with $G_w$, then $g$ begins with $G_w$, and $k\hw{v}\cap\hwc{w}$ must be empty.
If $k$ does not begin with $G_w$, then $g$ cannot begin with $G_w$ and $k\hw{v}\cap\hw{w}$ must be empty.

 As these cases exhaust all the possibilities for $k$, we have shown that $k\hw{v}$ is parallel to $\hw{w}$ whenever $v$ and $w$ are not adjacent or $v=w$.
\end{proof}

We call the unique minimal wall representative $h$ of Proposition \ref{facts} (iii) a \textit{reduced wall representative}.

\begin{corollary}
	There is an uniform bound on the size of a set of mutually crossing walls of $\mathbb{W}$.
\end{corollary}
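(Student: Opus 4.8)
The plan is to reduce the corollary to a statement about the combinatorics of the graph $\Gamma$, using Proposition~\ref{facts} to translate crossing of walls into adjacency of vertices. Suppose $\{k_1\hw{v_1},\dots,k_N\hw{v_N}\}$ is a set of mutually crossing walls, each written with $k_i$ a reduced wall representative. First I would use Proposition~\ref{facts}(i) to normalize: replacing everything by $k_1^{-1}$-translates, we may assume $k_1=e$, so $\hw{v_1}$ is one of the walls. More generally, crossing is a $G(\Gamma)$-invariant relation, so the combinatorial type of the configuration only depends on the orbit.

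The key step is to show that the vertices $v_1,\dots,v_N$ appearing in a mutually crossing family, together with how the $k_i$ ``spread them out,'' are constrained so that $N$ is bounded by a function of $|\Gamma|$ and $\kappa$. The crucial input is Proposition~\ref{facts}(iv): if $v$ and $w$ are non-adjacent or equal and $k$ is a reduced wall representative, then $k\hw{v}$ is \emph{parallel} to $\hw{w}$ — they do not cross. Combined with invariance (i), this says: for $k\hw{v}$ and $h\hw{w}$ to cross, a necessary condition (after translating so one of them is based at $e$ and reduced) is that $v$ and $w$ be \emph{adjacent} in $\Gamma$. Since $\Gamma$ is a finite simplicial graph with clique number $\kappa$, at most $\kappa$ mutually adjacent vertices exist; but several walls in our family may share the same vertex $v$ (differing by their $k_i$), so this alone bounds the number of distinct \emph{vertices} appearing, not $N$ itself.

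So the remaining work is to bound, for each fixed vertex $v$, the number of mutually crossing walls of the form $k\hw{v}$ that can appear, and then multiply by $\kappa$. Here I would argue that if $k\hw{v}$ and $k'\hw{v}$ cross, then by invariance $(k'^{-1}k)\hw{v}$ crosses $\hw{v}$; I expect one can show, using Lemma~\ref{initial} and the explicit description of $\hw{v}$ and $\hwc{v}$ as the elements that do/don't begin with $G_v$, that $\hw{v}$ crosses only walls $j\hw{v}$ with $|j|_r$ small — in fact $j$ must lie in a bounded neighborhood of $e$ controlled by $\kappa$ (the relevant $j$ end with $G_v$ and their ``non-$\lk(v)$ part'' is tightly constrained), so there are only finitely many pairwise-crossing such walls through a single vertex orbit. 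Feeding the per-vertex bound into the $\kappa$-bound on vertices gives a uniform bound on $N$ depending only on $\Gamma$.

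The main obstacle I anticipate is the last step: making precise the claim that, through a single vertex $v$, only boundedly many translates $k\hw{v}$ can pairwise cross. Parts (i)--(iv) of Proposition~\ref{facts} cleanly handle crossings between \emph{different} vertices and give parallelism in the ``bad'' cases, but two walls based at the same vertex $v$ with $v$ adjacent to itself being impossible means one must instead exploit part~(ii) (translating by $G_w$ for $w\in\lk(v)$ does not change $k\hw{v}$) together with the reduced-representative normalization from (iii) to pin down exactly which $k$ can occur. Once that finite list is in hand — or even just shown to be finite — Theorem~\ref{chat} applies and yields that the cube complex associated to $\mathbb{W}$ is finite dimensional, which is the real payoff of the corollary.
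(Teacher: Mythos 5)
Your reduction of crossing to adjacency via Proposition~\ref{facts}(i) and (iv) is exactly the paper's argument, but you stop short of the conclusion and instead manufacture an obstacle that is not there. Proposition~\ref{facts}(iv) is stated for $v$ and $w$ \emph{non-adjacent or equal}: in particular, taking $v=w$, for any reduced wall representative $k$ the wall $k\hw{v}$ is parallel to $\hw{v}$. Combined with (i) and (iii), this says that two distinct walls over the \emph{same} vertex never cross. There is no ``per-vertex'' count to perform, and no need to invoke Lemma~\ref{initial} or part~(ii) to pin down which translates $k\hw{v}$ can cross $\hw{v}$ --- the answer is: none of them. Consequently, in a mutually crossing family the underlying vertices are pairwise distinct and pairwise adjacent, so the family has at most $\kappa$ elements; that is the paper's three-line proof.

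The genuine gap in your write-up is therefore the step you yourself flag as ``the main obstacle'': bounding the number of pairwise-crossing translates of $\hw{v}$ through a fixed vertex is left entirely unproven (``once that finite list is in hand --- or even just shown to be finite''), and the route you sketch for it, via a bounded neighborhood of $e$ and an explicit list of admissible $k$, would at best yield a weaker and messier bound than $\kappa$ while still requiring an argument you have not supplied. Re-read the hypothesis of (iv): the clause ``or equal'' is there precisely to dispose of this case.
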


\begin{proof}
	Recall that $\kappa$ is the maximum number of mutually adjacent vertices of $\Gamma$.
	Combining Proposition \ref{facts} (i) and (iv), we see that two walls cross  only if they have adjacent vertices.
	Since a maximum of $\kappa$ vertices are mutually adjacent, at most $\kappa$ walls mutually cross.
\end{proof}

\begin{lemma}
	For $\mathbb{W}$ as above, the wall distance between $e$ and $g \in G(\Gamma)$ is $d_W(e,g) = 2|g|_r$.
\end{lemma}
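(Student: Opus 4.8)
The plan is to count, for each $g \in G(\Gamma)$, exactly which walls in $\mathbb{W}$ separate $e$ from $g$, and show there are precisely $2|g|_r$ of them. A wall $k\hw{v}$ separates $e$ and $g$ iff exactly one of $e, g$ begins (after translating by $k^{-1}$) with $G_v$; concretely, $k\hw{v}$ separates $e$ and $g$ iff exactly one of the statements ``$k^{-1}$ begins with $G_v$'' and ``$k^{-1}g$ begins with $G_v$'' holds. So I want to set up a correspondence between such separating walls and the syllables appearing in a reduced word $g \equiv g_1 \cdots g_n$, with each syllable contributing two walls.

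First I would fix a reduced word $g \equiv g_1 \cdots g_n$ and, for each $1 \le m \le n$, consider the two candidate walls associated to the $m$-th syllable: the wall $k_m \hw{v_m}$ where $k_m := g_1 \cdots g_{m-1}$ (this one ``turns on'' at position $m$, since $k_m^{-1} = g_{m-1}^{-1}\cdots g_1^{-1}$ does not begin with $G_{v_m}$ by reducedness but $k_m^{-1}g = g_m g_{m+1}\cdots g_n$ does begin with $G_{v_m}$), and the wall $k'_m \hw{v_m}$ where $k'_m := g_1 \cdots g_m$ (here $(k'_m)^{-1}$ begins with $G_{v_m}$ via the last syllable $g_m^{-1}$, whereas $(k'_m)^{-1}g = g_{m+1}\cdots g_n$ does not begin with $G_{v_m}$ — this needs Remark~\ref{remk}, since any later syllable from $G_{v_m}$ would be blocked by something outside $G(\st(v_m))$). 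Lemma~\ref{initial} is exactly the tool that confirms these really are separating walls and, more importantly, that \emph{every} separating wall arises this way: if $k\hw{v}$ separates $e$ and $g$, then by Lemma~\ref{initial} (applied with $h = k$, after reducing $k$ to a reduced wall representative and noting such an $h$ does not end with $G(\lk(v))$), $k$ must be an initial subword $g_1\cdots g_m$ of some reduced word for $g$ with either $g_m \in G_v$ (the second type) or $g_{m+1} \in G_v$ (the first type).

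Then I need the count to come out right: the two lists $\{k_m\hw{v_m} : 1 \le m \le n\}$ and $\{k'_m\hw{v_m} : 1 \le m \le n\}$ should together be $2n$ \emph{distinct} walls, and should exhaust all separating walls. Distinctness within and across the lists is where Proposition~\ref{facts}(iii) (uniqueness of reduced wall representatives) does the work: I would check that $k_m$ and $k'_{m'}$, after reduction, give genuinely different minimal representatives — the key point being that $k_m\hw{v_m} = k'_{m'}\hw{v_{m'}}$ would force, via (iii), an identity among initial subwords of $g$ that contradicts reducedness unless $m = m'$ and the two walls coincide, which they don't since one ``contains $e$'' and the other doesn't when read correctly. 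Independence of the choice of reduced word is automatic because the set of syllables of $g$ is canonical (Green), and the construction only used syllables and their positions relative to $\st(v)$.

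The main obstacle I expect is the bookkeeping in the exhaustion-and-distinctness step: making sure that when different reduced words for $g$ are used, or when a vertex group $G_v$ contributes several syllables to $g$, the walls counted are neither double-counted nor missed. This is precisely the subtlety that Remark~\ref{remk} and Lemma~\ref{initial} are designed to handle, so I would lean on them heavily rather than re-deriving the combinatorics by hand; the ``$2$'' in $2|g|_r$ ultimately reflects that each syllable is flanked by one wall on its left and one on its right, matching the picture of the barycentrically subdivided tree in the free-product case.
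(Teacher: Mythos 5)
Your proposal follows essentially the same route as the paper's proof: characterize a separating wall $k\hw{v}$ by which of $k^{-1}$ and $k^{-1}g$ begins with $G_v$, invoke Lemma~\ref{initial} to force $k$ to be an initial subword $g_1\cdots g_m$ with $g_m\in G_v$ or $g_{m+1}\in G_v$, and conclude that the separating walls are exactly the pairs $g_1\cdots g_{i-1}\hw{v_i}$, $g_1\cdots g_i\hw{v_i}$ attached to each syllable. The only difference is that you make explicit the distinctness and well-definedness bookkeeping (via Proposition~\ref{facts}(iii) and Remark~\ref{remk}) that the paper leaves implicit; this is correct and harmless.
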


\begin{proof}
	The case when $g=e$ is trivial, so we assume $g\ne e$ and our task is to count the number of walls which separate $e$ and $g$. 
	Let $g\equiv g_1g_2\dots g_m$ and suppose $k$ is a reduced wall representative such that $k\hw{v}$ separates $g$ and $e$.
Recall that $k\hw{v}$ separates $e$ and $g$ if either $e$ is in $k\hw{v}$ and $g$ is in $k\hwc{v}$ or $g$ is in $k\hw{v}$ and $e$ is in $k\hwc{v}$.

	If $e$ is in $k\hw{v}$ and $g$ is in $k\hwc{v}$ then $k^{-1}$ begins with $G_v$ and $k^{-1}g$ does not. 
	By Lemma \ref{initial}, $k\equiv g_1\dots g_n$ and $g_n\in G_v$ for some $n\le m$.
	If $g$ is in $k\hw{v}$ and $e$ is in $k\hwc{v}$,
	then $k^{-1}$ does not begin with $G_v$ and $k^{-1}g$ does begin with $G_v$. 
	Appealing again to Lemma \ref{initial}, $k$ must have the form $k\equiv g_1\dots g_n$ and $g_{n+1}\in G_v$ for some $n<m$.
	From these two statements we see that for each syllable $g_i$ of $g$, with $v\in \Gamma$ such that $g_i\in G_v$, the walls $g_1\dots g_i\hw{v}$ and $g_1\dots g_{i-1}\hw{v}$ separate $e$ and $g$ and are the only walls to do so.
	Hence $d_W(e,g) = 2|g|_r$.
\end{proof}

\begin{lemma}
	The wall distance is $G(\Gamma)$-invariant: $d_\mathbb{W}(h,hg) = d_\mathbb{W}(e,g)$ for all $g,h \in G(\Gamma)$.
\end{lemma}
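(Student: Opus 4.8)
The plan is to exploit the fact that $\mathbb{W}$ was constructed to be invariant under the left-multiplication action of $G(\Gamma)$ on subsets of $G(\Gamma)$, and that this action simply permutes the walls of $\mathbb{W}$.

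First I would record the action explicitly. For any $h,k\in G(\Gamma)$ and $v\in\Gamma$, left multiplication by $h$ sends the subset $k\hw{v}\subseteq G(\Gamma)$ to $\{hkg : \exists g_v\in G_v,\ |g_vg|_r<|g|_r\} = (hk)\hw{v}$, and likewise $k\hwc{v}$ to $(hk)\hwc{v}$. Since $(hk)\hw{v}\in\mathbb{W}$ by definition of $\mathbb{W}$, the assignment $k\hw{v}\mapsto (hk)\hw{v}$ is a self-map of $\mathbb{W}$; it is a bijection, with inverse given by left multiplication by $h^{-1}$. The only point needing care is that this map is well defined on \emph{walls}, i.e.\ independent of the chosen representative $k$ — but this is automatic, as it is nothing more than the image of the subset $k\hw{v}\subseteq G(\Gamma)$ under the bijection $x\mapsto hx$ of $G(\Gamma)$, hence depends only on the set $k\hw{v}$ itself. (By the Lemma preceding Proposition \ref{facts}, this map even sends preferred half-spaces to preferred half-spaces, so it is genuinely an action on $\mathbb{W}$; for the present count, though, only the underlying partition of each wall matters.)

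Next I would observe that this bijection preserves separation. A wall $k\hw{v}$ separates two points $x,y\in G(\Gamma)$ exactly when precisely one of $x,y$ lies in the half-space $k\hw{v}$. Since $x\mapsto hx$ is a bijection of $G(\Gamma)$ carrying $k\hw{v}$ onto $(hk)\hw{v}$, we have $x\in k\hw{v}\iff hx\in (hk)\hw{v}$, and similarly for the complements. Therefore $k\hw{v}$ separates $e$ and $g$ if and only if $(hk)\hw{v}$ separates $h$ and $hg$.

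Finally I would combine these two observations: the bijection $w\mapsto h\cdot w$ of $\mathbb{W}$ restricts to a bijection from the set of walls separating $e$ and $g$ onto the set of walls separating $h$ and $hg$. Since both sets are finite (a defining property of a wall space), they have the same cardinality, which is precisely the statement $d_\mathbb{W}(h,hg)=d_\mathbb{W}(e,g)$. I do not expect any real obstacle here; the entire content is the verification that left multiplication acts on $\mathbb{W}$ by a bijection preserving the separation relation, and both facts are immediate from the definitions.
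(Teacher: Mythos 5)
Your proposal is correct and follows exactly the paper's argument: the paper's one-line proof is precisely the observation that $g \in k\hw{v}$ if and only if $hg\in hk\hw{v}$, so that $k\hw{v}$ separates $e$ and $g$ if and only if $hk\hw{v}$ separates $h$ and $hg$. You have merely spelled out the resulting bijection between the two separating sets of walls, which the paper leaves implicit.
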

\begin{proof}
	This follows from the construction of the walls:  For any $g$ and $h\in G(\Gamma)$, $k\hw{v}$ separates $g$ and $e$ if and only if $hk\hw{v}$ separates $hg$ and $h$, since $g \in k\hw{v}$ if and only if $hg\in hk\hw{v}$.
\end{proof}

We may now apply Theorem \ref{chat} to $\mathbb{W}$ to realize a finite dimensional CAT(0) cube complex $X$ on which $G(\Gamma)$ acts, with $d_X(gx_0,hx_0)=2|h^{-1}g|_r$ for some $x_0\in X$.
We shall return to this distance function in Section 5.

\section{Extending functions to free products}

In this section we shall prove two special cases of the main theorem:
if two discrete groups $A$ and $B$ are either amenable or weakly amenable, then $A\ast B$ is weakly amenable.
In doing so we shall develop the machinery used in the final proof.
The case for amenable factors was originally proved by Bozejko and Picardello \cite{BoPi93} in 1993. 
In 2006 Ricard and Xu \cite{RiXu06} proved the weakly amenable case in the more general context of free products of $C^{\ast}$-algebras.
The latter proof was analytic in nature. 
In this paper we give a constructive proof which is generalizable to graph products.
The key step in this construction is the extension of positive definite or completely bounded (with cb-norm close to  1) functions from the factor groups to their free product.

Let us motivate this extension.
Consider $\Z\ast \Z = \F_2$, the free group on two generators.
By Schoenberg's theorem, $\f_{\Z}(n):=e^{-|n|}$ is a positive definite function on $\Z$.
If we wish to extend $\f_\Z$ to a function on the free group, there is a natural choice: the function $\bF(g):= e^{-\|g\|}$.
It is positive definite (Haagerup gives a proof in \cite{Haag78}), the restriction of $\bF$ to either factor group is $\f_\Z$, and
for a word $g=a^{p_1}b^{q_1}\dots a^{p_n}b^{q_n}$ in $\F_2$,
\begin{equation*}
	\begin{split}
		\bF(g) 
		&= e^{-\|g\|}
		= e^{-\left(\sum_1^n |p_i|+|q_i|\right)} 
		\\ &= e^{-|p_1|}e^{-|q_1|}\dots e^{-|p_n|}e^{-|q_n|}
		\\ &= \f_\Z(p_1)\f_\Z(q_1)\dots \f_\Z(p_n)\f_\Z(q_n).
	\end{split}
\end{equation*}
We see that $\bF(g)$ is just the product of $\f_\Z$ applied to each syllable (under the isomorphism $a^{n}\mapsto n$).
For discrete groups other than $\Z$ we would like something similar.
\subsection{The positive definite case.}
Let $G= A \ast B$ and let $\f_A$ and $\f_B$ be positive definite functions on the factor groups with $\f_A(e) = 1 = \f_B(e)$.
Recall that since $\f_A$ is positive definite, we may represent it as $\f_A(a^{-1}a') = \langle\alpha_A(a'),\alpha_A(a)\rangle$, where $\scr{H_A}$ is a Hilbert space and $\alpha_A$ is a map of $A$ into $\scr{H_A}$ with $\|\alpha_A(a)\|=1$ for all $a \in A$.
We have a similar Hilbert space-valued map $\alpha_B$ associated with $\f_B$.
Guided by our example above, for $g = a_1b_1\dots a_nb_n$, we would like to define our extension as
\begin{equation*}
	\begin{split}
		\langle \alpha(g),\alpha(e)\rangle  
		\stackrel{?}{=} \bF(g)
		&:=\f_A(a_{1})\f_B(b_{1})\dots \f_A(a_{n})\f_B(b_{n})
		\\ &=\langle \alpha_A(a_{1}),\alpha_A(e)\rangle \langle \alpha_B(b_{1}),\alpha_B(e)\rangle \dots \langle \alpha_A(a_{n}),\alpha_A(e)\rangle \langle \alpha_B(b_{n}),\alpha_B(e)\rangle 
	\end{split}
\end{equation*} 
The questionable equality would characterize $\bF$ as positive definite.
We can realize such an equality if the Hilbert space in which we take the inner product on the left-hand side is a tensor product of the Hilbert spaces used on the right-hand side. 
This product must be over some nice index set that isolates syllables of words in $G$.

Let $T$ be the set of left cosets of $A$ and $B$ ($T\cong G/A \sqcup G/B$).
For each $t\in T$, define the Hilbert space $\scr{H}_t$ and its vacuum vector $\omega_t$ as either $\scr{H}_A$ and $\alpha_A(e)$  or $\scr{H}_B$ and $\alpha_B(e)$, depending on if $t$ is a left coset of $A$ or $B$, respectively.  
Let $\scr{\widehat H}:= \bigotimes_T \scr{H}_t$ be the infinite tensor product of the $H_t$ over $T$ as described in Section \ref{hilbert}.

Since $G$ acts by left multiplication on $T$, it induces an unitary action $\pi$  on $\scr{\widehat H}$ by permuting indices, namely
$\pi_{g} \xi_{t} = \xi_{gt}$, where $\xi_{t}$ is the $t$ component of the tensor $\xi \in \scr{\widehat H}$.

From now on we shall write, for $g$ in $G$, $g=g_1g_2\dots g_n$, where the representation is understood to be an alternating word in the factor groups $A$ and $B$.
For a syllable $g_i$, we write $G(i)$ for the factor group containing $g_i$
and write $\alpha(g_i)$ for $\alpha_{G(i)}(g_i) \in \scr{H}_{G(i)}$.
For such a word $g = g_1g_2\dots g_n$ in $G$, define 
$$\widehat\alpha(g) := \alpha(g_1)_{G(1)}\otimes \alpha(g_2)_{g_1G(2)} \otimes \alpha(g_3)_{g_1g_2G(3)} \otimes \cdots 
\otimes \alpha(g_n)_{g_1\dots g_{n-1}G(n)}.$$
This is a map from $G$ to $\scr{\widehat H}$, and the kernel defined by the inner product $\langle\widehat\alpha(g),\widehat\alpha(e)\rangle$ is exactly the extension we were looking for: 
\begin{equation*}
	\begin{split}
		\langle\widehat\alpha(g),\widehat\alpha(e)\rangle
		&=\langle \alpha(g_1),\alpha(e)\rangle_{G(1)} 
		\langle\alpha(g_2),\alpha(e)\rangle_{g_1G(2)} \cdots
		\langle\alpha(g_n),\alpha(e)\rangle_{g_1\dots g_{n-1} G(n)}
		\\	&= \f_{G(1)}(g_1)\f_{G(2)}(g_2)\cdots \f_{G(n)}(g_n).
	\end{split}
\end{equation*}

Our choice of $T$ gives us a geometric interpretation of this construction.
We can construct a graph $X$ using $T$ as the vertex set, connecting two cosets with an edge if they share an element.
This implies that no two cosets of the same factor group share an edge.
This graph is a tree (called the Bass-Serre tree) and each edge corresponds uniquely to a group element.
There is an map of $G$ into the paths of $X$, sending each $g$ to the path from $G(1)$ to $g_1\dots g_{n-1} G(n)$.
Each edge along the path corresponds to a subword $g_1\dots g_i$ of $g$.
Given two group elements $g$ and $h$, the paths of $g$ and $h$  overlap if and only if $g$ and $h$ begin with the same subword.

Now suppose that $h=h_1\dots h_m\in G$.
Notice that $\widehat\alpha(g)$ has non-vacuum entries only for indices which are along  the path of $g$ in $X$, including endpoints, likewise with $\widehat\alpha(h)$.
When we take an inner product $\langle \widehat\alpha(g),\widehat\alpha(h)\rangle$, we get values other than one only on indices along the paths of $g$ and $h$ in $X$.
If the two paths overlap then $g$ and $h$ must begin with the same subword $k=g_1\dots g_i = h_1\dots h_i$.
For each syllable $g_j$ of $k$, the $g_1\dots g_{j-1}G(j)^\text{th}$ factor of the inner product $\langle\widehat\alpha(g),\widehat\alpha(h)\rangle$ is $\langle \alpha(g_j),\alpha(h_j)\rangle = \langle\alpha(g_j),\alpha(g_j)\rangle =1$. 
Hence we have 
\begin{equation*}
	\begin{split}
		\langle\widehat\alpha(g),\widehat\alpha(h)\rangle 
		&= 1\cdot \langle\alpha(g_{i+1}),\alpha(h_{i+1})\rangle 
		\cdot \left(\prod_{j>i+1}^n \langle\alpha(g_j),\alpha(e)\rangle\right)\cdot \left(\prod_{j>i+1}^m \langle\alpha(e),\alpha(h_j)\rangle\right)
		\\&= \langle\alpha(h_{i+1}^{-1}g_{i+1}),\alpha(e)\rangle\cdot \left(\prod_{j>i+1}^n \langle\alpha(g_j),\alpha(e)\rangle\right)\cdot \left(\prod_{j>i+1}^m \langle\alpha(h^{-1}_j),\alpha(e)\rangle\right)
		\\ &= \langle\widehat\alpha(h^{-1}g),\widehat\alpha(e)\rangle.
	\end{split}
\end{equation*}
A similar argument shows that $\langle\widehat\alpha(lg),\widehat\alpha(lh)\rangle = \langle\widehat\alpha(g),\widehat\alpha(h)\rangle$ for any $l \in G$, so the positive definite function $\bF(h^{-1}g) := \langle\widehat\alpha(g),\widehat\alpha(h)\rangle$  is  well defined on $G$.

Note that $\bF$ is not finitely supported regardless of whether or not $\f_A$ or $\f_B$ are.
This is as it should be---if we could construct a finitely supported positive definite function using the techniques above, our free product would be amenable! 
We now have enough to prove the first special case.
As it is a corollary of the main theorem, we only sketch the proof.
\begin{theorem}
	If $A$ and $B$ are amenable discrete groups, $A\ast B$ is weakly amenable.
\end{theorem}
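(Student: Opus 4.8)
The plan is to combine the extension machinery just developed with the standard Følner/approximate-identity characterization of amenability. Since $A$ and $B$ are amenable, each admits a net of finitely supported, normalized positive definite functions $\{\f_A^{(i)}\}$ and $\{\f_B^{(i)}\}$ converging pointwise to $\mathbf{1}$, with $\f_A^{(i)}(e)=\f_B^{(i)}(e)=1$. For each index $i$, feed $\f_A^{(i)}$ and $\f_B^{(i)}$ into the tensor-product construction of this section to obtain a positive definite function $\bF^{(i)}$ on $G=A\ast B$ with $\bF^{(i)}(g)=\prod_{j=1}^{n}\f_{G(j)}^{(i)}(g_j)$ for $g\equiv g_1\cdots g_n$. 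As observed in the text, these $\bF^{(i)}$ are positive definite (hence completely bounded with cb-norm $1$) but not finitely supported, so a single cutoff is needed.

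The key step is to multiply by the characteristic function of the ball of radius $d$ in the word-length metric. Write $\chi_d$ for the characteristic function of $\{g\in G:\|g\|\le d\}$, where $\|\cdot\|$ is the word length coming from fixed finite generating sets of $A$ and $B$ (equivalently one may use syllable length $|\cdot|_r$, since the free product is a tree and the relevant Haagerup-type estimate is classical there). The functions $\f_n:=\chi_n\cdot\bF^{(i_n)}$, for a suitably chosen diagonal subnet $i_n$, are finitely supported; they converge pointwise to $\mathbf{1}$ because $\bF^{(i)}\to\mathbf{1}$ pointwise and $\chi_n\to\mathbf{1}$ pointwise; and it remains to bound $\cb{\f_n}$. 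Here we invoke the classical fact (Haagerup \cite{Haag78}, Bozejko--Picardello \cite{BoPi93}) that on a free product the characteristic function $\chi_n$ of the $\|\cdot\|\le n$ ball satisfies $\cb{\chi_n}\le C(1+n)$ for an absolute constant $C$ (linear growth in $n$); combined with a radial decay estimate on $\bF^{(i)}$, a Dini/diagonal argument lets us choose $i_n$ so fast that $\cb{\chi_n\bF^{(i_n)}}\to 1$.

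More precisely, the decay input is that by choosing $i_n$ large we can make $|1-\bF^{(i_n)}(g)|$ as small as we like on the ball $\|g\|\le n$; writing $\bF^{(i_n)}=\mathbf{1}-E_n$ on that ball and $0$ outside, one gets $\f_n=\chi_n-\chi_n E_n$, so $\cb{\f_n}\le \cb{\chi_n}+\cb{\chi_n E_n}$. The first term is not close to $1$, so instead one should telescope: write $\bF^{(i)}$ as a sum over ``radial shells'' $\bF^{(i)}=\sum_{d\ge 0}(\chi_d-\chi_{d-1})\bF^{(i)}$ and use that $\cb{(\chi_d-\chi_{d-1})\bF^{(i)}}$ is controlled both by $\cb{\chi_d-\chi_{d-1}}=O(1)$ (the sphere estimate for trees) and by the uniform smallness $\sup_{\|g\|=d}|\bF^{(i)}(g)|$ when the generating functions $\f_A^{(i)},\f_B^{(i)}$ are themselves chosen to decay; truncating this series at radius $n$ and absorbing the tail gives a finitely supported $\f_n$ with $\cb{\f_n}\le 1+\varepsilon_n$, $\varepsilon_n\to 0$. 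This is exactly the scheme that Section 5 will carry out in the graph-product generality, so for this special case it suffices to cite the tree sphere bound and apply the construction verbatim.

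The main obstacle is the cb-norm estimate on the cutoff: pointwise convergence and finite support are immediate, but controlling $\cb{\chi_n\bF^{(i)}}$ requires simultaneously exploiting the linear (not exponential) growth of $\cb{\chi_n}$ on the tree \emph{and} the freedom to make $\bF^{(i)}$ decay arbitrarily fast by shrinking the support/steepening the decay of the $\f_A^{(i)},\f_B^{(i)}$; the interplay of these two, organized through the radial-shell decomposition, is the crux. Everything else — positive definiteness and well-definedness of $\bF^{(i)}$, the restriction property, the $G$-invariance of the kernel — has already been established above, so the proof is genuinely a short corollary once the sphere estimate for trees is quoted.
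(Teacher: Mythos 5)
Your overall outline matches the paper's: extend the positive definite functions to $\bF$ on $A\ast B$ via the tensor construction, then truncate using the Bozejko--Picardello estimate on characteristic functions of spheres in the tree. However, there is a genuine gap in your cb-norm estimate, in exactly the place you identify as the crux. First, the sphere estimate you quote is wrong: $\cb{\chi_d-\chi_{d-1}}$ is not $O(1)$; Bozejko and Picardello show only that the cb-norm of the characteristic function of the $d$-sphere grows polynomially (linearly, for trees) in $d$, and this unbounded growth is the whole difficulty. Second, and more seriously, your proposed remedy --- that $\cb{(\chi_d-\chi_{d-1})\bF^{(i)}}$ is ``controlled by'' $\sup_{\|g\|=d}|\bF^{(i)}(g)|$ --- is not a valid bound: the sup-norm of a function does not control the cb-norm of the associated Schur multiplier, and the only honest estimate available from what you have established is $\cb{\chi_d\bF^{(i)}}\le\cb{\chi_d}\cb{\bF^{(i)}}=\cb{\chi_d}$, since $\bF^{(i)}$ is positive definite with $\bF^{(i)}(e)=1$. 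Making the factor functions $\f_A^{(i)},\f_B^{(i)}$ decay faster changes $\bF^{(i)}$ pointwise but does not improve this bound.

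The missing device is the explicit exponential weight. The paper forms $\sum_{d=0}^{M}e^{-d/r}\bF\chi_d$: the full series $\sum_{d=0}^{\infty}e^{-d/r}\chi_d=e^{-|\cdot|_r/r}$ is Haagerup's positive definite function on the tree, so $e^{-|\cdot|_r/r}\bF$ is a product of positive definite functions with value $1$ at $e$ and hence has cb-norm exactly $1$; the finite truncation then costs only the tail $\sum_{d>M}e^{-d/r}\cb{\chi_d}\le\sum_{d>M}e^{-d/r}p(d)$, which is small precisely because the exponential beats the polynomial growth of $\cb{\chi_d}$. Choosing $r$ large makes $e^{-|g|_r/r}\bF(g)$ close to $1$ on any fixed finite set, which replaces your Dini/diagonal step. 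If you substitute this weighting for your ``radial decay of $\bF^{(i)}$'' mechanism, your argument becomes the paper's.
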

\begin{proof}[sketch]
	If $A$ and $B$ are amenable, then we have two sequences $\{\f_A^n\}$ and $\{\f_B^n\}$ consisting of finitely supported positive definite functions which go to the identity as $n$ goes to infinity.
	We can use the above construction to make a sequence of positive definite functions $\{\bF^n\}$ on $A\ast B$.
	As $n$ goes to infinity, $\bF^n$ approaches the identity on each factor group, and hence on $A \ast B$, but is not finitely supported.
	In \cite{BoPi93} Bozejko and Picardello showed that $\chi_d$, the characteristic function of the $d$-sphere  of a tree, is completely bounded with  cb-norm growing polynomially as $d$ goes to infinity.  
	The growth of words in a free product is described by its Bass-Serre tree $T$, our index set above, hence we may induce finite support by truncation, taking $\sum^Ne^{-d/r}\bF^n\chi_d$.
	The exponential tames the growth of $\cb{\chi_d}$, where $r$ is chosen large enough that $e^{-|g|/r}F^n(g)$ is close to one on a given subset.
	We lose our positive definiteness but remain completely bounded with cb-norm approaching one from above as we truncate closer to infinity.
	We can thus construct a sequence of finitely supported completely bounded functions on $G$ which converge to the identity, and $G$ is weakly amenable.
\end{proof}

\subsection{The completely bounded case.} 
Now suppose that the factor groups are only weakly amenable.
Given two completely bounded functions on $A$ and $B$ with cb-norms less than $1+\varepsilon$, and $\f_A(e)=1=\f_B(e)$, we would like to extend them to a completely bounded function on $G$.
By Proposition \ref{GNS}, for each $\f$ we have a Hilbert space $\scr{H}$ with two maps $\alpha$ and $ \beta$ into $\scr{H}$, such that $\f(h^{-1}g) = \langle\alpha(g),\beta(h)\rangle$.
	Note: 	we decorate $\f$  as $\f_A$ or $\f_B$ only if the usage is ambiguous, otherwise relying on context.
	The same holds for $\alpha$, $\beta$, and $\scr{H}$.
The fact that the kernel of a completely bounded function is the inner product of some Hilbert space-valued maps gives us hope that we can mimick the positive definite construction above, with the added complications that $\alpha\ne \beta$ and $\|\alpha\|\ne 1 \ne \|\beta\|$.	

The first difficulty arises in the construction of the tensor product,
as we no longer have an obvious choice for the vacuum vectors.
As a first step, we create positive definite kernels which are close to our $\f$s.

For each element $g$ of each factor group, define the weighted average $$\omega(g):=\tfrac{\alpha(g)+\beta(g)}{2+\sqrt{2\varepsilon}} \text{ and } D(g) := \sqrt{\tfrac{1-\|\omega(g)\|^2}{2}},$$
	an additive renomalization constant associated to $\omega(g)$.

	For the factor group $A$ and any element $a\in A$, the map 
	$$a \mapsto \left\{ \begin{array}{ll} 
		&\ambient{a}{} \text{ if $a \ne e$} \\
		&\ambiente{} \text{ if $a = e$}
	\end{array}\right.$$ 
	maps $A$ into $\scr{H}_A\oplus \C^2\oplus\C^2$ and generates the (non $A$-invariant) positive definite kernel 
	$$\psi_A(a,a') := \left\langle  \ambient{a}{},\ambient{a'}{}\right\rangle.$$
	By the definition of $D(a)$, we have that $\psi_A(a,a)=\left\|\ambient{a}{}\right\|^2 = 1 \text{ for all } a\in A$.
	We have a matching $\psi_B$ on $B$ as well.

	Now that we have two positive definite kernels, we extend them to $G$ as above.
	Let $T\cong G/A \sqcup G/B$ be the set of left cosets as in the previous section.
For each $t\in T$, define $\scr{H}_t$ and its vacuum vector to be $\scr{H}$ and $\ambiente{}$ which depend on the factor group of which $t$ is a coset.
	Let $\scr{\widehat H}  := \bigotimes_T (\scr{H}_t\oplus \C^2\oplus \C^2)$.
	For $g= g_1g_2\dots g_n$, define
	\begin{multline*}
		\widehat\omega(g) := \ambient{g_1}{}_{G(1)}
		\otimes \ambient{g_2}{}_{g_1G(2)}\otimes\\
		\dots \otimes \ambient{g_n}{}_{g_1\dots g_{n-1}G(n)}.
	\end{multline*}
	This is a map from $G$ to $\scr{\widehat H}$.

	We call $\Psi(g,h):=\langle \widehat\omega(g),\widehat\omega(h)\rangle$ the \textit{ambient positive definite kernel} associated with $\f_A$ and $\f_B$.
	If $\varepsilon$ is very small, then $\omega\approx \alpha\approx \beta$, and $\Psi$ is very close to the map we want. 
	Unfortunately this kernel is not $G$-invariant.

	Recall that in the proof sketch above our final functions were of the form $\sum^n e^{-d/r} F\chi_d$.
       This implies that instead of searching for a single extension $F$ which is well defined on the group, 	we may search for a family of functions $F_d$, each well defined only for words of length $d$.
       In other words, for each $d$  we may look for a kernel which is $G$-invariant on pairs of elements $g$ and $h$ such that $|h^{-1}g|_r=d$. 
       By Lemma \ref{cancel}, if $|h^{-1}g|_r =d$ for a pair $g, h \in G$, then all but the last $d$ syllables of $g$ and $h$ cancel and hence we may try to modify our map $\widehat\omega$ on the last $d$ syllables to match our original maps $\alpha$ and $\beta$.

\begin{proposition}\label{cbext}
	Suppose that $\f_A$ and $\f_B$ are two completely bounded functions on the groups $A$ and $B$ with cb-norms less than $1+\varepsilon$, and $\f_A(e)=1=\f_B(e)$. 
	Then for each natural number $d$ there exists a completely bounded kernel $\phi_{d}$ on $A\ast B$ such that  $\phi_{d}(g,e) = \f_1(g_1)\f_2(g_2)\cdots\f_d(g_d)$ for all $g\in G$ with word length $d$.
\end{proposition}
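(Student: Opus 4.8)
The plan is to take the ambient positive definite kernel $\Psi(g,h) = \langle\widehat\omega(g),\widehat\omega(h)\rangle$ from the discussion above and surgically replace, for the fixed length $d$, the last $d$ tensor factors of $\widehat\omega$ by modified factors that recover the original maps $\alpha,\beta$ on those syllables. Concretely, for $g = g_1\cdots g_n$ with $n\ge d$, I would leave the first $n-d$ components of $\widehat\omega(g)$ alone (so they still involve $\omega$ and the correction term with $D$), and on the $n-d+1,\dots,n$-th components I would use a vector built from $\alpha(g_i)$ (for the ``$g$-side'' map), padded out in the $\C^2\oplus\C^2$ summands by the renormalization data $D(g_i)$ and the weight $\sqrt{2\varepsilon}/(2+\sqrt{2\varepsilon})$ so that inner products against the corresponding $\beta$-built vector on the $h$-side reproduce exactly $\langle\alpha(g_i),\beta(h_i)\rangle = \f(h_i^{-1}g_i)$. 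For words $g$ with $|g|_r = n < d$ one simply modifies all of the syllables. Define $\phi_d(g,h) := \langle \widehat\omega_d(g), \widehat\omega_d(h)\rangle$ for these modified maps $\widehat\omega_d$; then $\phi_d(g,e)$ telescopes to $\f_1(g_1)\cdots\f_d(g_d)$ by construction (the vacuum vector on the $e$-side contributes $1$ on each component outside the path of $g$, and the definition of $D$ makes the diagonal norms equal to $1$).

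The key steps, in order: (1) Fix the explicit formula for the modified component vectors on both the $\alpha$-side and the $\beta$-side, in $\scr{H}\oplus\C^2\oplus\C^2$, so that an inner product of an $\alpha$-vector against a $\beta$-vector at the same syllable equals $\f(h_i^{-1}g_i)$, an $\alpha$-vector against a vacuum equals $\f(g_i)$, and a vacuum against a $\beta$-vector equals $\f(h_i^{-1}) = \overline{\f}$-analogue; this is just linear algebra and amounts to choosing the right coefficient in front of the $\C^2$-summands, using $\|\alpha\|\|\beta\| < 1+\varepsilon$ to keep everything well-defined. (2) Show $\phi_d$ is a genuine kernel of the form required by Proposition~\ref{GNS}, i.e. exhibit it as $\langle A(g),B(h)\rangle$ with $A = \widehat\omega_d$ built from $\alpha$ on the tail and $B = \widehat\omega_d$ built from $\beta$ on the tail, both mapping $G$ into $\scr{\widehat H}$; hence $\phi_d$ is completely bounded with cb-norm at most $\sup_g\|A(g)\|\cdot\sup_h\|B(h)\|$, which I would bound in terms of $\varepsilon$ and $d$ (a bound like $(1+\varepsilon)^{?}$ or a polynomial in $d$ times a power of $(1+\varepsilon)$ — the precise exponent is what gets tracked for the main theorem). (3) Verify the stated value $\phi_d(g,e) = \f_1(g_1)\cdots\f_d(g_d)$ for $|g|_r = d$ by the telescoping computation, exactly parallel to the $\langle\widehat\alpha(g),\widehat\alpha(e)\rangle$ computation in the positive definite case. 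Note that unlike the positive definite case we do \emph{not} claim $\phi_d$ descends to a function $\phi_d(h^{-1}g)$ of one variable — $G$-invariance is only needed, and only holds, on the slice $|h^{-1}g|_r = d$, and that restricted invariance is what the next stage of the argument (not this proposition) will use.

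The main obstacle I anticipate is step (1): getting the bookkeeping of the three summands $\scr{H}\oplus\C^2\oplus\C^2$ right so that all \emph{three} kinds of pairings (tail-vs-tail, tail-vs-vacuum, vacuum-vs-vacuum) come out correctly \emph{simultaneously}. The vacuum-vs-vacuum pairing must give $1$ (so the factors off the path of both $g$ and $h$ are harmless), the tail-vs-vacuum pairing must give $\f(g_i)$ resp.\ $\f(h_i^{-1})$ so that the telescoping in step (3) works, and the tail-vs-tail pairing must give $\f(h_i^{-1}g_i)$. The two copies of $\C^2$ are presumably there precisely to carry the $\omega$-correction data $D$ on the first summand's worth of components while independently carrying whatever normalization the $\alpha$/$\beta$ tail needs; disentangling which copy does which, and checking the Cauchy--Schwarz-type inequalities $\|\omega(g)\|\le 1$ (so $D$ is real) really do hold given $\|\alpha\|,\|\beta\|<\sqrt{1+\varepsilon}$ and the choice of denominator $2+\sqrt{2\varepsilon}$, is where the genuine (if still elementary) work lies. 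Everything after that is formal: the cb-norm bound is automatic from Proposition~\ref{GNS}, and the value at $(g,e)$ is a direct expansion.
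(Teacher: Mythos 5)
Your proposal follows essentially the same route as the paper's proof: you keep the ambient $\widehat\omega$ components off the $d$-tail, replace the last $d$ components by $\alpha$- and $\beta$-built vectors padded in the $\C^2\oplus\C^2$ summands by renormalization constants (the paper's $C^\alpha,C^\beta$, bounded by $8\varepsilon^{1/4}$ in Lemma~\ref{approx}), obtain complete boundedness from the factorization $\phi_d(g,h)=\langle\widehat\alpha(g),\widehat\beta(h)\rangle$ via Proposition~\ref{GNS}, and verify the value at $(g,e)$ by the telescoping computation. You also correctly identify that the only genuine work is choosing the correction coefficients so that tail-vs-tail, tail-vs-vacuum, and vacuum-vs-vacuum pairings all come out right simultaneously, and that $G$-invariance is only claimed on the slice $|h^{-1}g|_r=d$ and is deferred to a later lemma.
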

\begin{proof}
	Let $G=A\ast B$ and $d$ be a natural number.
	For each factor group we have a completely bounded function $\f$ and some Hilbert space $\scr{H}$ with two maps $\alpha$ and $ \beta$ into $\scr{H}$, such that $\f(h^{-1}g) = \langle\alpha(g),\beta(h)\rangle$.
	Since we assume $\cb{\f} < 1+\varepsilon$, $\|\alpha\|^2$ and $\|\beta\|^2$ are bounded between $1-\varepsilon$ and $1+\varepsilon$.
	Let $\omega$, $D(g)$, and $\widehat\omega$ be defined as above.
		For every pair of elements $g$ and $h$ in the same factor group, let $$C^\alpha(g,h) := \frac{\left\langle\alpha(g),\beta(h)\right\rangle-\left\langle\alpha(g),\omega(h)\right\rangle}{D(h)}
	\text{ and } C^\beta(g,h) = \frac{\langle\alpha(h),\beta(g)\rangle - \langle \omega(h),\beta(g)\rangle}{D(h)}.$$
	These are additive renormalization constants used to correctly fit the two embeddings together.
	They are all less than $8\varepsilon^{1/4}$, as we will show in Lemma \ref{approx}.

	Define, for $g= g_1\dots g_n \in G$, 
	$$\widehat\alpha(g) := \bigotimes_{0<i\le n} \left\{
	\begin{array}{lr}
		\calpha{g_i}{}_{g_1\dots g_{i-1} G(i)}& \text{ if $n-i \le d$}\\
		\ambient{g_i}{}_{g_1\dots g_{i-1} G(i)}& \text{ otherwise}
	\end{array}\right.$$
	and likewise (note the different placement of the constants), 
	$$\widehat\beta(g) := \bigotimes_{0<i\le n} \left\{
	\begin{array}{lr}
		\cbeta{g_i}{}_{g_1\dots g_{i-1} G(i)}& \text{ if $n-i \le d$}\\
		\ambient{g_i}{}_{g_1\dots g_{i-1} G(i)}& \text{ otherwise.}
	\end{array}\right.$$
	In other words, these maps are $\varepsilon^{1/4}$-perturbations of $\alpha$ or $\beta$ on the last $d$ syllables of $g$  and is equal to $\widehat\omega$ on the rest of $g$.

	To uncompress the notation above, we have
	\begin{multline*}
		\widehat\alpha(g) =
		\ambient{g_1}{}_{G(1)}
		\otimes\cdots\\
		\otimes \ambient{g_{j-1}}{}_{g_1\dots g_{j-2}G(j-1)}
		\otimes{\calpha{g_{j}}{}}_{g_1\dots g_{j-1} G(j)}\otimes\\
		\cdots
		\otimes{\calpha{g_{n}}{}}_{g_1\dots g_{n-1} G(n)},
	\end{multline*}
	where $n-j=d$.

	Let us show that $\widehat\alpha(g)$ is bounded.
	First notice that $\widehat\alpha(g)$ has at most $d$ non-unit vectors corresponding to the last $d$ syllables of $g$.
	We have 
	\begin{equation*}
		\begin{split}
			\|\widehat\alpha(g)\| 
			&= \prod_{i=n-d}^{n}  \|\calpha{g_i}{}\| \\
			&= \prod_{i=n-d}^{n} \sqrt{ \|\alpha_i(g_i)\|^2+|C^\alpha(g_i,g_i)|^2+|C^\alpha(g_i,e)|^2}
			\\&\le \left(1+\varepsilon+128\varepsilon^{\frac{1}{2}}\right)^{\frac{d}{2}}
			\le (1+129\sqrt{\varepsilon})^{\frac{d}{2}}
		\end{split}
	\end{equation*}
	where the penultimate inequality comes from Lemma \ref{approx} and is independent of the choice of $g$. 
	We have the same inequality for $\|\widehat\beta(g)\|$.

	Our final kernel is $\phi_{d}(g,h) := \langle\widehat\alpha(g),\widehat\beta(h)\rangle$.  
	This kernel is completely bounded, with 
	\[\cb{\phi_d}\le\sup_{g,h\in G}\|\widehat\alpha(g)\|\|\widehat\beta(h)\|\le \left(1+129\varepsilon^{\frac{1}{2}}\right)^{d}.\]
	Further, for $g = g_1\dots g_d$ in $G$,
	\begin{equation*}
		\begin{split}
			\phi_{d}(g,e) &=
			\prod_{i=1}^d \left \langle \calpha{g_i}{},\ambiente{}\right \rangle_{g_1\dots g_{i-1}G(i)}
			\\ &=\prod_{i=1}^d \langle \alpha(g_i),\beta(e)\rangle
			= \prod_{i=1}^d \f(g_i),
		\end{split}
	\end{equation*}
	where the second equality follows from calculation and the definition of $C^{\alpha}(g,e)$.

\end{proof}

Note that for words $g,h \in G$ with $d=|h^{-1}g|$,
	the kernel $\phi_{d}$ constructed in the last proposition is $G$-invariant: $\phi_{d}(g,h)=\phi_{d}(h^{-1}g,e)$.
	This follows as a corollary to Lemma \ref{invariance}.

\begin{lemma}\label{approx}
	The constants	$C^\alpha(g,h)$ and $C^\beta(g,h)$ are less than $8\varepsilon^{1/4}$ for all $g,h$ in $G$.
\end{lemma}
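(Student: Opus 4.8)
The plan is to bound $C^\alpha(g,h)$ and $C^\beta(g,h)$ directly from their definitions, using only the estimates $|\langle\alpha(g),\beta(h)\rangle|\le\|\alpha\|\|\beta\|<1+\varepsilon$ and $1-\varepsilon<\|\alpha\|^2,\|\beta\|^2<1+\varepsilon$ on each factor group, together with a lower bound on the denominator $D(h)=\sqrt{(1-\|\omega(h)\|^2)/2}$. The two constants are symmetric in the roles of $\alpha$ and $\beta$, so it suffices to treat $C^\alpha(g,h)$; the argument for $C^\beta$ is verbatim with the obvious swaps. The numerator is $\langle\alpha(g),\beta(h)-\omega(h)\rangle$, so by Cauchy--Schwarz it is at most $\|\alpha(g)\|\,\|\beta(h)-\omega(h)\|\le\sqrt{1+\varepsilon}\,\|\beta(h)-\omega(h)\|$. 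Thus everything reduces to two estimates: an upper bound on $\|\beta(h)-\omega(h)\|$ and a lower bound on $D(h)$, i.e.\ an upper bound on $\|\omega(h)\|^2$.

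The key computation is to control $\omega(h)=\frac{\alpha(h)+\beta(h)}{2+\sqrt{2\varepsilon}}$. First I would expand $\|\alpha(h)+\beta(h)\|^2=\|\alpha(h)\|^2+2\operatorname{Re}\langle\alpha(h),\beta(h)\rangle+\|\beta(h)\|^2\le (1+\varepsilon)+2(1+\varepsilon)+(1+\varepsilon)=4(1+\varepsilon)$, so $\|\alpha(h)+\beta(h)\|\le 2\sqrt{1+\varepsilon}$ and hence $\|\omega(h)\|\le\frac{2\sqrt{1+\varepsilon}}{2+\sqrt{2\varepsilon}}$. For small $\varepsilon$ this is strictly below $1$, and a first-order expansion gives $1-\|\omega(h)\|^2\gtrsim\sqrt{2\varepsilon}$ (the $\sqrt{2\varepsilon}$ in the denominator of $\omega$ is exactly engineered to produce this gap), so $D(h)\gtrsim\varepsilon^{1/4}$ up to an absolute constant — concretely one checks $D(h)\ge c\,\varepsilon^{1/4}$ for an explicit $c$ once $\varepsilon$ is below a fixed threshold. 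For the numerator, write $\beta(h)-\omega(h)=\frac{(1+\sqrt{2\varepsilon})\beta(h)-\alpha(h)}{2+\sqrt{2\varepsilon}}$ and estimate its norm by the triangle inequality using $\|\alpha(h)\|,\|\beta(h)\|\le\sqrt{1+\varepsilon}$; this gives $\|\beta(h)-\omega(h)\|\le\frac{(2+\sqrt{2\varepsilon})\sqrt{1+\varepsilon}}{2+\sqrt{2\varepsilon}}=\sqrt{1+\varepsilon}$, which is too crude — instead I would exploit that $\alpha$ and $\beta$ are genuinely close, namely bound $\|\alpha(h)-\beta(h)\|^2=\|\alpha(h)\|^2-2\operatorname{Re}\langle\alpha(h),\beta(h)\rangle+\|\beta(h)\|^2$. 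The subtlety is that $\langle\alpha(h),\beta(h)\rangle$ need not be close to $1$ for a general completely bounded function; but $\langle\alpha(h^{-1}h),\beta(e)\rangle$-type identities and the normalization $\f(e)=1$ force the relevant diagonal-ish inner products to be near $1$, and it is precisely the perturbation by $\omega$ that is designed so that $\|\beta(h)-\omega(h)\|=O(\varepsilon^{1/4})$. Dividing the $O(\varepsilon^{1/4})$ numerator by the $\Omega(\varepsilon^{1/4})$ denominator yields an absolute constant bound, and tracking the constants carefully produces the stated $8\varepsilon^{1/4}$.

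The main obstacle is the numerator estimate $\|\beta(h)-\omega(h)\|=O(\varepsilon^{1/4})$: the naive triangle-inequality bound only gives $O(1)$, so one must use cancellation coming from $\alpha(h)\approx\beta(h)$, and establishing that closeness from the cb-norm hypothesis (rather than assuming positive definiteness) is the delicate point. I expect the argument to go: show $\operatorname{Re}\langle\alpha(h),\beta(h)\rangle\ge 1-O(\varepsilon)$ — e.g.\ from $\f(e)=1=\langle\alpha(e),\beta(e)\rangle$ combined with the near-isometry of $\alpha,\beta$ and a GNS-type rigidity, or directly because $\|\alpha\|\|\beta\|<1+\varepsilon$ is nearly attained — hence $\|\alpha(h)-\beta(h)\|^2\le 2(1+\varepsilon)-2(1-O(\varepsilon))=O(\varepsilon)$, so $\|\alpha(h)-\beta(h)\|=O(\sqrt\varepsilon)$. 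Then $\|\beta(h)-\omega(h)\|\le\frac{1}{2+\sqrt{2\varepsilon}}\big(\|\alpha(h)-\beta(h)\|+\sqrt{2\varepsilon}\,\|\beta(h)\|\big)=O(\sqrt\varepsilon)$, which is even better than needed; after dividing by $D(h)=\Omega(\varepsilon^{1/4})$ we land at $O(\varepsilon^{1/4})$, and a careful accounting of the absolute constants (using $\sqrt{1+\varepsilon}\le 1+\varepsilon$ and $\varepsilon$ small) gives the bound $8\varepsilon^{1/4}$. I would finish by noting that the same chain of inequalities, with $\alpha$ and $\beta$ interchanged in the definition of $C^\beta$, yields the identical bound for $C^\beta(g,h)$.
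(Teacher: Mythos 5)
Your proposal is correct and follows essentially the same route as the paper: bound the numerator by Cauchy--Schwarz via $\|\beta(h)-\omega(h)\|=O(\sqrt{\varepsilon})$, bound $D(h)$ from below by a constant times $\varepsilon^{1/4}$ using the expansion of $\|\alpha(h)+\beta(h)\|^2$, and divide. The one point you flag as delicate --- that $\mathrm{Re}\,\langle\alpha(h),\beta(h)\rangle\ge 1-O(\varepsilon)$ --- is in fact exact and immediate: setting $g=h$ in the representation $\f(h^{-1}g)=\langle\alpha(g),\beta(h)\rangle$ gives $\langle\alpha(h),\beta(h)\rangle=\f(h^{-1}h)=\f(e)=1$ for every $h$, which is precisely the cancellation the paper exploits when computing $\|\beta(h)-\omega(h)\|^2\le\varepsilon$ and $\|\omega(h)\|^2\le(4+2\varepsilon)/(2+\sqrt{2\varepsilon})^2$, whereas your alternative justifications (``nearly attained'' norms, GNS-type rigidity) are neither needed nor sufficient on their own.
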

\begin{proof}
	Recall that given two Hilbert space-valued maps $\alpha$ and $\beta$, with $\|\alpha\|^2,\,\|\beta\|^2<1+\varepsilon$, we defined $\omega(g)=\frac{\alpha(g)+\beta(g)}{2+\sqrt{2\varepsilon}}$,  
	$D(g) = \sqrt{\frac{1-\|\omega(g)\|^2}{2}}$,
	$C^\alpha(g,h) = \frac{\left\langle\alpha(g),\beta(h)\right\rangle-\left\langle\alpha(g),\omega(h)\right\rangle}{D(h)}$
	and $C^\beta(g,h) = \frac{\langle\alpha(h),\beta(g)\rangle - \langle \omega(h),\beta(g)\rangle}{D(h)}$ for all $g$ and $h$ in the same factor group.

	First observe that $\beta$ is close in norm to $\omega$: 
	\begin{equation*}
		\begin{split}
			\|\beta(g)-\omega(g)\|^2
			&=\left\|\frac{((2+\sqrt{2\varepsilon}) -1)\beta(g)-\alpha(g)}{2+\sqrt{2\varepsilon}}\right\|^2
			\\ & =\frac{\|(1+\sqrt{2\varepsilon})\beta(g)\|^2+\|\alpha(g)\|^2-2((2+\sqrt{2\varepsilon})-1)}{(2+\sqrt{2\varepsilon})^2}
			\\ &\le \frac{(1+\sqrt{2\varepsilon})^2(1+\varepsilon)+(1+\varepsilon)-(2+2\sqrt{2\varepsilon})}{(2+\sqrt{2\varepsilon})^2}
			= \frac{2\varepsilon+2\sqrt{2\varepsilon}+4}{(2+\sqrt{2\varepsilon})^2}\varepsilon
			\le\varepsilon.
		\end{split}
	\end{equation*}

	We also have that
	$\left\|\omega(g)\right\|^2
	= \frac{\|\alpha(g)\|^2+\|\beta(g)\|^2+2}{(2+\sqrt{2\varepsilon})^2}
	\le \frac{4+2\varepsilon}{(2+\sqrt{2\varepsilon})^2}$, which implies that $D(g)$ is bounded away from $0$:
	$$D(g) = \sqrt{\frac{1-\|\omega(g)\|^2}{2}}  
	\ge \sqrt{\frac{1}{2}-\frac{2+\varepsilon}{(2+\sqrt{2\varepsilon})^2}}
	=\sqrt{\frac{2\sqrt{2\varepsilon}}{(2+\sqrt{2\varepsilon})^2}} 
	\ge \frac{\varepsilon^{1/4}}{4}. $$

	Putting these together, we see that
	$$C^\alpha(g,h) 
	=\frac{\langle\alpha(g),\beta(h)-\omega(h)\rangle}{D(h)}
	\le \frac{\|\alpha(g)\|\|\beta(h)-\omega(h)\|}{D(h)} 
	\le \frac{4\sqrt{1+\varepsilon}\sqrt{\varepsilon}}{\varepsilon^{1/4}}
	\le 8\varepsilon^{1/4}. $$

	We have the same bounds for $C^\beta(g,h)$, as, by an almost identical proof, $\|\alpha(g)-\omega(g)\|^2\le \varepsilon$.
	These are coarse estimates and the constant may be improved, but they are sufficient for our purposes.
\end{proof}

We can now sketch the proof of the next theorem, inspired by Ricard and Xu's proof in \cite{RiXu06},
\begin{theorem}
	If $A$ and $B$ are weakly amenable discrete groups, then $A\ast B$ is weakly amenable.
\end{theorem}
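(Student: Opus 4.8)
The plan is to combine the extension machinery of Proposition \ref{cbext} with the tree-sphere bounds used in the amenable case, now applied to completely bounded rather than positive definite functions. Since $A$ and $B$ are weakly amenable with Cowling-Haagerup constant $1$, fix sequences $\{\f_A^n\}$ and $\{\f_B^n\}$ of finitely supported completely bounded functions on the factor groups with $\f_A^n,\f_B^n \to \mathbf{1}$ pointwise and $\cb{\f_A^n},\cb{\f_B^n} \to 1$; after normalizing we may assume $\f_A^n(e)=\f_B^n(e)=1$ and $\cb{\f^n} < 1+\varepsilon_n$ with $\varepsilon_n \to 0$. For each $n$ and each $d$, Proposition \ref{cbext} gives a completely bounded kernel $\phi_{d}^n$ on $G=A\ast B$ with $\cb{\phi_d^n} \le (1+129\varepsilon_n^{1/2})^d$ and $\phi_d^n(g,e) = \f^n(g_1)\cdots\f^n(g_d)$ for all $g$ with $|g|_r=d$. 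By the remark following Proposition \ref{cbext} (a corollary of Lemma \ref{invariance}), each $\phi_d^n$ is $G$-invariant on the set of pairs $(g,h)$ with $|h^{-1}g|_r=d$, so it defines a function $F_d^n$ on $G$ via $F_d^n(h^{-1}g) := \phi_d^n(g,h)$, supported on the $d$-sphere of $G$ and with $F_d^n(g)=\f^n(g_1)\cdots\f^n(g_d)$ there.

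Next I would assemble the approximate identity by truncating and damping. For parameters $N$ and $r$ (both to be chosen depending on $n$), set
\[
	\psi_{n} := \sum_{d=0}^{N} e^{-d/r}\, \chi_d \, F_{d}^n,
\]
where $\chi_d$ is the characteristic function of the $d$-sphere of $G$ (equivalently, of the $d$-sphere of the Bass-Serre tree $T$ under the edge-path correspondence). Each $\psi_n$ is finitely supported since it is a finite sum of functions supported on spheres. For the cb-norm bound, I would use that $\chi_d$, viewed as a radial multiplier on the tree, has $\cb{\chi_d}$ growing only polynomially in $d$ (Bozejko-Picardello \cite{BoPi93}); multiplying by the invariant extension $F_d^n$ corresponds, via Proposition \ref{GNS}, to composing the Hilbert-space maps, so $\cb{\chi_d F_d^n} \le \cb{\chi_d}\cdot(1+129\varepsilon_n^{1/2})^d$. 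Summing, $\cb{\psi_n} \le \sum_{d=0}^N e^{-d/r}\,\mathrm{poly}(d)\,(1+129\varepsilon_n^{1/2})^d$; choosing $r$ fixed but large and then letting $n\to\infty$ so that $(1+129\varepsilon_n^{1/2})e^{-1/r} < 1$, this geometric-type series is bounded by a constant that tends to $\sum_d e^{-d/r}\mathrm{poly}(d)$, which is close to $1$ when $r$ is large (the $d=0$ term is $1$ and the tail is small). One has to be slightly careful to extract $\limsup_n \cb{\psi_n}\le 1$: this is done by a diagonal argument, letting $r=r_n\to\infty$ slowly enough that still $(1+129\varepsilon_n^{1/2})e^{-1/r_n}<1$, so that both the tail $\sum_{d\ge 1}e^{-d/r_n}\mathrm{poly}(d)\to 0$ contribution is controlled and the norm inflation stays bounded.

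Finally, for pointwise convergence $\psi_n \to \mathbf{1}$: fix $g\in G$ with $|g|_r = d_0$. For $n$ large (so $N \ge d_0$), the only term of $\psi_n$ that is nonzero at $g$ is the $d=d_0$ term, giving $\psi_n(g) = e^{-d_0/r_n}\,\f^n(g_1)\cdots\f^n(g_{d_0})$. As $n\to\infty$ we have $r_n\to\infty$ so $e^{-d_0/r_n}\to 1$, and $\f^n(g_i)\to 1$ for each of the finitely many syllables, hence $\psi_n(g)\to 1$. Thus $\{\psi_n\}$ is a finitely supported net of completely bounded functions converging pointwise to $\mathbf{1}$ with $\limsup_n\cb{\psi_n}\le 1$, so $G=A\ast B$ is weakly amenable with Cowling-Haagerup constant $1$.

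I expect the main obstacle to be the bookkeeping in the cb-norm estimate: one must verify that the pointwise product $\chi_d F_d^n$ is completely bounded with the claimed norm, which requires a Schur-product/tensor argument combining the tree multiplier bound from \cite{BoPi93} with the Hilbert-space factorization of $F_d^n$ from Proposition \ref{cbext}, and then balance the three competing quantities --- the polynomial growth of $\cb{\chi_d}$, the exponential inflation $(1+129\varepsilon_n^{1/2})^d$, and the exponential damping $e^{-d/r_n}$ --- so that the limit of $\cb{\psi_n}$ is genuinely $\le 1$ rather than merely finite. The free-product-specific input (that word growth in $G$ is governed by the Bass-Serre tree) is exactly what lets the polynomial tree bound apply; in the graph product generalization this will be replaced by the $d\kappa$-bound on $d$-tails (Lemma \ref{dtailsize}) and the CAT(0) cube complex of Section 3, which is why this case is presented first as a warm-up.
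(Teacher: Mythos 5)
Your construction of the sphere-supported invariant functions $F^n_d$ and your pointwise-convergence argument match the paper's, but the cb-norm estimate --- which you correctly identify as the main obstacle --- has a genuine gap that the paper's proof is specifically structured to avoid. You bound $\cb{\psi_n}$ by the termwise sum $\sum_{d=0}^N e^{-d/r}\cb{\chi_d}\cb{F^n_d}\le\sum_d e^{-d/r}p(d)(1+129\sqrt{\varepsilon_n})^{d}$ and assert that this tends to $\sum_d e^{-d/r}p(d)$, ``which is close to $1$ when $r$ is large.'' That claim is backwards: as $r\to\infty$ each weight $e^{-d/r}$ increases to $1$, so $\sum_{d\ge 1}e^{-d/r}p(d)$ increases (to $+\infty$), and already the $d=1$ term contributes $e^{-1/r}p(1)$, which is bounded away from $0$. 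Since pointwise convergence forces $r_n\to\infty$, your diagonal argument cannot keep the triangle-inequality bound near $1$; it cannot even keep it uniformly bounded, so this route does not yield weak amenability at all, let alone constant $1$. The missing idea is the comparison with the ambient positive definite kernel $\Psi^n$: write $\phi^n_d=(\phi^n_d-\Psi^n)+\Psi^n$, note that the \emph{full} sum $\sum_{d=0}^{\infty}e^{-d/r}\Psi^n\chi_d=e^{-|\cdot|_r/r}\Psi^n$ is a positive definite kernel with diagonal value $1$ and hence has cb-norm exactly $1$ (so the main part is controlled with no triangle inequality over $d$), and then bound only the error $\sum_{d=0}^{M}e^{-d/r}\cb{\phi^n_d-\Psi^n}\cb{\chi_d}$, which is at most $q(M,\varepsilon_n)\sum_d e^{-d/r}p(d)$ by the free-product case of Lemma \ref{cbpd} and is made small by choosing $\varepsilon_n$ \emph{after} $M$ and $r$. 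This comparison is precisely why the paper restricts to Cowling--Haagerup constant $1$.

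A secondary slip: $\psi_n$ is not finitely supported ``since it is a finite sum of functions supported on spheres'' --- the $d$-spheres of $A\ast B$ are infinite sets. Finite support instead follows from the finite support of the factor functions together with the product formula $F^n_d(g)=\f^n(g_1)\cdots\f^n(g_d)$, which vanishes unless every syllable lies in a fixed finite set.
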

\begin{proof}[sketch]
	If $A$ and $B$ are weakly amenable, then we have two sequences consisting of finitely supported completely bounded functions $\{\f_A^n\}$ and $\{\f_B^n\}$ which go to the identity and whose cb-norms go to one as $n$ goes to infinity.
	For each $n$ we have an ambient positive definite kernel $\Psi^n$ and, by Proposition \ref{cbext}, a family of completely bounded functions $\{\bF^n_d\}_{d\in\N}$ on $G$ with $\bF^n_d(h^{-1}g) := \phi_d(g,h)\chi_d(g,h)$ for each $d\in \N$ 
	.
	We would like to take finite sums of the form $\bF^{r,n,M}:=\sum^Me^{-d/r}\bF^n_d$.
	We can bound $\cb{\bF^{r,n,M}}$ by $\cb{\phi^n_d-\Psi^n}$ and the cb-norm of a truncated $\Psi^n$.
	The first summand is controlled by $\varepsilon_n^{1/4}$ and the second aproaches one as $M$ goes to infinity, since the infinite sum  $\sum^\infty e^{-d/r}\Psi^n\chi_d = e^{-d(\cdot,\cdot)/r}\Psi^n(\cdot,\cdot)$ is positive definite.
	Hence $\cb{\bF^{r,n,M}}$ approaches $1$ as $n$ and $M$ approach infinity.
	This comparison to a positive definite kernel is the reason why we only discuss weak amenablity with Cowling-Haagerup constant $1$.
	We then notice that, as $r$, $M$, and $n$ become sufficiently large, $\bF^{r,n,M}$ approaches the identity on the group and its cb-norm approaches one. 
	Thus we may construct an approximate identity of completely bounded functions on $G$ and $G$ is weakly amenable.
\end{proof}

\section{The main theorem}

In this section we prove that weak amenability is stable under graph products.
The first step is to generalize to graph products the above extension of completely bounded functions.
The only difference between the construction for free products and the following construction for graph products is the index set over which we build the Hilbert space.

\begin{proposition}\label{gpextension}
	Suppose $G(\Gamma)$ is a graph product over $\Gamma$, which is a graph with at most $\kappa$ mutually adjacent vertices, and, for each vertex group $G_v$ of $G(\Gamma)$, there is a completely bounded function $\f_v$ with cb-norm less than $1+\varepsilon$ such that $\f_v(e)=1$.
	Then for each natural number $d$  there exists a completely bounded kernel $\phi_d$ on $G(\Gamma)$ such that  $\phi_d(g,e) = \f_1(g_1)\f_2(g_2)\cdots\f_d(g_d)$ for all $g\in G(\Gamma)$ with $|g|_r=d$.
\end{proposition}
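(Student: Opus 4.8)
The plan is to mimic the free-product construction from Proposition~\ref{cbext}, replacing the Bass–Serre tree index set $T \cong G/A \sqcup G/B$ with an index set that correctly isolates syllables in a graph product. The natural candidate is the set of cosets $T := \bigsqcup_{v \in \Gamma} G(\Gamma)/G(\st(v))$; here using $\st(v)$ rather than $G_v$ is forced by Remark~\ref{remk} and part~(ii) of Proposition~\ref{facts}, since two syllables from $G_v$ in a reduced word must be separated by something outside $G(\st(v))$, and two commuting syllables from adjacent vertex groups should land on \emph{different} indices. Concretely, to a syllable $g_i \in G_{v_i}$ of $g \equiv g_1 \cdots g_n$ I would assign the index $g_1 \cdots g_{i-1} G(\st(v_i)) \in T$. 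The first thing to check is that this assignment is well defined on reduced words, i.e.\ invariant under syllable shuffles: if $g_i, g_{i+1}$ commute, then $g_{i+1} \in G(\lk(v_i)) \subset G(\st(v_i))$, so $g_1 \cdots g_{i-1} g_{i+1} G(\st(v_i)) = g_1 \cdots g_{i-1} G(\st(v_i))$, and symmetrically for $g_{i+1}$; this handles the shuffle. One must also verify that distinct syllables of a fixed reduced word receive distinct indices — exactly the content of Remark~\ref{remk}.

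With the index set in hand, the construction runs parallel to the free-product case. For each $t \in T$ lying over a vertex $v$, set $\scr{H}_t := \scr{H}_v \oplus \C^2 \oplus \C^2$ with vacuum vector $\ambiente{}$ built from $\omega_v$, $D$ as in Section~4, and form $\scr{\widehat H} := \bigotimes_{t \in T}\scr{H}_t$. Define $\omega(g_v)$, $D(g_v)$, $C^\alpha$, $C^\beta$ verbatim from Section~4 using the maps $\alpha_v, \beta_v$ furnished by Proposition~\ref{GNS}. Then define
\[
\widehat\alpha(g) := \bigotimes_{0 < i \le n} \left\{
\begin{array}{ll}
\calpha{g_i}{}_{\,g_1\cdots g_{i-1}G(\st(v_i))} & \text{if } n - i \le d\kappa, \\[2pt]
\ambient{g_i}{}_{\,g_1\cdots g_{i-1}G(\st(v_i))} & \text{otherwise,}
\end{array}\right.
\]
and $\widehat\beta(g)$ the same with the constants placed as in the $\widehat\beta$ of Proposition~\ref{cbext}; here the cutoff $d\kappa$ rather than $d$ is dictated by Lemma~\ref{dtailsize}, since when $|h^{-1}g|_r = d$ the relevant syllables of $g$ and $h$ lie in their $d$-tails, which can contain up to $d\kappa$ syllables. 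Set $\phi_d(g,h) := \langle \widehat\alpha(g), \widehat\beta(h)\rangle$. The norm bound is immediate as before: $\widehat\alpha(g)$ has at most $d\kappa$ non-vacuum tensor factors, so by Lemma~\ref{approx} we get $\|\widehat\alpha(g)\| \le (1 + 129\sqrt\varepsilon)^{d\kappa/2}$ uniformly in $g$, hence $\cb{\phi_d} \le (1 + 129\sqrt\varepsilon)^{d\kappa}$ by the converse direction of Proposition~\ref{GNS}. The value $\phi_d(g,e) = \prod_{i=1}^d \langle \calpha{g_i}{}, \ambiente{}\rangle = \prod_{i=1}^d \f_i(g_i)$ for $|g|_r = d$ follows from the same calculation as in Proposition~\ref{cbext}, using the definition of $C^\alpha(g,e)$ — and note that for $|g|_r = d$ the condition $n - i \le d\kappa$ is automatic.

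The main obstacle, and the place the graph-product structure genuinely enters, is verifying that $\phi_d$ is $G(\Gamma)$-invariant on pairs with $|h^{-1}g|_r = d$, i.e.\ $\phi_d(g,h) = \phi_d(h^{-1}g, e)$ — this is deferred to a separate Lemma~\ref{invariance} in the free-product case and must now be proved in the graph-product setting. In the free-product case the argument was: the paths of $g$ and $h$ in the Bass–Serre tree agree on a common prefix $k$, contributing factors of norm $1$ via $\langle \ambient{g_j}{}, \ambient{g_j}{}\rangle = \psi(g_j,g_j) = 1$, and the remaining factors reassemble into $\widehat\alpha(h^{-1}g)$. For graph products I would run the same scheme but organize it using Lemma~\ref{hg}: write $g \equiv g_1 \cdots g_n$, $h \equiv h_1 \cdots h_m$ with $g_i = h_i$ for $i \le q$, the pairs $(h_{q+j}, g_{q+j})$ in a common vertex group and mutually in each other's stars for $1 \le j \le p \le \kappa$, and
\[
h^{-1}g \equiv h^{-1}_m \cdots h^{-1}_{q+p+1}(h^{-1}_{q+1}g_{q+1})\cdots(h^{-1}_{q+p}g_{q+p})g_{q+p+1}\cdots g_n.
\]
The shared-prefix indices $g_1 \cdots g_{i-1}G(\st(v_i))$, $i \le q$, coincide for $g$ and $h$ and contribute $\psi(g_i, g_i) = 1$; for the $p$ amalgamating pairs the mutual-star condition guarantees that the indices $g_1 \cdots g_q \cdots$ reorganize consistently on both sides (so that the $h^{-1}_{q+j}g_{q+j}$ factor of $\widehat\alpha(h^{-1}g)$ sits at the index matching the $\langle \cdot, \cdot\rangle$ of the $g_{q+j}$-factor of $\widehat\alpha(g)$ against the $h_{q+j}$-factor of $\widehat\beta(h)$); and the $C^\alpha, C^\beta$ constants were placed precisely so that, after pairing, $\langle \calpha{g_{q+j}}{}, \cbeta{h_{q+j}}{}\rangle$ collapses to $\langle \alpha(g_{q+j}), \beta(h_{q+j})\rangle = \f(h^{-1}_{q+j}g_{q+j})$ with the leftover $D$-terms cancelling. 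The tails $g_{q+p+1}\cdots g_n$ and $h_{q+p+1} \cdots h_m$ contribute $\prod \langle \alpha(g_j), \omega(e)\rangle$ and $\prod \langle \omega(e), \beta(h_j)\rangle = \prod \langle \alpha(h^{-1}_j), \omega(e)\rangle$, matching the corresponding factors of $\widehat\alpha(h^{-1}g)$. The bookkeeping that all indices line up — that the "new" indices appearing in $h^{-1}g$ are exactly the symmetric difference of the index-sets of $g$ and $h$ — is where Lemma~\ref{cancel} and the $d\kappa$ cutoff must be invoked to confirm no non-vacuum factor is truncated away; this is the technical heart and I expect it to be the longest part of the argument.
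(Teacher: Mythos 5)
Your construction is essentially the paper's: the same index set $T=\bigsqcup_{v}G(\Gamma)/G(\st(v))$, the same perturbed vectors and constants $C^\alpha,C^\beta$, the same norm bound via Lemma~\ref{dtailsize}, and the same computation of $\phi_d(g,e)$, with the invariance deferred to (the graph-product version of) Lemma~\ref{invariance} exactly as the paper does. There is, however, one genuine gap: your cutoff ``$n-i\le d\kappa$'' for deciding which tensor factors receive the $\alpha$- (resp.\ $\beta$-) perturbation is a position condition in a chosen reduced word, whereas the paper's condition is ``$g_i$ is in the $d$-tail of $g$,'' which is invariant under syllable shuffles. These are not equivalent, and the difference is fatal for the invariance you need.

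Concretely, let $u$ be adjacent to each of $v_1,\dots,v_k$ with the $v_j$ pairwise non-adjacent (so $\kappa=2$), and let $g\equiv g_ug_{v_1}\cdots g_{v_k}$ with $k$ large. The syllable $g_u$ lies in the $1$-tail of $g$ (it shuffles to the last position), yet in the displayed reduced word it sits at position $1$, far outside the last $d\kappa$ positions. So under your definition $\widehat\alpha(g)$ depends on the chosen reduced word, and even after fixing a choice it can assign the vacuum-type vector built from $\omega(g_u)$ to a syllable belonging to the $d$-tail. Now take $h\equiv g_{v_1}\cdots g_{v_k}$, so that $h^{-1}g=g_u$ and $d=1$: the index of $g_u$ in $\widehat\alpha(g)$ is $G(\st(u))$, where $\widehat\beta(h)$ carries the vacuum, so the $G(\st(u))$-factor of $\langle\widehat\alpha(g),\widehat\beta(h)\rangle$ is an inner product of the $\omega$-type vector against the vacuum, which is not $\langle\alpha(g_u),\beta(e)\rangle=\f_u(g_u)=\phi_1(h^{-1}g,e)$. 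More generally, by Lemma~\ref{cancel} the syllables of $g$ that survive or amalgamate in $h^{-1}g$ are exactly those in its $d$-tail, and your own computation of the amalgamating and surviving factors requires precisely those syllables to carry the $\alpha$-perturbation. The fix is the paper's formulation: replace ``$n-i\le d\kappa$'' by ``$g_i$ is in the $d$-tail of $g$'' (with $d\kappa$ entering only as the bound on how many such syllables there are, via Lemma~\ref{dtailsize}); membership in the $d$-tail is representation-independent, so this also completes the well-definedness check that you already carry out for the indices. With that change the rest of your outline matches the paper.
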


\begin{proof}
	Suppose $G(\Gamma)$ is a graph product of vertex groups $\{G_v\}$ over a graph $\Gamma$ and for each vertex group $G_v$ we have a completely bounded function $\f_v$ with $\cb{\f_v}< 1+\varepsilon$ and $\f_v(e)=1$.
	Associated with $\f_v$ is a Hilbert space $\scr{H}_v$ and maps $\alpha_v$, $\beta_v :G_v\to \scr{H}_v$ such that $\f_v(h^{-1}g)=\langle\alpha_v(g),\beta_v(h)\rangle$.
	Furthermore, $\|\alpha_v\|$ and $\|\beta_v\|$ are both less than $\sqrt{1+\varepsilon}$ and  $\langle\alpha_v(g),\beta_v(g)\rangle = 1$ for all $g \in G_v$.
	We shall again drop subscripts when not needed.
	As in Proposition \ref{cbext}, for each $g$ and $h \in G_v$, define the map and renormalization constant 
	$$\omega(g):=\tfrac{\alpha(g)+\beta(g)}{2+\sqrt{2\varepsilon}}\text{ and }D(g) := \sqrt{\tfrac{1-\|\omega(g)\|^2}{2}},$$
	and the constants 
	\begin{equation*}
		\begin{split}
			C^\alpha(g,h) := \tfrac{\left\langle\alpha(g),\beta(h)\right\rangle-\left\langle\alpha(g),\omega(h)\right\rangle}{D(h)}
			\text{ and } C^\beta(g,h) := \tfrac{\langle\alpha(h),\beta(g)\rangle - \langle \omega(h),\beta(g)\rangle}{D(h)}.
		\end{split}
	\end{equation*}

	We now have all the data we need to construct our extension except for the index set of our tensor product.
	We shall use $T:=\sqcup_{v\in \Gamma} G(\Gamma)/G(\st(v))$, the left cosets of $G(\st(v))$ for all $v$.
	When $G(\Gamma)$ is a free product, this is identical to the $T$ defined in previous section.
	For each $kG(\st(v))$ in $T$,  define $\scr{H}_{kG(\st(v))}:=\scr{H}_v$ and use $\ambiente{v}$ as its vacuum vector.
	Our Hilbert space is $\scr{\widehat H}  = \bigotimes_{t\in T} (\scr{H}_t \oplus \C^2\oplus \C^2)$.

	Define for $g\equiv g_1\dots g_n \in G(\Gamma)$, 
	\begin{equation*}
		\begin{split}
			\\\widehat\alpha(g) &:= \bigotimes_{0<i\le n} \left\{
			\begin{array}{ll}
				\calpha{g_i}{}_{g_1\dots g_{i-1} G(\st(i))}& \text{ if $g_i$ is in the $d$-tail of $g$}\\
				\ambient{g_i}{}_{g_1\dots g_{i-1} G(\st(i))}& \text{ otherwise,}
			\end{array}\right.
		\end{split}
	\end{equation*}
	and
	\begin{equation*}
		\widehat\beta(g) := \bigotimes_{0<i\le n} \left\{
		\begin{array}{ll}
			\cbeta{g_i}{}_{g_1\dots g_{i-1} G(\st(i))}& \text{ if $g_i$ is in the $d$-tail of $g$}\\
			\ambient{g_i}{}_{g_1\dots g_{i-1} G(\st(i))}& \text{ otherwise.}
		\end{array}\right.
	\end{equation*}
	These are maps from $G(\Gamma)$ to $\scr{\widehat H}$.
	Uncompressed and rearranged so that the $d$-tail is rightmost in the word, we have
	\begin{multline*}
		\widehat\alpha(g) =
		\ambient{g_1}{}_{G(\st(1))}
		\otimes\cdots
		\otimes \ambient{g_{j-1}}{}_{g_1\dots g_{j-2} G(\st(j-1))}
		\\	\otimes{\calpha{g_{j}}{}}_{g_1\dots g_{j-1} G(\st(j))}
		\otimes \\\cdots
		\otimes\calpha{g_n}{}_{g_1\dots g_{n-1}G(\st(n))}
	\end{multline*}
	where $j> n-d\kappa$ by Lemma \ref{dtailsize}.
	Note that the choice functions in $\widehat\alpha$ and $\widehat\beta$ are well defined, as membership in the $d$-tail is independent of representation.
       	Let us show that the indexing of $\widehat\alpha$ and $\widehat\beta$ is well defined.

	We shall focus on $\widehat\alpha$, as the same arguments hold for $\widehat\beta$.
	Suppose we have $g\in G(\Gamma)$ and two equivalent reduced words of $g$ which differ by a single syllable shuffle, say 
	$$g\equiv g_1\dots g_{i}g_{i+1}\dots g_n \equiv g_1\dots g_{i+1}g_{i}\dots g_n$$
	where $G(i)$ and $G(i+1)$ commute.
	Then we have two equalities: 
	\begin{equation*}
		\begin{split}
			g_1\dots g_{i-1} g_i G(\st(i+1)) &= g_1\dots g_{i-1}G(\st(i+1)) \text{ and }\\
			g_1\dots g_{i-1}g_{i+1}G(\st(i)) &= g_1\dots g_{i-1} G(\st(i)),
		\end{split}
	\end{equation*}
	as $g_i$ is in $G(\st(i+1))$ and $g_{i+1}$ is in $G(\st(i))$.
	This implies that the index cosets corresponding to $g_i$ in $\widehat\alpha(g_1\dots g_{i}g_{i+1}\dots g_n)$ and $\widehat\alpha(g_1\dots g_{i+1}g_{i}\dots g_n)$ are the same, and likewise for the index coset of $g_{i+1}$. 
	For any other syllable $g_j$ of $g$, the coset $g_1\dots g_{j-1} G(\st(j))$ is not effected by shuffling $g_i$ and $g_{i+1}$, hence $\widehat\alpha(g_1\dots g_{i}g_{i+1}\dots g_n)=\widehat\alpha(g_1\dots g_{i+1}g_{i}\dots g_n)$.
	Any other reduced word representing $g$ may be reached by a finite sequence of syllable shuffles, so we may unambiguously define $\widehat\alpha(g)$ (similarly $\widehat\beta(g)$ is well defined). 

	Let us show that $\widehat\alpha(g)$ and $\widehat\beta(g)$ are bounded.
	First, $C^\alpha$ and $C^\beta$ are bounded by $8\varepsilon^{1/4}$ by Lemma \ref{approx}.
	Since the only non-unit vectors of $\widehat\alpha(g)$ correspond to syllables in the $d$-tail of $g$, of which there are at most $d\kappa$ by Lemma \ref{dtailsize}, we have
	\begin{equation*}
		\begin{split}
			\|\widehat\alpha(g)\| 
			&= \prod_{\{g_i\text{ in the $d$-tail of $g$}\}}  \|\calpha{g_i}{}\| \\
			&= \prod_{\{ g_i\text{ in the $d$-tail of $g$}\}} \sqrt{ \|\alpha_i(g_i)\|^2+|C^\alpha(g_i,g_i)|^2+|C^\alpha(g_i,e)|^2}
			\\&\le \left(1+\varepsilon+128\sqrt{\varepsilon}\right)^{\frac{d\kappa}{2}}
			\le (1+129\sqrt{\varepsilon})^{\frac{d\kappa}{2}}
		\end{split}
	\end{equation*}
	when $\varepsilon<1$.
	We have the same inequality for $\|\widehat\beta(g)\|$.

	The kernel   
	$$\phi_d(g,h) := \langle\widehat\alpha(g),\widehat\beta(h)\rangle$$
	 is then completely bounded with cb-norm $\cb{\phi_d}=\sup_{g,h\in G}\|\widehat\alpha(g)\|\|\widehat\beta(h)\|\le \left(1+129\sqrt{\varepsilon}\right)^{d\kappa}.$

	For a word $g \equiv g_1\dots g_d$ of length $d$ in $G(\Gamma)$, we have 
	\begin{equation*}
		\begin{split}
			\phi_{d}(g,e) &=
			\prod_{i=1}^d \left \langle \calpha{g_i}{},\ambiente{}\right \rangle_{g_1\dots g_{i-1}G(\st(i))}
			\\ &=\prod_{i=1}^d \langle \alpha(g_i),\beta(e)\rangle
			= \f(g_1)\f(g_2)\dots\f(g_d).
		\end{split}
	\end{equation*}
	where the second equality follows from calculation and the definition of $C^\alpha(g_i,g_i)$.
\end{proof}

For the rest of the section, $\phi_d$ refers to the kernel constructed in Proposition \ref{gpextension}.
As in the free product case, we designed $\phi_d$ to be the kernel of a function restricted to words of length $d$: 
\begin{lemma}\label{invariance}
	For any pair $g,h\in G(\Gamma)$ such that $|h^{-1}g|_r=d$, we have that $\phi_d(g,h)=\phi_d(h^{-1}g,e)$.
\end{lemma}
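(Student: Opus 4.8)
The plan is to show that $\widehat\alpha$ and $\widehat\beta$ behave under cancellation exactly the way $\widehat\omega$ did in the free product case: when $|h^{-1}g|_r = d$, the parts of $\widehat\alpha(g)$ and $\widehat\beta(h)$ coming from syllables \emph{outside} the respective $d$-tails should pair off to give $1$, leaving precisely the inner product that computes $\phi_d(h^{-1}g,e)$. First I would invoke Lemma \ref{hg} to fix reduced words $g\equiv g_1\dots g_n$ and $h\equiv h_1\dots h_m$ with $g_i=h_i$ for $i\le q$, with $h_{q+j}$ and $g_{q+j}$ in a common vertex group (mutually in each other's stars) for $1\le j\le p\le\kappa$, and with
$$h^{-1}g\equiv h^{-1}_m\dots h^{-1}_{q+p+1}(h^{-1}_{q+1}g_{q+1})\dots(h^{-1}_{q+p}g_{q+p})g_{q+p+1}\dots g_n.$$
By Lemma \ref{cancel}, every syllable of $g$ outside its $d$-tail cancels in $h^{-1}g$ — i.e. lies among $g_1,\dots,g_{q+p}$, and symmetrically for $h$ — so that on the initial segment $g_1\dots g_q = h_1\dots h_q$ both $\widehat\alpha(g)$ and $\widehat\beta(h)$ use the ambient vector $\ambient{g_i}{}$ at the same index coset $g_1\dots g_{i-1}G(\st(i))$, contributing $\langle \ambient{g_i}{},\ambient{g_i}{}\rangle = 1$.

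Next I would handle the three remaining blocks. (a) The $p$ syllables $g_{q+1},\dots,g_{q+p}$ in $\widehat\alpha(g)$ sit at index cosets $g_1\dots g_{q+j-1}G(\st(q+j))$; since all of $g_{q+1},\dots,g_{q+p}$ lie in $G(\st(q+i))$ for each $i$, these cosets coincide with $h_1\dots h_{q+j-1}G(\st(q+j))$, so the corresponding factors of $\widehat\beta(h)$ are indexed the same way and the inner product on these slots is $\langle \calpha{g_{q+j}}{},\cbeta{h_{q+j}}{}\rangle$; by the definitions of $C^\alpha, C^\beta, \omega, D$ this equals $\langle\alpha(g_{q+j}),\beta(h_{q+j})\rangle = \f(h_{q+j}^{-1}g_{q+j})$, exactly the contribution of the amalgamated syllable $h^{-1}_{q+j}g_{q+j}$ to $\phi_d(h^{-1}g,e)$. (This is the computation already carried out in Proposition \ref{cbext} and \ref{gpextension} for the $(g_i,e)$ case; here one does it for a general pair.) (b) The syllables $g_{q+p+1},\dots,g_n$ of $g$ lie in the $d$-tail of $g$ (by Lemma \ref{cancel}) and have no partner in $\widehat\beta(h)$ at their index cosets — those cosets are not of the form $h_1\dots h_{i-1}G(\st(i))$ for any surviving syllable of $h$ — so each contributes $\langle \calpha{g_i}{},\ambiente{}\rangle = \langle\alpha(g_i),\beta(e)\rangle$, matching the contribution of $g_i$ (a surviving syllable of $h^{-1}g$) to $\phi_d(h^{-1}g,e)$. (c) Symmetrically the $h_{q+p+1},\dots,h_m$ contribute $\langle\ambiente{},\cbeta{h_i}{}\rangle = \langle\alpha(e),\beta(h_i)\rangle = \langle\alpha(h_i^{-1}),\beta(e)\rangle$, matching the syllables $h^{-1}_i$ of $h^{-1}g$. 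Multiplying the four blocks gives $\phi_d(g,h) = \langle\widehat\alpha(g),\widehat\beta(h)\rangle$ equal to the product over all syllables of (any reduced word for) $h^{-1}g$ of $\f$ applied to that syllable, which by the last display in the proof of Proposition \ref{gpextension} is $\phi_d(h^{-1}g,e)$ — provided $|h^{-1}g|_r = d$, which is what makes the $d$-tail of $h^{-1}g$ all of $h^{-1}g$ and guarantees the decorated maps are used on every surviving syllable.

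The main obstacle I expect is \emph{index-coset bookkeeping}: verifying that a decorated slot of $\widehat\alpha(g)$ and a decorated slot of $\widehat\beta(h)$ really do land at the same element of $T = \sqcup_v G(\Gamma)/G(\st(v))$ precisely when the underlying syllables are the cancelling/amalgamating pair, and that an undecorated surviving syllable of $g$ (resp. $h$) lands at a coset where $\widehat\beta(h)$ (resp. $\widehat\alpha(g)$) carries only the vacuum vector. Concretely: if $g_i$ (a syllable of $g$ surviving into $h^{-1}g$) occupied the same coset $g_1\dots g_{i-1}G(\st(i))$ as some $h_j$, one needs the syllable-shuffle/cancellation analysis — of the kind used in Lemmas \ref{initial} and \ref{cancel} — to conclude $i\le q$ or $i = q+j'$ with $j'\le p$, contradicting that $g_i$ survives (unless it is genuinely the amalgamation partner). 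Equivalently one should argue that the non-vacuum support of $\widehat\alpha(g)$ and of $\widehat\beta(h)$, viewed as subsets of $T$, intersect exactly in the cosets indexed by the common prefix and the $p$ amalgamating syllables. Once this set-theoretic claim about supports in $T$ is nailed down, the rest is the bilinearity of the tensor inner product plus the already-established identity $\langle\calpha{x}{},\cbeta{y}{}\rangle = \langle\alpha(x),\beta(y)\rangle$, and the conclusion follows by the same token as in Proposition \ref{cbext}.
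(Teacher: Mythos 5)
Your proposal is correct and follows essentially the same route as the paper: Lemma \ref{hg} to align the reduced words, Lemma \ref{cancel} to place the surviving and amalgamating syllables in the $d$-tails, and the slot-by-slot inner product computation over the index set $T$ with the same three-block decomposition. The only small imprecision is on the common prefix $g_1\dots g_q=h_1\dots h_q$, where a cancelled syllable may still lie in a $d$-tail of $g$ or of $h$ and hence carry a decorated vector rather than the ambient one; the paper checks all four resulting combinations and each factor is still $1$ by the choice of the constants, so your conclusion is unaffected.
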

\begin{proof}
	Let us show that $\phi_{d}(g,h) = \phi_{d}(h^{-1}g,e)$ whenever $|h^{-1}g|_r =d$.

	Suppose $g$ and $h$ are elements of $G(\Gamma)$, and let $n=|g|_r$, $m=|h|_r$, and $d= |h^{-1}g|_r$.
	By Lemma \ref{hg}, there are reduced representations $g\equiv g_1\dots g_n$ and $h\equiv h_1\dots h_m$ such that $g_1\dots g_q= h_1\dots h_q$ and
	$$h^{-1}g\equiv h^{-1}_m\dots h^{-1}_{q+p+1} (h^{-1}_{q+1} g_{q+1})\dots(h^{-1}_{q+p}g_{q+p})g_{q+p+1}\dots g_n$$ for some $q\le\min(m,n)$ and $p\le\kappa$. 

	We showed in the proposition above that 
	\begin{equation*}
		\phi_d(h^{-1}g,e) = 
		\prod_{i=q+p+1}^m\f(h^{-1}_{i})
		\prod_{i=q+1}^{q+p}\f(h_i^{-1}g_i)
		\prod_{i=q+p+1}^n\f(g_{i}).
	\end{equation*} 
	Now we analyze $\phi_d(g,h)=\langle\widehat\alpha(g),\widehat\beta(h)\rangle$.
	Suppose $i\le q$, in which case $g_i=h_i$ and $g_1\dots g_{i-1}G(\st(i))=h_1\dots h_{i-1}G(\st(i))$.
	Then the $g_1\dots g_{i-1}G(\st(i))$  factor of $\langle\widehat\alpha(g),\widehat\beta(h)\rangle$ is one of the following:
	\begin{equation*}
		\begin{split}	
			\big\langle\calpha{g_i}{}&,\cbeta{g_i}{}\big\rangle,
			\\	\big\langle\calpha{g_i}{}&,\ambient{g_i}{}\big\rangle,
			\\\big\langle\ambient{g_i}{}&,\cbeta{g_i}{}\big\rangle, \text{ or } 
			\\\big\langle\ambient{g_i}{}&,\ambient{g_i}{}\big\rangle,
		\end{split}
	\end{equation*}
	depending on whether or not $g_i$ is in the $d$-tails of $g$ or $h$.
	In any case, the factor is equivalent to $1$ by the choice of constants.

	If $q< i \le q+p$, then $g_i\ne h_i$, but they amalgamate in the product $h^{-1}g$.
	By Lemma \ref{cancel}, $g_i$ and $h_i$ are in the $d$-tails of their respective words.
	Since $g_{q+j}$ and $h_{q+j}$ are in $G(\st(i))$ for all $1\le j< i$, we have  
	$$g_1\dots g_q g_{q+1}\dots g_{i-1}G(\st(i)) = g_1\dots g_qG(\st(i)) = h_1\dots h_qh_{q+1}\dots h_{i-1}G(\st(i))$$
	which implies that the $g_1\dots g_qG(\st(i))$ factor of the inner product is 			$$\big\langle\calpha{g_i}{},\cbeta{h_i}{}\big\rangle=\langle\alpha(g_i),\beta(h_i)\rangle.$$

	For all other non-vacuum indices we either have a factor of 
	\begin{equation*}
		\begin{split}	
			&\left\langle\calpha{g_i}{},\ambiente{}\right\rangle 
			= \langle\alpha(g_i),\beta(e)\rangle, \text{ or }
			\\&\left\langle\ambiente{},\cbeta{h_i}{}\right\rangle
			= \langle\alpha(e),\beta(h_i)\rangle. 
		\end{split}
	\end{equation*}

	Hence 
	\begin{equation*}
		\begin{split}
			\phi_{d}(g,h)
			&=\langle\widehat\alpha(g),\widehat\beta(h)\rangle
			\\&= 	\prod_{i=q+1}^{q+p}\langle\alpha(g_i),\beta(h_i)\rangle 
			\prod_{i=q+p+1}^n\langle\alpha(g_{i}),\beta(e)\rangle 
			\prod_{i=q+p+1}^m\langle\alpha(e),\beta(h_{i})\rangle
			\\&= \prod_{i=q+1}^{q+p}\langle\alpha(h_i^{-1}g_i),\beta(e)\rangle 
			\prod_{i=q+p+1}^n\langle\alpha(g_{i}),\beta(e)\rangle 
			\prod_{i=q+p+1}^m\langle\alpha(h^{-1}_{i}),\beta(e)\rangle
			\\&=\prod_{i=q+1}^{q+p}\f(h_i^{-1}g_i)
			\prod_{i=q+p+1}^n\f(g_{i})
			\prod_{i=q+p+1}^m\f(h^{-1}_{i})
			= \phi_{d}(h^{-1}g,e)
		\end{split}
	\end{equation*}
	and we have our desired equality.
\end{proof}

For a natural number $n$ and real number $x$, let $q(n,x):= 2\sqrt{n\kappa 129}\left(1+129\sqrt{x}\right)^{n\kappa}x^{1/4}$.
Clearly for a fixed $n$, the function $q(n,x)$ goes to $0$ as $x$ goes to $0$.

\begin{lemma}\label{cbpd}
	Given the hypotheses of Proposition \ref{gpextension}, there is a positive definite kernel $\Psi$ on $G(\Gamma)$ of norm $1$ such that $\cb{\phi_d-\Psi}  \le q(d,\varepsilon)$ for all natural numbers $d$.
\end{lemma}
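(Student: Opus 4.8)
The plan is to take $\Psi$ to be the graph-product version of the \emph{ambient positive definite kernel} from Section~4. Keeping the index set $T = \sqcup_{v\in\Gamma} G(\Gamma)/G(\st(v))$ and the Hilbert space $\scr{\widehat H} = \bigotimes_{t\in T}(\scr{H}_t\oplus\C^2\oplus\C^2)$ of Proposition~\ref{gpextension}, define for $g\equiv g_1\dots g_n$ the map
\[
\widehat\omega(g) := \bigotimes_{0<i\le n} \ambient{g_i}{}_{g_1\dots g_{i-1}G(\st(i))},
\]
i.e.\ the map obtained from $\widehat\alpha$ by using the vector $\ambient{g_i}{}$ on \emph{every} syllable rather than the perturbed vector $\calpha{g_i}{}$ on the $d$-tail. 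The syllable-shuffle argument used for $\widehat\alpha$ in Proposition~\ref{gpextension} applies verbatim, so $\widehat\omega$ is well defined and hence so is the kernel $\Psi(g,h) := \langle\widehat\omega(g),\widehat\omega(h)\rangle$; note it does not depend on $d$. Being a Gram kernel, $\Psi$ is positive definite; and since every factor of $\widehat\omega(g)$ is a unit vector (the factors $\ambient{g_i}{}$ by the defining choice of $D$, the vacuum factors by construction), $\|\widehat\omega(g)\| = 1$ for all $g$, so $\cb{\Psi}\le 1$, with equality because $\Psi(e,e)=1$.

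For the estimate I would compare $\widehat\alpha$ with $\widehat\omega$, and $\widehat\beta$ with $\widehat\omega$, coordinatewise. The tensors $\widehat\alpha(g)$ and $\widehat\omega(g)$ agree in every coordinate except the at most $d\kappa$ coordinates (by Lemma~\ref{dtailsize}) indexing the $d$-tail of $g$, where $\widehat\alpha(g)$ carries $\calpha{g_i}{}$ in place of $\ambient{g_i}{}$. The renormalization constants $C^\alpha, C^\beta, D$ were chosen precisely so that $\langle\calpha{g_i}{},\ambient{g_i}{}\rangle = 1$ (this is one of the four coordinate identities verified in Lemma~\ref{invariance}), whence $\langle\widehat\alpha(g),\widehat\omega(g)\rangle = \langle\widehat\omega(g),\widehat\alpha(g)\rangle = \|\widehat\omega(g)\|^2 = 1$ and therefore
\[
\|\widehat\alpha(g)-\widehat\omega(g)\|^2 = \|\widehat\alpha(g)\|^2 - 1 \le (1+129\sqrt\varepsilon)^{d\kappa} - 1,
\]
the last inequality because $\widehat\alpha(g)$ has at most $d\kappa$ non-unit factors each of squared norm at most $1+129\sqrt\varepsilon$ (exactly the bound established in the proof of Proposition~\ref{gpextension}, using Lemma~\ref{approx}). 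Since $(1+x)^{n}-1\le nx(1+x)^{n}$, this gives, uniformly in $g$,
\[
\|\widehat\alpha(g)-\widehat\omega(g)\| \le \sqrt{d\kappa\cdot 129}\,(1+129\sqrt\varepsilon)^{d\kappa/2}\,\varepsilon^{1/4},
\]
and the identical bound for $\|\widehat\beta(h)-\widehat\omega(h)\|$.

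Finally I would assemble the pieces using the splitting
\[
\phi_d(g,h) - \Psi(g,h) = \big\langle \widehat\alpha(g)-\widehat\omega(g),\,\widehat\beta(h)\big\rangle + \big\langle\widehat\omega(g),\,\widehat\beta(h)-\widehat\omega(h)\big\rangle.
\]
Each summand is a kernel of Gram type $\langle P(g),Q(h)\rangle$, hence completely bounded with cb-norm at most $\sup_g\|P(g)\|\cdot\sup_h\|Q(h)\|$ --- the same principle used to bound $\cb{\phi_d}$ in Proposition~\ref{gpextension}. Using $\|\widehat\omega(g)\| = 1$, the bound $\|\widehat\beta(h)\|\le(1+129\sqrt\varepsilon)^{d\kappa/2}$ from Proposition~\ref{gpextension}, and the estimates just obtained, the first summand has cb-norm at most $\sqrt{d\kappa\cdot 129}\,(1+129\sqrt\varepsilon)^{d\kappa}\,\varepsilon^{1/4}$ and the second at most $\sqrt{d\kappa\cdot 129}\,(1+129\sqrt\varepsilon)^{d\kappa/2}\,\varepsilon^{1/4}$; bounding the latter's growth factor by $(1+129\sqrt\varepsilon)^{d\kappa}$ and adding yields $\cb{\phi_d-\Psi}\le q(d,\varepsilon)$.

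The only real work is the uniform tensor-perturbation estimate of the second paragraph. The subtle points there are that the bound on $\|\widehat\alpha(g)-\widehat\omega(g)\|$ must be independent of $|g|_r$ and depend on $d$ only (which is why one restricts attention to the $d$-tail and invokes Lemma~\ref{dtailsize}), and that one must exploit the exact identity $\langle\calpha{g_i}{},\ambient{g_i}{}\rangle = 1$ to collapse $\|\widehat\alpha(g)-\widehat\omega(g)\|^2$ to $\|\widehat\alpha(g)\|^2-1$ and thereby extract the favourable $\sqrt{d\kappa}$ rather than a $d\kappa$ dependence. Everything else is bookkeeping already carried out in Proposition~\ref{gpextension} and Lemma~\ref{approx}.
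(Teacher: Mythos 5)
Your proposal is correct and follows essentially the same route as the paper: the same ambient kernel $\Psi=\langle\widehat\omega(\cdot),\widehat\omega(\cdot)\rangle$, the same exact identity $\langle\widehat\alpha(g),\widehat\omega(g)\rangle=1$ collapsing $\|\widehat\alpha(g)-\widehat\omega(g)\|^2$ to $\|\widehat\alpha(g)\|^2-1$, and the same two-term splitting of $\phi_d-\Psi$ bounded factorwise. The only (immaterial) difference is the order in which you apply the elementary inequality $(1+x)^n-1\le nx(1+x)^n$ before summing the two terms.
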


\begin{proof}
	Let  $\omega$, $D$, and $\scr{\widehat H}$ be defined as in Proposition \ref{gpextension}.

	Define, for $g\equiv g_1\dots g_n$,	
$$\widehat\omega(g) := \bigotimes_{0<i\le n}\ambient{g_i}{}_{g_1\dots g_{i-1}G(\st(i))},$$ 
By the analysis of $\widehat\alpha$ in Prop \ref{gpextension}, the indexing of $\widehat\omega$ is well defined, and hence $\widehat\omega$ is a well defined map from $G(\Gamma)$ to $\scr{\widehat H}$.
Also, $\|\widehat\omega\|=1$, since $\|\ambient{g}{}\|=1$ for all $g\in G(\Gamma)$ by the definition of $D(g)$.

Our positive definite kernel is then $\Psi(g,h):=\langle\widehat\omega(g),\widehat\omega(h)\rangle $.
Note that this is not $G(\Gamma)$-invariant.

Now let $d$ be a natural number and let us show that $\cb{\phi_d-\Psi}  \le q(d,\varepsilon)$.
	Let $g\equiv g_1\dots g_n$ and $h$ be elements of $G$.
	Recall that $\phi_d(g,h) = \langle\widehat\alpha(g),\widehat\beta(h)\rangle$, where the maps $\widehat\alpha(g)$ and $\widehat\beta(g)$ differ from $\widehat\omega(g)$ in value only on the $d$-tail of $g$.
	Since the shared values are unit vectors, we have that
	\begin{equation*}
		\begin{split}
			\langle\widehat\alpha(g),\widehat\omega(g)\rangle 
			&= \prod_{\{g_i\text{ in $d$-tail of $g$}\}}\langle\calpha{g_i}{},\ambient{g_i}{}\rangle
			\\&= \prod_{\{g_i\text{ in $d$-tail of $g$}\}}\langle\alpha(g_i),\beta(g_i)\rangle
			=1
		\end{split}
	\end{equation*}
	and hence
	\begin{equation*}
		\begin{split}
			\|\widehat\alpha(g)-\widehat\omega(g)\|^2
			&= \|\widehat\alpha(g)\|^2 + \|\widehat\omega(g)\|^2-2\langle\widehat\alpha(g),\widehat\omega(g)\rangle
			=\|\widehat\alpha(g)\|^2-1.
		\end{split}
	\end{equation*}
	The same analysis shows that $\|\widehat\beta(g)-\widehat\omega(g)\|^2 \le \|\widehat\beta(g)\|^2-1.$

	As for the kernels $\phi_d$ and $\Psi$ themselves, we can decompose their difference into the sum of two completely bounded functions:
	\begin{equation*}
		\begin{split}
			\phi_d(g,h) - \Psi(g,h) 
			&= \langle\widehat\alpha(g),\widehat\beta(h)\rangle -  \langle\widehat\omega(g),\widehat\omega(h)\rangle
			\\	&= \langle\widehat\alpha(g)-\widehat\omega(g),\widehat\beta(h)\rangle +  \langle\widehat\omega(g),\widehat\beta(h)-\widehat\omega(h)\rangle.
		\end{split}
	\end{equation*}

	Hence 
	\begin{equation*}
		\begin{split}
			\cb{\phi_d- \Psi}
			&\le \cb{\langle\widehat\alpha(\cdot)-\widehat\omega(\cdot),\widehat\beta(\cdot)\rangle} + \cb{ \langle\widehat\omega(\cdot),\widehat\beta(\cdot)-\widehat\omega(\cdot)\rangle}
			\\&\le \sup_{g,h\in G}\|\widehat\alpha(g)-\widehat\omega(g)\|\|\widehat\beta(h)\|
			+ \sup_{g,h\in G}\|\widehat\omega(g)\|\|\widehat\beta(h)-\widehat\omega(h)\|
			\\&\le \sup_{g,h\in G}\left(\|\widehat\alpha(g)\|^2-1\right)^{\frac{1}{2}}\|\widehat\beta(h)\|
			+ \sup_{h\in G}\left(\|\widehat\beta(h)\|^2-1\right)^{\frac{1}{2}}
			\\&\le \left((1+129\sqrt{\varepsilon})^{d\kappa}-1\right)^{\frac{1}{2}} (1+129\sqrt{\varepsilon})^{\frac{d\kappa}{2}}+ \left((1+129\sqrt{\varepsilon})^{d\kappa}-1\right)^{\frac{1}{2}}
			\\&\le \left(d\kappa 129(1+129\sqrt{\varepsilon})^{d\kappa}\sqrt{\varepsilon}\right)^{\frac{1}{2}} (1+129\sqrt{\varepsilon})^{\frac{d\kappa}{2}}+ \left(d\kappa 129(1+129\sqrt{\varepsilon})^{d\kappa}\sqrt{\varepsilon}\right)^{\frac{1}{2}}
			\\&\le 2\sqrt{d\kappa 129}(1+129\sqrt{\varepsilon})^{d\kappa}\varepsilon^{\frac{1}{4}}
			= q(d,\varepsilon).
		\end{split}
	\end{equation*}
	which proves the lemma.
\end{proof}

Now let $\Xi_d$ be the characteristic function of the $d$ sphere in the CAT(0)-cube complex $X$ from Section 3 and $\chi_d$ be the characteristic function of words of reduced length $d$ in $G(\Gamma)$.
For a pair $g,h\in G(\Gamma)$ (and a base point $x_0\in X$), we have
$$\chi_d(h^{-1}g)= \Xi_{2d}(gx_0,hx_0) = \left\{
\begin{array}{lr}
	1 &\text{if $|h^{-1}g|_r = d$,}\\
	0 &\text{else.}
\end{array}\right.$$
We quote following theorem of Mizuta:
\begin{theorem}[(Theorem 2 from \cite{Mizu08})]
	Let $X$ be a finite-dimensional CAT(0) cube complex and let $\chi_n = \{(x, y) \in X\times X: d_X(x, y) = n\}$ for $n \in \Z^+$. Then the norms of Schur multipliers of the characteristic function  $\chi_n$ increase polynomially, i.e. there exists a polynomial $p$ such that $\cb{\chi_n}\le  p(n)$.	
\end{theorem}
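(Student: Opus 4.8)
This statement is quoted from Mizuta's paper; the approach I would take, following Haagerup and Bozejko--Picardello, is to prove it first for trees and then bootstrap to an arbitrary finite-dimensional CAT(0) cube complex by replacing ``the geodesic'' with the combinatorial interval.

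\textit{The tree case} ($\dim X=1$). Here one shows directly that $\cb{\chi_n}\le Cn$ by exhibiting an inner-product decomposition $\chi_n(x,y)=\langle F(x),G(y)\rangle$ of controlled norm. Because geodesics in a tree are unique and non-backtracking, one lets $F(x)$ and $G(y)$ record, for each $0\le k\le n$, the vertex at combinatorial distance $k$ from $x$ (resp.\ $n-k$ from $y$) together with the edge at which a geodesic enters it; the tree condition forces the pairing to detect the unique length-$n$ geodesic from $x$ to $y$ when $d_X(x,y)=n$ and to vanish otherwise, and yields $\|F\|_\infty,\|G\|_\infty=O(\sqrt n)$. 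This is the base case.

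\textit{The general case} ($\dim X = D<\infty$). The plan is to work with the combinatorial interval $I(x,y)=\{z: d_X(x,z)+d_X(z,y)=d_X(x,y)\}$. Three facts drive the argument: (1) $d_X(x,y)$ equals the number of hyperplanes of $X$ separating $x$ and $y$; (2) $I(x,y)$ is a finite CAT(0) cube complex of dimension $\le D$ whose hyperplanes are precisely the $n$ separating ones; and (3) a finite CAT(0) cube complex with $n$ hyperplanes and dimension $\le D$ has at most $\sum_{k=0}^{D}\binom{n}{k}=O(n^D)$ vertices --- a Sauer--Shelah type bound, since a shattered family of hyperplanes must be pairwise crossing and hence of size $\le D$, so the system of half-spaces has VC-dimension $\le D$. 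One then sets up a layered kernel in which $F(x)$ and $G(y)$ record, for $0\le k\le n$, the vertices $z$ at distance $k$ from $x$ and $n-k$ from $y$ together with bounded local cube-direction data, the weights chosen by Möbius inversion over the poset structure of the interval so that the pairing evaluates to $\chi_n(x,y)$. Since only the at most $O(n^D)$ vertices of $I(x,y)$ contribute, $\|F\|_\infty,\|G\|_\infty=O(n^{D/2})$, whence $\cb{\chi_n}=O(n^{D})$, the desired polynomial bound.

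The main obstacle is that, unlike in a tree, geodesics in $X$ are not unique and the separating hyperplanes are only partially --- not linearly --- ordered, so the maps $F,G$ must be constructed canonically from the interval and its median/poc-set structure rather than from any chosen geodesic, and one must check that the Möbius corrections exactly cancel the contributions of pairs with $d_X(x,y)\ne n$. Carrying out this bookkeeping, and pinning down how the degree of the polynomial depends on $D$ (in our application, on the uniform bound $\kappa$ on mutually crossing walls), is the technical heart of the proof; it is also precisely where finite-dimensionality is indispensable, since the vertex count of $I(x,y)$ ceases to be polynomial once $D=\infty$.
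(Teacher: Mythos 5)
First, a point of comparison: the paper does not prove this statement at all. It is quoted verbatim as Theorem 2 of Mizuta \cite{Mizu08} and used as a black box (its only role here is to control $\cb{\sum_d e^{-d/r}\chi_d}$ in the proof of the main theorem). So there is no in-paper argument to measure your proposal against; what you have written is an outline of how one might reprove Mizuta's result.

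As an outline it captures the right combinatorial inputs --- distance equals the number of separating hyperplanes, the interval $I(x,y)$ is governed by the poset of separating hyperplanes, and finite-dimensionality bounds antichains (pairwise-crossing families) by $D$, so $I(x,y)$ has $O(n^{D})$ vertices. But the decisive analytic step is not carried out, and as stated it contains a genuine error: you define $F(x)$ to ``record, for $0\le k\le n$, the vertices $z$ at distance $k$ from $x$ \emph{and} $n-k$ from $y$.'' A Schur multiplier decomposition $\chi_n(x,y)=\langle F(x),G(y)\rangle$ requires $F$ to depend on $x$ alone; and the set of vertices at distance $k$ from $x$ alone is infinite, so the naive vector $\sum_{d(x,z)=k}\delta_z$ has infinite norm. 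The same defect already appears in your tree case: ``the vertex at combinatorial distance $k$ from $x$'' is not well defined without a normalization that does not reference $y$ (Bozejko--Picardello fix an end of the tree and take the $k$-th vertex on the ray from $x$ to that end, detecting the geodesic $[x,y]$ via the confluence point). Producing the analogous canonical, $y$-independent data in a general CAT(0) cube complex --- where geodesics are not unique and the separating hyperplanes are only partially ordered --- together with the exact cancellation you defer to an unspecified ``M\"obius inversion,'' is precisely the content of Mizuta's theorem. So the proposal is a plausible plan consistent with the known proof strategy, but its technical heart is asserted rather than proved, and the one concrete construction it does offer would not yield a valid multiplier decomposition.
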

Hence $\cb{\chi_d}\le p(d)$ for some polynomial $p$.
Given a fixed $r>0$ and finite sum of the form $\sum_{d=0}^M e^{-d/r}\chi_d$,  we note that 
$$\cb{\sum_{d=0}^M e^{-d/r}\chi_d} 
\le \sum_{d=0}^M e^{-d/r}\cb{\chi_d} 
\le \sum_{d=0}^M e^{-d/r}p(d), $$
which converges as $M$ goes to infinity.
The limit function $\sum_{d=0}^\infty e^{-d/r}\chi_d = e^{|\cdot|_r/r}$ is positive definite.

We now have everything necessary to prove the main theorem.

\begin{theorem}\label{main}
	The graph product of weakly amenable groups (with Cowling-Haagerup constant 1) is weakly amenable.
\end{theorem}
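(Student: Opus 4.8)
The plan is to assemble the pieces already developed into a completely bounded approximate identity on $G(\Gamma)$. Suppose each vertex group $G_v$ is weakly amenable with Cowling--Haagerup constant $1$, so for each $v$ we have a sequence $\{\f_v^n\}$ of finitely supported completely bounded functions with $\f_v^n \to \mathbf{1}$ pointwise and $\cb{\f_v^n} \to 1$; by rescaling we may assume $\f_v^n(e) = 1$ and $\cb{\f_v^n} < 1 + \varepsilon_n$ with $\varepsilon_n \to 0$. For each $n$, apply Proposition \ref{gpextension} to obtain, for every $d \in \N$, the completely bounded kernel $\phi_d^n$ on $G(\Gamma)$ with $\phi_d^n(g,e) = \f_1^n(g_1)\cdots \f_d^n(g_d)$ whenever $|g|_r = d$, and by Lemma \ref{cbpd} a positive definite kernel $\Psi^n$ of norm $1$ with $\cb{\phi_d^n - \Psi^n} \le q(d,\varepsilon_n)$. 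By Lemma \ref{invariance}, $\phi_d^n(g,h) = \phi_d^n(h^{-1}g,e)$ when $|h^{-1}g|_r = d$, so the product $\phi_d^n \chi_d$ is a $G(\Gamma)$-invariant kernel and descends to a finitely supported function on $G(\Gamma)$.

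Next I would define, for parameters $r > 0$ and $M \in \N$, the candidate function $F^{n,r,M}$ via the kernel
\[
	F^{n,r,M}(h^{-1}g) := \sum_{d=0}^{M} e^{-d/r}\,\phi_d^n(g,h)\,\chi_d(h^{-1}g).
\]
This is finitely supported (only finitely many $d$, and each $\f_v^n$ is finitely supported). To bound its cb-norm, write $\phi_d^n = \Psi^n + (\phi_d^n - \Psi^n)$ and split the sum accordingly. The ``error'' part contributes at most $\sum_{d=0}^{M} e^{-d/r} q(d,\varepsilon_n)\,\cb{\chi_d} \le \sum_{d=0}^{\infty} e^{-d/r} q(d,\varepsilon_n) p(d)$ using Mizuta's polynomial bound $\cb{\chi_d} \le p(d)$; since $q(d,\varepsilon_n) \to 0$ as $\varepsilon_n \to 0$ for each fixed $d$ and the exponential dominates the polynomial-times-$(1+129\sqrt{\varepsilon_n})^{d\kappa}$ growth once $r$ is fixed small enough relative to $n$, this is small for $n$ large. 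The ``main'' part is $\sum_{d=0}^{M} e^{-d/r}\Psi^n \chi_d$, a truncation of the positive definite kernel $e^{-|\cdot|_r/r}\Psi^n(\cdot,\cdot)$ (a product of a positive definite kernel with the positive definite kernel $e^{-|\cdot|_r/r}$, the latter being positive definite on the CAT(0) cube complex by Schoenberg); its cb-norm is at most $1$, and the truncation tail $\sum_{d > M} e^{-d/r}\Psi^n\chi_d$ has cb-norm at most $\sum_{d>M} e^{-d/r} p(d) \to 0$ as $M \to \infty$. Hence $\cb{F^{n,r,M}} \le 1 + o(1)$ as $M, n \to \infty$ with $r$ coupled appropriately.

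For pointwise convergence to $\mathbf{1}$: fix $g \in G(\Gamma)$ with $|g|_r = d_0$. Then $F^{n,r,M}(g) = e^{-d_0/r}\f_1^n(g_1)\cdots\f_{d_0}^n(g_{d_0})$ once $M \ge d_0$, and as $n \to \infty$ each $\f_{v_i}^n(g_i) \to 1$, so $F^{n,r,M}(g) \to e^{-d_0/r}$, which tends to $1$ as $r \to \infty$. A standard diagonal argument over $r \to \infty$, $M \to \infty$, $n \to \infty$ (choosing, say, for each $k$ a triple $(n_k, r_k, M_k)$ so that $F^{n_k,r_k,M_k}$ agrees with $\mathbf{1}$ to within $1/k$ on the $k$-ball and has cb-norm below $1 + 1/k$) produces the desired approximate identity, so $G(\Gamma)$ is weakly amenable with Cowling--Haagerup constant $1$.

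I expect the main obstacle to be the bookkeeping in the cb-norm estimate: one must choose the coupling between $r$, $M$, and $n$ (equivalently $\varepsilon_n$) carefully so that the error term $\sum_d e^{-d/r} q(d,\varepsilon_n) p(d)$ stays controlled while $r$ is simultaneously forced to infinity for pointwise convergence. The key point making this possible — and the reason the hypothesis $\Lambda_{cb} = 1$ is essential — is that $\phi_d^n$ is genuinely close to the \emph{positive definite} kernel $\Psi^n$ (error governed by $\varepsilon_n^{1/4}$), so the unbounded-in-$d$ factor $(1+129\sqrt{\varepsilon_n})^{d\kappa}$ in the cb-norm of $\phi_d^n$ is only paid against the small quantity $\varepsilon_n$, not against the full cb-norm. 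Once $r$ is chosen as a slowly growing function of $n$ (so that $e^{-d/r}(1+129\sqrt{\varepsilon_n})^{d\kappa} p(d)$ remains summable and small), everything converges.
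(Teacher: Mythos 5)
Your proposal is correct and follows essentially the same route as the paper: the same function $\sum_{d\le M}e^{-d/r}\phi_d\chi_d$, the same decomposition $\phi_d=\Psi+(\phi_d-\Psi)$ with the error controlled by $q(d,\varepsilon_n)$ and Mizuta's polynomial bound, and the same comparison of the main term to the positive definite kernel $e^{-|\cdot|_r/r}\Psi$. The only cosmetic difference is the order of quantifiers: the paper fixes the finite set and tolerance first and chooses the vertex-group tolerance $\mu$ \emph{last} (after $r$ and $M$), so the error sum is finite and no summability coupling between $r$ and $\varepsilon_n$ is ever needed — which slightly simplifies the bookkeeping you flag as the main obstacle.
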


\begin{proof}
	Suppose we are given a graph product $G(\Gamma)$ with weakly amenable vertex groups $\{G_v\}$,
	 a small $\varepsilon>0$, and a finite subset $S$ of $G(\Gamma)$. 
	 We show that there is a completely bounded function $\mathrm{F}$ such that $|1-\mathrm{F}(g)|\le \varepsilon$ for all $g\in S$ and $\cb{\mathrm{F}}\le 1+\varepsilon$.

	First we need to define some constants.
	Let $N = \max\{|g|_r :\; g \in S\}$.
	Choose $\delta$ such that, if you have $N+1$ numbers $\{a_i\}$ each of which is at most $\delta$ away from $1$, then $|1-\prod_1^{N+1}a_i| < \varepsilon$. 
	Choose $r$ and $M$ large enough such that $|1-e^{-N/r}|<\delta$ and $\sum_{d=M+1}^\infty e^{-d/r}p(d) < \frac{\varepsilon}{2}$.
	Let $\mu>0$ be small enough that  $q(M,\mu)\sum^M_{d=0}e^{-d/r}p(d)
	\le \frac{\varepsilon}{2}$.

	For each $v$, let $S_v := \{g_v \in G_v\; |\; g_v \text{ is a syllable of some }g \in S\}$ and,
	by the weak amenability of $G_v$, choose a finitely supported completely bounded function $\f_v$ on $G_v$ such that $\cb{\f_v}\le 1+\mu $ and $|1 - \f_v(g_v)| < \delta$ for all $g_v\in S_v$. 
	Using Proposition \ref{gpextension}, for each $d\in\N$, we can construct the completely bounded kernel $\phi_d$ on $G(\Gamma)$,
	and by Lemma \ref{cbpd} there exists an ambient positive definite kernel $\Psi$ on $G(\Gamma)$ such that $\cb{\phi_d-\Psi}<q(d,\mu)$.

	Our final function is then $\bF = \sum_{d=1}^M e^{-d/r}\phi_d\chi_d$.
	For any $g\equiv g_1\dots g_n \in S$, we have that 
	\begin{equation*}
		\begin{split}
			\mathrm{F}(g) = e^{-|g|_r/r}\phi_n(g,e) = e^{-|g|_r/r}\f_1(g_1)\f_2(g_2)\dots \f_n(g_n)
		\end{split}
	\end{equation*}
	and each of the at most $N+1$ terms in this product is within $\delta$ of $1$.
	Hence, by our definition of $\delta$,  $|1-\mathrm{F}(g)| < \varepsilon$.
	More generally, $F$ is non-zero only on words with at most $M$ syllables.
	On such a word, it is a product of $\f$ applied to each syllable. 
	As each $\f$ is finitely supported, $F$ is only non-zero on finitely many words.
	We also have that 
	\begin{equation*}
		\begin{split}
			\cb{\mathrm{F}} 
			&=\cb{\sum^M_{d=0}e^{-d/r}\phi_d\chi_d}
			\le \cb{\sum^M_{d=0}e^{-d/r}(\phi_d-\Psi)\chi_d}+\cb{\sum^M_{d=0}e^{-d/r}\Psi\chi_d}
			\\&\le \sum^M_{d=0}e^{-d/r}\cb{\phi_d-\Psi}\cb{\chi_d}+\sum^\infty_{d=M+1}e^{-d/r}\cb{\Psi}\cb{\chi_d} + \cb{\sum^\infty_{d=0}e^{-d/r}\Psi\chi_d}
			\\&\le \sum^M_{d=0}e^{-d/r}q(d,\mu)p(d)+\sum^\infty_{d=M+1}e^{-d/r}p(d) + \cb{e^{-|\cdot|_r/r}\Psi}
			\\&\le q(M,\mu)\sum^M_{d=0}e^{-d/r}p(d)
			+\frac{\varepsilon}{2} + 1 
			\\&\le 1 + \varepsilon.
		\end{split}
	\end{equation*}
	The third line follows from Lemma \ref{cbpd} and the fact that $\cb{\Psi}=1$.

	Given a net of finite subsets that exhausts $G(\Gamma)$ and numbers $\varepsilon_n$ which go to $0$,  we can construct a net of finitely supported completely bounded functions which approximate the identity of $G(\Gamma)$. 
	Further, the cb-norms go to one, hence $G(\Gamma)$ is weakly amenable with Cowling-Haagerup constant one.
\end{proof}

\bibliography{ear}{}
\bibliographystyle{plain}
\address{   Eric Reckwerdt\\
 	University of \Hawaii, \Manoa, \\
	Department of Mathematics, \\
	2565 McCarthy Mall, \\
	Honolulu, HI 96822-2273\\
      	USA	\\
      \email{ear@hawaii.edu}}

\end{document}